\DeclareSymbolFont{largesymbols}{OMX}{zplm}{m}{n} 
\let\originalleft\left     
\let\originalright\right
\renewcommand{\left}{\mathopen{}\mathclose\bgroup\originalleft}
\renewcommand{\right}{\aftergroup\egroup\originalright}
\newcolumntype{C}{>{$}c<{$}} 
\numberwithin{equation}{section}
\renewcommand{\Re}{\operatorname{Re}}
\renewcommand{\ge}{\geq}
\renewcommand{\le}{\leq}
\DeclarePairedDelimiter{\brac}{\lparen}{\rparen} 
\DeclarePairedDelimiter{\sqbrac}{\lbrack}{\rbrack} 
\DeclarePairedDelimiter{\set}{\lbrace}{\rbrace}
\newcommand{\st}{\mspace{5mu} : \mspace{5mu}} 
\DeclarePairedDelimiter{\abs}{\lvert}{\rvert}
\DeclarePairedDelimiter{\norm}{\lVert}{\rVert}
\DeclarePairedDelimiter{\ang}{\langle}{\rangle}
\DeclarePairedDelimiter{\powser}{\llbracket}{\rrbracket} 
\DeclarePairedDelimiterX{\comm}[2]{\lbrack}{\rbrack}{#1 , #2}  
\DeclarePairedDelimiterX{\acomm}[2]{\lbrace}{\rbrace}{#1 , #2} 
\DeclarePairedDelimiterX{\super}[2]{\lparen}{\rparen}{#1 \delimsize\vert \mathopen{} #2} 
\newcommand{\pair}[2]{\ang*{#1,#2}} 
\DeclareMathOperator{\id}{id}
\newcommand{\dd}{\mathrm{d}}   
\newcommand{\ii}{\mathfrak{i}} 
\newcommand{\ee}{\mathsf{e}}   
\newcommand{\wun}{\mathbf{1}}  
\DeclareMathOperator{\cspn}{span}
\newcommand{\spn}[1]{\cspn_{\CC}\set*{#1}}                    
\newcommand{\rspn}[1]{\cspn{_{\RR}\set*{#1}}}
\newcommand{\ra}{\rightarrow}
\newcommand{\isomto}{\stackrel{\cong}{\smash{\longrightarrow}\rule{0pt}{0.4ex}}}
\DeclareMathOperator{\ind}{Ind}
\newcommand{\Ind}[3]{\ind^{#1}_{#2} #3}
\DeclareMathOperator{\Hom}{Hom}
\newcommand{\Homgrp}[3]{\Hom_{#1}\brac*{#2,#3}}
\newcommand{\iHom}[2]{\underline{\Hom}\brac*{#1,#2}}
\DeclareMathOperator{\End}{End}
\newcommand{\Endgrp}[1]{\End{#1}}
\newcommand{\fld}[1]{\mathbb{#1}}    
\newcommand{\Mod}[1]{\mathcal{#1}}   
\newcommand{\VOA}[1]{\mathsf{#1}}    
\newcommand{\categ}[1]{\mathscr{#1}} 
\newcommand{\ZZ}{\fld{Z}}
\newcommand{\NN}{\fld{N}}
\newcommand{\RR}{\fld{R}}
\newcommand{\CC}{\fld{C}}
\newcommand{\UEA}[1]{\mathsf{U}\brac*{#1}}
\newcommand{\lhmd}[1]{\mathsf{HM}\brac*{#1}}
\newcommand{\hva}{\VOA{V}}
\newcommand{\hvoa}[1]{\hva\brac*{#1}}
\newcommand{\lvoa}[2]{\hva\brac*{#1,#2}}
\newcommand{\lvmd}[1]{\mathsf{VM}\brac*{#1}}
\newcommand{\Fock}[1]{\Mod{F}_{#1}}          
\newcommand{\LFock}[1]{\mathbb{F}_{#1}}      
\newcommand{\iFock}[1]{\mathbb{F}\sqbrac*{#1}} 
\DeclarePairedDelimiter{\ket}{\lvert}{\rangle}
\DeclarePairedDelimiterX{\braket}[2]{\langle}{\rangle}{#1 \delimsize\vert \mathopen{} #2}
\DeclarePairedDelimiterX{\bracket}[3]{\langle}{\rangle}{#1 \delimsize\vert \mathopen{} #2 \delimsize\vert \mathopen{} #3}
\DeclareMathOperator{\tr}{tr}
\newcommand{\fuse}{\mathbin{\boxtimes}}                                            
\DeclareMathOperator{\compspace}{COMP}
\newcommand{\comp}[2]{\compspace\brac*{#1,#2}}
\newcommand{\iop}[1]{\mathcal{Y}_{#1}}
\newcommand{\ifld}[3]{\iop{#1}\brac*{#2,#3}}
\DeclareMathOperator{\opp}{opp} 
\newcommand{\ityp}[3]{\binom{#3}{#1,#2}}
\newcommand{\ispc}[3]{\operatorname{I}\binom{#3}{#1,#2}}
\newcommand{\grispc}[3]{\operatorname{Gr}\binom{#3}{#1,#2}}
\newcommand{\tcycsymb}{\varepsilon}
\newcommand{\tcyc}[2]{\tcycsymb\brac*{#1,#2}}
\newcommand{\cft}{conformal field theory}
\newcommand{\cfts}{conformal field theories}
\newcommand{\va}{vertex algebra}
\newcommand{\voa}{vertex operator algebra}
\newcommand{\Voa}{Vertex operator algebra}
\newcommand{\ope}{operator product expansion}
\newcommand{\lhs}{left-hand side}
\newcommand{\rhs}{right-hand side}
\newcommand{\cfin}[1]{\(C_{#1}\)-cofinite}
\theoremstyle{plain}
\newtheorem{thm}{Theorem}[section]
\newtheorem{prop}[thm]{Proposition}
\newtheorem{lem}[thm]{Lemma}
\newtheorem{cor}[thm]{Corollary}
\newtheorem*{thm*}{Theorem}
\theoremstyle{definition} 
\newtheorem*{ex}{Example}
\newtheorem{defn}[thm]{Definition}
\newtheorem*{rmk}{Remark}
\newtheorem{prob}[thm]{Problem}
\Crefname{thm}{Theorem}{Theorems}
\Crefname{prop}{Proposition}{Propositions}
\Crefname{lem}{Lemma}{Lemmas}
\Crefname{cor}{Corollary}{Corollaries}
\Crefname{defn}{Definition}{Definitions}
\Crefname{tab}{Table}{Tables}
\newcommand{\module}[1]{#1} 
\newcommand{\Amod}{\module{A}}
\newcommand{\Bmod}{\module{B}}
\newcommand{\Cmod}{\module{C}}
\newcommand{\Mmod}{\module{M}}
\newcommand{\Nmod}{\module{N}}
\newcommand{\Pmod}{\module{P}}
\newcommand{\Qmod}{\module{Q}}
\newlength\squareheight
\newcommand\squareslash{\tikz{\draw (0,0) rectangle (\squareheight,\squareheight);\draw(0,0) -- (\squareheight,\squareheight)}}
\DeclareMathOperator\hlz{\squareslash}
\newcommand{\Vect}{\mathsf{Vect}}
\newcommand{\modules}{\text{-}\mathrm{Mod}}
\newcommand{\lat}{\Lambda}      
\newcommand{\dlat}{\lat^{\ast}} 
\newcommand{\qlat}{\dlat/\lat}  
\newcommand{\plat}{\lat^\perp}  
\newcommand{\nlat}{\lat^\circ}  
\newcommand{\ldat}{\Psi}      
\newcommand{\hlie}{\widehat{\mathfrak{h}}}
\newcommand{\hvec}{\mathfrak{h}}
\newcommand{\btf}{G^T}
\newcommand{\btfi}[2]{\btf_{#1}\brac*{#2}}
\newcommand{\tstr}{\varphi}
\newcommand{\Dcat}{\categ{D}}
\newcommand{\Ccat}{\categ{C}}
\newcommand{\cst}[1]{{#1}}
\newcommand{\sct}[1]{s\brac*{\cst{#1}}}
\newcommand{\ffv}{\xi}
\newcommand{\rffv}{\widetilde{\ffv}}
\newcommand{\gv}{Grothendieck-Verdier}
\newcommand{\rk}{\mathrm{rk}}
\newcommand{\rgv}{ribbon Grothendieck-Verdier}
\newcommand{\data}{bosonic lattice data}
\newcommand{\halg}{\Amod_{\mathsf{H}}}
\newcommand{\valg}{\Amod_{\mathsf{V}}}
\begin{document}

\title[Duality structures for representation categories of vertex operator algebras and the Feigin-Fuchs boson]
{Duality structures for representation categories of vertex operator algebras \\ and the Feigin-Fuchs boson}

\author[R Allen]{Robert Allen}

\address[Robert Allen]{
School of Mathematics \\
Cardiff University \\
Cardiff, United Kingdom, CF24 4AG.
}

\email{allenr7@cardiff.ac.uk}

\author[S Lentner]{Simon Lentner}
\address[Simon Lentner]{
  Algebra and Number Theory\\
  University Hamburg\\
  Bundesstraße 55\\
  D-20146 Hamburg
}
\email{simon.lentner@uni-hamburg.de}

\author[C Schweigert]{Christoph Schweigert}
\address[Christoph Schweigert]{
  Algebra and Number Theory\\
  University Hamburg\\
  Bundesstraße 55\\
  D-20146 Hamburg
}
\email{christoph.schweigert@uni-hamburg.de}

\author[S Wood]{Simon Wood}

\address[Simon Wood]{
  School of Mathematics \\
  Cardiff University \\
  Cardiff, United Kingdom, CF24 4AG.
}

\email{woodsi@cardiff.ac.uk}

\subjclass[2010]{Primary 17B69, 81T40; Secondary 17B10, 17B67, 05E05
  \newline
  Hamburger Beitr\"age zur Mathematik 904, ZMP-HH/21-16}

\begin{abstract}
Huang, Lepowsky and Zhang developed a representation theory for \voa{s}
that endows suitably chosen module categories with the structures of
braided monoidal categories. Included in the theory is a functor which
assigns to discretely strongly graded modules a
contragredient module, obtained as a gradewise dual. In this paper, we show that this
gradewise dual endows the representation category with the
structure of a \rgv{} category. This duality structure is more general
than that of a rigid monoidal category; in contrast to rigidity, it 
naturally accommodates
the fact that a \voa{} and its gradewise dual need not be
isomorphic as modules and that the tensor product of modules over \voa{s}
need not be exact.

We develop criteria which allow the detection of \rgv{} 
equivalences and use
them to explore  ribbon \gv{} structures in the example of
the rank \(n\) Heisenberg \voa{} or chiral free boson on a not necessarily full rank
even lattice with arbitrary choice of conformal vector. We show that these
categories are equivalent, as \rgv{} categories, to certain categories of graded vector 
spaces and categories of modules over a certain Hopf algebra.
\end{abstract}

\maketitle

\onehalfspacing

\section{Introduction}

\Voa{s} are algebraic structures with numerous applications in mathematical physics, representation theory,
geometry  and combinatorics. For any algebraic structure, it is important to first select a ``sensible'' category of representations and then to
understand the structure this representation category naturally
carries. In the case of \voa{s}, the consensus expectation is that a
sensible category of representations should admit
the structure of a braided tensor category. For a large class of choices of \voa{} 
module categories a good tensor product theory 
that includes a braiding has been found in
 by Huang, Lepowsky and Zhang  in the long series of papers \cite{HuaLog}.  In \cref{thm:sufHLZ} we record a list of sufficient
conditions, collated from  \cite{HuaLog}, for a module category to admit these
structures and specialise these in \cref{thm:simpsufconds} for easier
application to the categories considered in \cref{sec:voas}. Categories
satisfying these conditions include \(\NN\) gradable modules over \cfin{2}
\voa{s} (this covers all rational theories and also all logarithmic \cfin{2}
theories such as the \(\mathcal{W}_{p,q}\)
triplet models \cite{FeiWpq06}) as well as certain module categories with infinitely many
inequivalent simple modules such as Heisenberg or bosonic ghost module
categories to name but a few \cite{All20,AugBp19,CreuVir20,CreuSing20,CreuGL20}.

Contragredient representations appear for many algebraic structures. They
lead, in many cases where the representation category is a monoidal category, 
to the notion
of {\em rigidity}. Recall that an object 
is called rigid, if it has 
both left and right duals, each of which comes with evaluation and
coevaluation morphisms that satisfy the usual zig-zag relations.
A category
is called rigid, if every object is rigid. This is a property: any
left dual or right dual is unique up to unique isomorphism.

Rigidity as categorical formalisation of duality is widely used: it applies to the
category of finite-dimensional
vector spaces and to categories of finite-dimensional modules over
  finite-dimensional (weak quasi) Hopf algebras. Duals
are also widely used in quantum topology, since they lead to a powerful
graphical 
calculus which allows, for suitable tensor categories, for the construction of 
invariants of knots, links and manifolds.

On the other hand, the notion of rigidity has severe limitations. They already
become apparent when one considers Hopf algebroids: 
Hopf algebroids are interesting
algebraic structures with the desirable property that any 
finite tensor category can be realised as the
representation category of a finite-dimensional Hopf algebroid \cite[Theorem 7.6]{brlv}.
However, the natural duality structure for Hopf algebroids is {\em not} rigidity, see
\cite{daSt3}, where it was shown that the natural duality structure is
that of a \gv{} category.  

A \emph{\gv{} category} is a monoidal category $\Ccat$, with a distinguished
object $K$, called the dualising object, such that for any pair of objects
$X,Y\in\Ccat$, there are natural isomorphisms
\begin{equation}
  \Homgrp{}{-\otimes Y}{K} \isomto \Homgrp{}{-}{DY},
\end{equation}
where $D$ is a contravariant equivalence of categories.  In the context
  of \voa{} module categories
the dualising object $K$ should be thought of  as the gradewise dual of the \voa{}, seen as a module over
itself, and  $D$ as the functor which assigns to any object its gradewise dual
and to any morphism its transpose.
This seemingly simple definition of a \gv{} category 
has important consequences, for example, 
it guarantees the existence
of internal Homs for all objects $X,Z\in \Ccat$, by providing the explicit formula
\begin{equation}
  \iHom{X}{Z}\cong D(X\otimes D^{-1} Z),
  \label{eq:ihomfor}
\end{equation}
and implies that the tensor product 
of $\Ccat$ is right exact, if the category is abelian. 
Intriguingly, every \gv{} category is also endowed with a second
tensor product	$X\bullet Y= D^{-1} (DY\otimes DX)$ which turns out to
be left exact \cite{Boy12,ManGV17}, again, if the category is abelian. The two tensor products $\bullet$ and $\otimes$ should be
considered on an equal footing. It remains to be discovered what the full
implication of these two tensor products is for \voa{s} and \cfts{}. 
Rigid categories are examples of \gv{} categories, where the tensor unit is a
dualising object, though the tensor unit being a dualising object does not
imply that the category is rigid in general.

The notion of a \gv{} category (no rigidity assumed)
is nicely compatible with additional structure on the category $\Ccat$,
for example, 
a braiding, and it is possible to introduce notions of a balancing and a
twist. It is thus not surprising that this structure has surfaced in numerous
disparate places:
\gv\ categories are also known as  $*$-autonomous categories \cite{Bar79}, however,
in this paper, we use the more recent terminology 
of \gv{} categories \cite{Boy12,ManGV17,Mul20}.
The main insight of this paper is that the notion of a \rgv{} category
is the natural duality structure 
on tensor categories of modules of \voa{s} to which the HLZ-theory of tensor
products applies.
This is a very welcome insight. For example, the tensor product of a rigid abelian 
tensor category is necessarily exact, however, this can, in general, not be
expected to be true for representation categories of \voa{s}.
Indeed, the \(\mathcal{W}_{2,3}\) triplet model provides
just such a counter example \cite{GabW2309}. 

It should be emphasised that the structure of a \gv{} category naturally
appears in many fields of mathematics. \gv{} structures are linked to the
appearance of dualising sheaves {see, for example, \cite{neeman10} for a recent
discussion of dualising sheaves; further cyclic algebras over
the framed little disc operad with values in the bicategory of finite linear
categories are \rgv{} categories \cite{Mul20};
\gv{} structures are referred to
as the (categorical semantics) of the multiplicative fragment of linear logic
(MLL) \cite{Mel09}; and
\gv{} categories have also been used to describe categorical structures on categories
of topological vector spaces \cite[Appendix]{Bar79}. 

\smallskip

The purpose of this paper is three-fold: first, we show that categories of
representations of \voa{s} to which the HLZ theory of tensor products
applies are naturally \rgv{} categories. Second, we provide tools to compare
categories of different algebraic origin as \gv\ categories. Finally, we provide
first simple, yet instructive applications of these general results by considering
\voa{s} based on Feigin-Fuchs bosons. These are also interesting
building blocks for the description of more general classes of \voa{s}
of recent interest, for example, of ghost systems \cite{AdaBG19,All20} or the
triplet models \cite{FeiWpq06} and their higher rank generalisations \cite{FeiLog10}.

Let us comment on the importance of these results: in the HLZ theory of tensor
products, an 
important role is played by the contragredient dual, that is, a grade-wise dual.
It is known that this dual does not, in general, provide the structure of
rigidity on the representation category. Moreover, examples show that 
it is {\em not} natural to require the contragredient dual of the \voa{},
that is, of the monoidal unit of the tensor category, to be isomorphic to the
monoidal unit. Indeed, in the \gv\ structure, the contragredient dual
of the monoidal unit has an important independent conceptual
role as a dualising object.  To the best of the authors' knowledge, this is
the first paper explicitly observing that \voa{} module categories admit \gv{} structures,
however, consequences of \gv{} duality structures for \voa{}
  module categories have been observed in the past.
For example, in
  \cite[Display (3.19) and Theorem 3.10]{GabW2309} it was noted that internal homs for the \(c=0\) triplet
  algebra satisfy the formula \eqref{eq:ihomfor} above. Further, in
  \cite[second paragraph above Main Theorem 1 and end of
  Section 5.1]{CreuGlu19} it
  was noted that, if the \voa{} is self-contragredient as a module over itself
  (hence the \voa{} is a dualising object),
  then internal homs exist. Consequences of the \gv{} structure arising when
    the \voa{} is self-contragredient were also crucial to recent results \cite{McrC221}
    relating \cfin{2}ness and rigidity.
It is most gratifying to see  \gv{} structure explain and generalise such 
  phenomena and that the deep and general HLZ theory of tensor product finds
  its natural categorical counterpart in general \rgv{} categories.

We expect that our results will enable much future research. \Voa{s}
are notoriously intricate algebraic structures. Thus for many constructions, 
in particular, the construction of full local conformal field theories from chiral conformal field
theories, it is therefore desirable to work, as far as possible, in terms of the
appropriate categorical structures. The structure of a  \rgv{} category is
rich enough to give us confidence that such a theory can be developed.

We now summarise the main results and how
this paper is structured. In \cref{sec:cattools} we give an
  overview of the categorical notions required for this paper including \rgv{}
  structures and HLZ tensor product theory. The two main general
results are:
\begin{itemize}
	\item In \cref{thm:gvstr}, we state precise conditions which ensure
	that a representation category $\Ccat$ of a \voa{} $V$ is a 
        \rgv{} category
	with dualising object $V'$ the contragredient of the \va\ as a module over itself
	and the functor of taking contragredients as the dualising functor.
	\item
	In \cref{thm:btcvoa} and the subsequent \cref{thm:tstrcor},
	we establish explicit ways to set up equivalences of \rgv{} categories.
\end{itemize}

In \cref{sec:freebosons} we then turn to chiral free bosons which are partially compactified, on a
lattice (which can have non-maximal rank) and a non-degenerate bilinear form of indefinite
signature. For the conformal structure, we admit the possibility of Feigin-Fuchs bosons, that is, we consider conformal vectors of the form
\begin{equation}
  \omega_{\gamma} = \frac{1}{2} \sum_{i} \alpha^i_{-1} \alpha^{i*}_{-1}\ket{0} +
  \gamma_{-2}\ket{0},\quad \gamma\in \hvec_\CC,
\end{equation}
where the first summand is the standard Sugawara formula for a conformal
vector and the second is a deformation by a derivative of (a linear
combination of) any of the conformal weight 1 generating free boson fields.
This leads to the following set of input data which we collect in the form of 
\emph{\data{}},
see \cref{def:boslatt}: a finite dimensional real vector space
$\hvec$ with a non-degenerate symmetric real-valued bilinear form 
$\pair{-}{-}$ , an even lattice $\lat \subset \hvec$ and a
distinguished element \(\ffv \in \qlat\) (where \(\dlat\) is the subgroup of
\(\hvec\) that pairs integrally with \(\lat\)) which describes the Feigin-Fuchs
structure of the boson. From these data, we construct three
different algebraic structures:
\begin{itemize}
\item A \rgv{} category of graded vector spaces, extending a classical
  construction of Eilenberg-MacLane 	\cite{Mac52} and Joyal-Street 
  \cite{Joy93} for braided categories,
  see \cref{thm:vecgv}.
\item A lattice \voa{} built from Heisenberg \va{s} and a category
  of lattice \voa{} modules to which the HLZ tensor product theory applies
  so that is a ribbon \gv{} category.
\item A Hopf algebra that is possibly infinite dimensional, together with 
  a representation category that is a \rgv{} category.
\end{itemize}
	
Then the two \cref{thm:voaequiv,thm:hpfgvstr} assert that,
given a set of bosonic lattice data, these three categories are all equivalent as
\rgv{} categories.  The associativity and braiding structures of these
categories all do not depend on the distinguished element \(\ffv \in
\qlat\). The role of this element is to determine the dualising object and
twist of the \rgv{} structure, and in the case of the \voa{} module category
the conformal structure.

In \cref{sec:chars}, the final section, we explore the characters of simple
lattice \voa{} modules and explore their transformation
under the modular group $\mathrm{SL}(2,\ZZ)$, and compute the
generalisation of the Verlinde formula conjectured by the standard module
formalism
\cite{RidSL208,CreMod12,CreW1p12,CreWZW13,CreLogInt13,RidVer14,CreWpq17,RidBos14}.
We observe that this conjectured Verlinde formula correctly
predicts the multiplicities of simple modules in the tensor product, which
do not depend on the distinguished element \(\ffv\). In contradistinction,
the formulae of the modular $S$ and \(T\) matrices show shifts 
that take into account the non-trivial \gv{} structure of the Feigin-Fuchs boson.
It remains an interesting problem to explain these shifts in terms of a systematic theory
of traces for pivotal \gv{} categories.

\section{The categorical framework}
\label{sec:cattools}

In this section we review the notion of a \rgv{} category. This notion includes rigid ribbon categories, but is more general.
We show that this notion is the natural duality structure on tensor categories
of modules of \voa{s}.  We assume a basic familiarity with tensor
categories and 
with \voa{s}, referring readers unfamiliar with these to \cite{EtiTen15} or
\cite{DoLGVA93}, respectively. The notion of a \rgv{}
category and the relevant aspects of the HLZ-theory of tensor products
will be reviewed.

\subsection{Grothendieck-Verdier categories}
\label{sec:GVcats}

Duality, in particular, in the form a rigidity, plays an important role in quantum
topology, a subject intimately linked to vertex algebras and their representation
categories.  The tensor product of a rigid abelian tensor category
is necessarily exact. This can, in general, not be
expected to be true for representation categories of vertex algebras, which are
monoidal categories. Indeed, the \(\mathcal{W}_{2,3}\) triplet model provides
just such a counter example \cite{GabW2309}. Rigidity is a property;
it is actually a special case of a more general duality {\em  structure}. Categories with
such a structure are called $*$-autonomous categories \cite{Bar79} or, more recently,
Grothendieck-Verdier categories\cite{Boy12,ManGV17,Mul20}. In this paper, we will 
see that this  duality structure is 
indeed very naturally realised on \voa{} module categories 
to which the HLZ-tensor product theory applies. In fact, since these categories
are naturally braided and have a canonical identification of the bidual with the
original module, they admit a pivotal structure. This pivotal structure is
equivalent to the existence of a ribbon structure (which has a prominent
manifestation in the context of \voa{s}). So we are naturally lead to study these module categories as \rgv{} categories.

\begin{defn}
  A \emph{\gv{} category} is a monoidal category $\Ccat$, together with
  a distinguished object $K \in \Ccat$, \emph{called the dualising object}
  satisfying the following conditions.
  \begin{enumerate}
  \item For any object \(Y\in \Ccat\), the contravariant functor
    \(\Homgrp{}{-\otimes Y}{K}\) is representable, that is, there exists an
    object \(DY\in \Ccat\) such that there is a natural isomorphism
    \begin{equation}
      \Homgrp{}{-\otimes Y}{K} \isomto \Homgrp{}{-}{DY}.
      \label{eq:GVprop}
    \end{equation}
    By Yoneda's Lemma there therefore exists a unique (up to natural
    isomorphism) contravariant functor \(D\), called \emph{the dualising functor},
    which assigns to every \(Y\in\Ccat\) the representing object \(DY\), that
    is \(D(Y)=DY\).
  \item The contravariant functor \(D\) above is an anti-equivalence.
  \end{enumerate}
  If in addition the category \(\Ccat\) is braided, then it is called a
  \emph{braided \gv{} category}.
\end{defn}

\begin{rmk}
  An immediate consequence of the above definition of \gv{} categories is the
  existence of a
  natural isomorphism in two variables
    \begin{equation}
      \Homgrp{}{-_1\otimes -_2}{K} \isomto \Homgrp{}{-_1}{D(-_2)},
    \end{equation}
    where the subscripts indicate that the ordering of the variables is preserved,
     of the contravariant functors \(\Homgrp{}{-\otimes -}{K}\) and
     \(\Homgrp{}{-}{D(-)}\).
\end{rmk}

Note that the choice of a dualising object $K$ is structure, as there can be many
inequivalent choices.

\begin{prop}[Boyarchenko-Drinfeld {\cite[Proposition 1.3]{Boy12}}]
  Let $\Ccat$ be a \gv{} category with dualising object \(K\) and
  corresponding dualising functor \(D\).
  \begin{enumerate}
  \item For any invertible object \(U\), the objects \(D(U)\cong K\otimes
    U^{-1}\) and \(D^{-1}U\cong U^{-1}\otimes K\) are again dualising objects
    in \(\Ccat\).
  \item The functors $U\mapsto D(U)=K\otimes U^{-1}$ and $U\mapsto
    D^{-1}U\cong U^{-1}\otimes K$ are anti-equivalences
    between the full subcategory of invertible objects $U\in\Ccat$ and the full
    subcategory of dualising objects.
    \label{itm:invtwist}
  \item If $U\in\Ccat$ is invertible then so is $D^2 U$ and one has a canonical isomorphism
    $K\otimes U^{-1}\cong (D^2U)^{-1}\otimes K$.
  \end{enumerate}
  \label{thm:invdual}
\end{prop}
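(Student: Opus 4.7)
The plan is to prove the three statements in turn by Yoneda's lemma and direct manipulation of the two adjunctions attached to the dualising structure: the defining adjunction \(\Homgrp{}{-\otimes Y}{K} \cong \Homgrp{}{-}{DY}\), and its companion \(\Homgrp{}{Y\otimes -}{K} \cong \Homgrp{}{-}{D^{-1}Y}\) characterising \(D^{-1}\) as the quasi-inverse of \(D\). For any invertible \(U\), the functors \(-\otimes U\) and \(U\otimes -\) are autoequivalences of \(\Ccat\) with quasi-inverses \(-\otimes U^{-1}\) and \(U^{-1}\otimes -\) respectively, and this is the main tool throughout.

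For part~(1), the formula \(DU \cong K\otimes U^{-1}\) follows from the chain
\begin{equation*}
\Homgrp{}{X\otimes U}{K} \cong \Homgrp{}{X}{K\otimes U^{-1}},
\end{equation*}
obtained by tensoring both arguments with \(U^{-1}\), combined with the defining adjunction and Yoneda's lemma. The symmetric chain together with the companion adjunction gives \(D^{-1}U \cong U^{-1}\otimes K\). To see that \(K\otimes U^{-1}\) is itself a dualising object, I would display its dualising functor as \(D\circ(-\otimes U)\), a composite of a covariant equivalence with a contravariant one, hence itself a contravariant equivalence; the required natural isomorphism is
\begin{equation*}
\Homgrp{}{X\otimes Y}{K\otimes U^{-1}} \cong \Homgrp{}{X\otimes Y\otimes U}{K} \cong \Homgrp{}{X}{D(Y\otimes U)}.
\end{equation*}
The argument for \(U^{-1}\otimes K\) is symmetric.

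For part~(2), I would verify full faithfulness and essential surjectivity of the contravariant functor \(F\colon U\mapsto K\otimes U^{-1}\) between the full subcategories of invertible and dualising objects (the argument for \(U\mapsto U^{-1}\otimes K\) is symmetric). Full faithfulness is a Yoneda-style chain using invertibility and the adjunctions to arrive at \(\Homgrp{}{K\otimes V^{-1}}{K\otimes U^{-1}} \cong \Homgrp{}{U}{V}\). Essential surjectivity is the key nontrivial step: given a dualising object \(K'\) with dualising functor \(D'\), the candidate invertible is \(U := D^{-1}(K')\); one shows that it is invertible by constructing an inverse via \((D')^{-1}(K)\) and verifies \(K'\cong K\otimes U^{-1}\) directly from the defining properties.

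Part~(3) follows quickly from parts~(1) and~(2). By part~(1), \(DU \cong K\otimes U^{-1}\) is a dualising object; by essential surjectivity from part~(2), there is a (unique up to isomorphism) invertible \(V\) with \(V^{-1}\otimes K \cong K\otimes U^{-1}\); and the second formula of part~(1) identifies \(V^{-1}\otimes K \cong D^{-1}V\), so \(D^{-1}V \cong DU\) and hence \(V\cong D^{2}U\). This simultaneously establishes that \(D^{2}U\) is invertible and that it satisfies the asserted canonical isomorphism \(K\otimes U^{-1}\cong(D^{2}U)^{-1}\otimes K\). The main obstacle in the entire proof is the essential surjectivity step in part~(2); the other claims reduce to routine Yoneda manipulations or to formal consequences of parts~(1) and~(2).
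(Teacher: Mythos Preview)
The paper does not supply its own proof of this proposition; it is quoted verbatim from Boyarchenko--Drinfeld \cite[Proposition~1.3]{Boy12} and used as a black box. There is therefore no in-paper argument to compare against.

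That said, your sketch is sound. Part~(1) is exactly the Yoneda manipulation you describe. For part~(2), your outline of essential surjectivity is correct but hides the only real work: to show that \(U:=D^{-1}(K')\) is invertible, one observes that \(\Homgrp{}{X}{D'Y}\cong\Homgrp{}{X\otimes Y}{K'}\cong\Homgrp{}{(X\otimes Y)\otimes U}{K}\cong\Homgrp{}{X}{D(Y\otimes U)}\), whence \(D'\cong D\circ(-\otimes U)\) and so \(-\otimes U\cong D^{-1}\circ D'\) is an autoequivalence; the symmetric argument with \(W:=(D')^{-1}(K)\) shows \(-\otimes W\) is its quasi-inverse, and evaluating both composites at \(\wun\) gives \(U\otimes W\cong\wun\cong W\otimes U\). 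Your identification of this as the crux is right, but the argument you gesture at (``constructing an inverse via \((D')^{-1}(K)\)'') needs this autoequivalence step made explicit. Part~(3) follows from parts~(1) and~(2) exactly as you say.
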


\begin{prop}
  Let $\Ccat$ be a \gv{} category with dualising object \(K_\Ccat\) and let $\Dcat$
  be a monoidal category. For any monoidal equivalence $F:
  \Ccat \to \Dcat$, the object
  $F(K_\Ccat)$ is a dualising object for $\Dcat$. Thus $\Dcat$ admits a \gv{} structure.
  In particular, if $\Dcat$ has already been endowed with a dualising object
  $K_\Dcat$, then $F(K_\Ccat)$ and $K_\Dcat$ differ by tensoring with 
  an invertible object.
  \label{prop:GVequiv}
\end{prop}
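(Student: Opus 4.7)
The plan is to transport the dualising object and functor from $\Ccat$ to $\Dcat$ along $F$, and then invoke \cref{thm:invdual} for the uniqueness statement.

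First, I choose a monoidal quasi-inverse $G\colon \Dcat \to \Ccat$ of $F$, so that $GF\cong \id_\Ccat$ and $FG \cong \id_\Dcat$ as monoidal functors. I propose $F(K_\Ccat)$ as the candidate dualising object in $\Dcat$ and define the candidate dualising functor by $D_\Dcat := F\circ D_\Ccat \circ G$. To verify the representability condition \eqref{eq:GVprop} for $\Dcat$, I compute, for arbitrary $X,Y\in\Dcat$, the chain of natural isomorphisms
\begin{align*}
  \Homgrp{\Dcat}{X\otimes Y}{F(K_\Ccat)}
    &\cong \Homgrp{\Ccat}{G(X\otimes Y)}{GF(K_\Ccat)}
    \cong \Homgrp{\Ccat}{GX\otimes GY}{K_\Ccat} \\
    &\cong \Homgrp{\Ccat}{GX}{D_\Ccat(GY)}
    \cong \Homgrp{\Dcat}{FGX}{FD_\Ccat GY}
    \cong \Homgrp{\Dcat}{X}{D_\Dcat Y},
\end{align*}
where the first and fourth isomorphisms use that $F,G$ are equivalences, the second uses the monoidal structure of $G$ together with $GF\cong\id_\Ccat$, the third is the defining isomorphism \eqref{eq:GVprop} for $\Ccat$, and the final step is by the definition of $D_\Dcat$ and $FG\cong \id_\Dcat$. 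Each of these isomorphisms is natural in $X$ and $Y$.

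Second, I check that $D_\Dcat$ is a contravariant anti-equivalence: it is the composite $F\circ D_\Ccat\circ G$ of two equivalences and one anti-equivalence (by the second clause of the \gv{} axioms applied to $\Ccat$), hence itself an anti-equivalence. This establishes that $F(K_\Ccat)$ is a dualising object of $\Dcat$, so $\Dcat$ carries a \gv{} structure.

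Finally, for the ``in particular'' statement: suppose $\Dcat$ already has a dualising object $K_\Dcat$. Then $\Dcat$ possesses two dualising objects, $K_\Dcat$ and $F(K_\Ccat)$. By \cref{thm:invdual}\,(\ref{itm:invtwist}), the assignment $U\mapsto K_\Dcat\otimes U^{-1}$ is an anti-equivalence between the full subcategory of invertible objects and the full subcategory of dualising objects of $\Dcat$; in particular it is essentially surjective onto the latter. Hence there exists an invertible object $U\in\Dcat$ together with an isomorphism $F(K_\Ccat)\cong K_\Dcat\otimes U^{-1}$, which is precisely the statement that $F(K_\Ccat)$ and $K_\Dcat$ differ by tensoring with an invertible object.

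There is no serious obstacle here: the proof is a transport-of-structure argument, and the only point requiring any care is to ensure that the monoidal isomorphism $G(X\otimes Y)\cong GX\otimes GY$ is used coherently in the second step, which is automatic once $G$ is fixed as a monoidal quasi-inverse.
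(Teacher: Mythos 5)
Your proposal is correct and takes essentially the same route as the paper: transport the defining representability isomorphism along a (monoidal) quasi-inverse and identify the dualising functor as $F\circ D_\Ccat\circ F^{-1}$, then invoke \cref{thm:invdual}.\ref{itm:invtwist} for the ``in particular'' clause. The only cosmetic difference is that you apply the quasi-inverse to the $\Hom$-groups directly while the paper substitutes $X\cong FF^{-1}X$ and $Y\cong FF^{-1}Y$ first, and you additionally spell out (harmlessly, and correctly) that the composite is an anti-equivalence.
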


\begin{proof}
  Let \(F^{-1}\) be a quasi-inverse of the equivalence \(F\). For $X,Y\in \Dcat$, consider
  \begin{align}
    \Hom_\Dcat(X\otimes Y, FK_\Ccat) &\isomto\Hom_\Dcat(FF^{-1}X\otimes FF^{-1}Y,FK_\Ccat)
                                         \isomto \Hom_\Dcat(F(F^{-1}X\otimes F^{-1}Y),FK_\Ccat) \nonumber\\
                                       &\isomto \Hom_\Ccat(F^{-1}X\otimes F^{-1}Y, K_\Ccat) \isomto
                                         \Hom_\Ccat(F^{-1}X, D_\Ccat F^{-1}Y) 
                                       \isomto \Hom_\Dcat (X,FD_\Ccat F^{-1} Y),
  \end{align}
  where the second bijection follows from the monoidal structure on \(F\) and
  the forth uses the definition property of the dualising object \(K_\Ccat\).
  This implies that \(FK_\Ccat\) is a dualising object in \(\Dcat\) with
  corresponding dualising functor  $F \circ D_\Ccat\circ F^{-1}$. Finally, if
  \(\Dcat\) was already endowed with a dualising object \(K_\Dcat\), then
  \(FK_\Ccat\) and \(K_\Dcat\) differing by tensoring with an invertible object
  is an immediate consequence of \cref{thm:invdual}.\ref{itm:invtwist}.
\end{proof}

\cref{prop:GVequiv}
shows that monoidal equivalences transport dualising
objects, thus allowing their comparison. In particular, in the notation of
this proposition the pair of \gv{}
categories \(\Ccat,\Dcat\) have equivalent \gv{} structures if and only if
\(FK_\Ccat\cong K_\Dcat\).

Braided \gv{} categories can admit ribbon structures that are compatible with the
\gv{} structure. To make this precise we recall the definition of a twist on a
braided monoidal category. Let \(\Ccat\) be a braided monoidal category with
braiding \(c\), then 
the identity functor with monoidal structure given by the
double braiding
\(J_\Ccat=(\id_{\Ccat},\id_\wun,c \circ c)\) is a braided
monoidal auto-equivalence called the \emph{Joyal-Street equivalence}. A twist on a
braided monoidal category is a monoidal isomorphism \(\theta:\id_\Ccat\to
J_\Ccat\). Explicitly, this means that the twist \(\theta\) obeys
\begin{equation}
  \theta_\wun=\id_\wun,\quad\text{and}\quad
  \theta_{X\otimes Y}=c_{Y,X}\circ c_{X,Y}\circ\brac*{\theta_X\otimes
    \theta_Y}.
  \label{eq:twistbalancing}
\end{equation}
If $\theta$ is a twist on a Grothendieck-Verdier category $\Ccat$, then
$$ \theta^D_X= D^{-1}(\theta_{DX})      $$
is also a twist on $\Ccat$. By \cite[Proposition 7.3]{Boy12}, this is an involution on the 
set of twists. The fixed points under this involution are relevant for 
representation categories of conformal \voa{s}.

\begin{defn}
  A \emph{\rgv{} category} is a braided category \(\Ccat\) with a twist \(\theta\) such that
  \begin{equation}
    D(\theta_X)=\theta_{DX},\quad \forall X\in\Ccat.
    \label{eq:dualisedtwist}
  \end{equation}
  \label{def:ribbonGV}
\end{defn}

Combining all of the notions above, we are lead to the following natural definition of
an equivalence of \rgv{} categories.

\begin{defn}
  Let \(\Ccat\) and \(\Dcat\) be \rgv{} categories with respective dualising
  objects \(K_\Ccat\) and \(K_\Dcat\), 
  and twists \(\theta_\Ccat\) and
  \(\theta_\Dcat\). A \emph{\rgv{} equivalence} is a monoidal equivalence
  satisfying
  \begin{itemize}
  \item equivalence of dualising objects: \(FK_\Ccat\cong K_\Dcat\),
  \item equivalence of twists: \(F(\theta_\Ccat)=\theta_\Dcat\).
  \end{itemize}
  \label{def:rgveq}
\end{defn}

\subsection{Huang-Lepowsky-Zhang tensor categories}

The complete reference for tensor structures arising from \voa{s} and
intertwining operators is the series of papers
\cite{HuaLog} by Huang, Lepowsky and
Zhang. Due to the series admirably operating in great generality, while also
providing many technical details, it can be perceived as intimidatingly
long. There are therefore a number of articles the literature, which include helpful reviews 
highlighting different aspects of the series relevant to different types of
\voa{s} and choices of module category \cite{Cre17,KanRid18,CreuDLim20}. Here
we give our own overview with an emphasis on the results necessary for later
sections. There will be three types of grading appearing
  below, whose relative importance might not be immediately clear for readers
  unfamiliar with the theory. There is the conformal grading by generalised
  eigenvalues of the Virasoro \(L_0\) operator and an additional grading by two abelian
  groups \(A\le B\), with \(A\) grading the \voa{} and \(B\) its
  modules. The latter two gradings have an analogy in the
  setting of a
  simple finite dimensional Lie algebra, where \(A\) is the root lattice
  (which grades the Lie algebra) and
  \(B\) is the dual of the Cartan subalgebra (which grades general weight modules).

\begin{defn}
  Let \(V\) be a \voa{} and \(B\) an abelian group with subgroup \(A\).
  \begin{itemize}
  \item The \voa{} \(V\) is said to be \(A\)-graded, if \(V\) admits a decomposition
    into homogenous spaces \(V^{(\gamma)},\ \gamma\in A\) such that
    \begin{enumerate}
    \item \(V=\bigoplus_{\gamma\in A} V^{(\gamma)}\).
    \item For any \(\alpha,\beta\in A\) and any \(v\in V^{(\alpha)}\)
      \begin{equation}
        Y(v,z)V^{(\beta)}\subset V^{(\alpha+\beta)}\powser*{z,z^{-1}}.
      \end{equation}
    \end{enumerate}
  \item A \emph{weak \(V\)-module} is a vector space
    \(M\) together with a
    field map
    \begin{align*}
      Y_M:V&\to \brac*{\Endgrp{M}}\powser*{z,z^{-1}}\nonumber\\
      v&\mapsto Y_M(v,z)=\sum_{n\in\ZZ}v_n z^{-n-1}
    \end{align*}
    satisfying
    \begin{enumerate}
    \item \emph{Lower truncation}: For all \(v\in V\) and \(m\in M\), \(v_n
      m=0\) for sufficiently large \(n\in \ZZ\).
    \item \emph{Vacuum property}: \(Y_M(\wun,z)=\id_M\), where \(\wun\in V\)
      is the vacuum vector.
    \item \emph{\(L_{-1}\) derivation}: For any \(v\in V\)
      \begin{equation}
        Y_M(L_{-1}v,z)=\frac{\dd}{\dd z}Y_M(v,z) .
      \end{equation}
    \item \emph{Jacobi identity}: For any \(u,v\in V\),
      \begin{equation}
        z_0^{-1}\delta\brac*{\frac{z_1-z_2}{z_0}}Y_M(u,z_1)Y_M(v,z_2)=z_0^{-1}\delta\brac*{\frac{-z_2+z_1}{z_0}}Y_M(v,z_2)Y_M(u,z_1)
        +z_2^{-1}\delta\brac*{\frac{z_1-z_0}{z_2}}Y_M(Y(u,z_0)v,z_2),
      \end{equation}
      where \(\delta\) denotes the algebraic
      delta distribution, that is the formal power series
    \begin{equation}
      \delta
      \brac*{\frac{z_2-z_1}{z_0}} = \sum_{\substack{r \in \ZZ \\s \ge 0}}
      \binom{r}{s} (-1)^s z_1^s z_2^{r-s} z_0^{-r}.
    \end{equation}
  \end{enumerate}
  If in addition there is a \(B\)-grading on the weak module
  \(M=\bigoplus_{\beta\in B}M^{(\beta)}\), then \(M\) is a \emph{\(B\)-graded weak 
    \(V\)-module}, if the following condition is satisfied.
  \begin{enumerate}[resume]
\item \emph{Grading compatibility}: For all \(\alpha\in A\), \(v\in
      V^{(\alpha)}\), \(\beta\in B\),
      \begin{equation}
        Y_M(v,z) M^{(\beta)}\subset M^{(\alpha+\beta)}\powser*{z,z^{-1}} .
      \end{equation}
    \end{enumerate}
    \item A \emph{\(B\)-graded generalised \(V\)-module} \(M\) is a 
      \(B\)-graded weak \(V\)-module that is graded by generalised \(L_0\)
      eigenvalues, that is, \(M=\bigoplus_{h\in \CC, \beta\in B}M_h^{(\beta)}\), where
      \begin{equation}
        M_h^{(\beta)}=\set*{m\in M^{(\beta)} \st \exists n\in \NN, \ (L_0-h)^n m=0}.
      \end{equation}
      The elements of \(M_h^{(\beta)}\) are called \emph{doubly homogeneous vectors}.
      Note that \emph{\(B\)-graded generalised \(V\)-modules} together with
      module homomorphisms form an abelian
      category.
    \item  A \emph{\(B\)-graded generalised \(V\)-module} \(M\) is called \emph{lower bounded} 
      if for each \(\beta\in B\), \(M_{h}^{(\beta)}=0\) for
      \(\Re h\) sufficiently negative.
    \item A \emph{strongly \(B\)-graded generalised \(V\)-module \(M\)} is a
      \(B\)-graded generalised \(V\)-module whose simultaneous homogeneous
      spaces \(M^{(\beta)}_h\) are all finite dimensional and for fixed \(h\in
      \CC\) and \(\beta\in B\), \(M^{(\beta)}_{h+k}=0\) for sufficiently negative \(k\in\ZZ\). Such a module is
      called \emph{discretely strongly graded} if all non-zero homogeneous spaces
      have real conformal weight and for any \(h\in \RR\), \(\beta\in B\), the
      space
      \(\bigoplus_{\tilde h\in \RR, \tilde h\le h} M^{(\beta)}_{h}\) is finite
      dimensional.
    \item A strongly \(B\)-graded generalised \(V\)-module \(M\) is called
      \emph{graded \cfin{1}} if for any \(\beta\in B\) the space
      \begin{equation}
        C_1\brac*{M}^{(\beta)}=\spn{v_{-1}m\in M^{(\beta)}\st v\in V_h, h>0,
          m\in M}
      \end{equation}
      has finite codimension in \(M^{(\beta)}\).
    \end{itemize}
    \label{def:module}
  \end{defn}

\begin{rmk}
  We abbreviate \(B\)-graded generalised \(V\)-module as \(B\)-graded
  \(V\)-module, or when the abelian group \(B\) is obvious from context as
  \(V\)-module. For the specific \voa{s} to be considered below, we will
  mainly be interested in discretely strongly graded modules which are in
  addition  graded \cfin{1} with respect to a suitable choice of
  vertex operator subalgebra.
\end{rmk}

\begin{prop}[Huang-Lepowsky-Zhang {\cite[Part I, Theorem 2.34]{HuaLog}}]
  Let \(A\leq B\) be abelian groups, \(V\) an \(A\)-graded \voa{}, let
  \(M\) be a \(B\)-graded weak \(V\)-module and define the vector spaces
  \begin{equation}
    M^\prime = \bigoplus_{b\in B,h\in\CC} \brac*{M_h^{(\beta)}}^\ast,\qquad \brac*{M_h^{(\beta)}}^\ast=\Homgrp{\CC}{M_h^{(\beta)}}{\CC}.
  \end{equation}
  If \(M\) is strongly \(B\)-graded, then the canonical linear isomorphisms
  identifying a finite dimensional vector space with its double dual extends to a canonical linear isomorphism
  \(M\cong M''\) of bigraded vector spaces.
  If, in addition, \(M\) is discretely strongly \(B\)-graded, then \(M'\)
  is also a discretely strongly \(B\)-graded with field map \(Y_{M'}\)
  uniquely characterised by
  \begin{equation}
    \pair{Y_{M'}(v,z)\phi}{m}=\pair{\phi}{Y^{\opp}_{M}(v,z)m},\qquad v\in V,\
    \phi\in M',m\in M,
  \end{equation}
  where \(Y^{\opp}_{M}\) is the \emph{opposed field map}
  \begin{equation}
    Y^{\opp}_{M}(v,z)=Y_M\brac*{\ee^{z L_1}\brac*{-z^{-2}}^{L_0}v,z^{-1}}.
    \label{eq:opfieldmap}
  \end{equation}
  The module \(M'\) is called the \emph{contragredient} of \(M\). Opposing the
  field map is involutive, that is, \(Y^{\opp\, \opp}_{M} = Y_{M}\), hence
  the canonical linear isomorphism \(M\cong M''\) above is an isomorphism of
  \(V\)-modules.
  \label{thm:contragredients}
\end{prop}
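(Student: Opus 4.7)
The plan is to prove the three assertions in sequence. The canonical isomorphism $M \cong M''$ comes from applying the standard finite-dimensional evaluation isomorphism $W \to W^{\ast\ast}$ componentwise. Because $M$ is strongly $B$-graded, each doubly homogeneous piece $M_h^{(\beta)}$ is finite dimensional, so the componentwise evaluation assembles into a canonical linear isomorphism of bigraded vector spaces, and the claim about $M \cong M''$ follows.

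To establish that $Y_{M'}$ endows $M'$ with the structure of a discretely strongly $B$-graded generalised $V$-module, I would work entirely via the transposition formula. First I would check that for each $v \in V$, $\phi \in M'$ and $m \in M$, each coefficient of $z$ in $\inner{\phi}{Y^{\opp}_M(v,z)m}$ is a finite sum. This is where the discreteness of the $L_0$-grading and the finite-dimensionality of the simultaneously homogeneous components are essential: the prefactor $\ee^{zL_1}(-z^{-2})^{L_0}$ applied to a conformal-weight-homogeneous $v$ produces only finitely many conformal weights, and the substitution $z \mapsto z^{-1}$ is then controlled by the lower bound on the $L_0$-spectrum of $M$ in each $B$-graded piece. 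Grading compatibility is immediate since $\ee^{zL_1}(-z^{-2})^{L_0}$ preserves the $A$-grading on $V$. The vacuum axiom $Y_{M'}(\wun,z) = \id_{M'}$ transposes from $Y^{\opp}_M(\wun,z) = \id_M$, which in turn uses $L_0 \wun = 0 = L_1 \wun$. The $L_{-1}$-derivation property is equivalent to the identity $\partial_z Y^{\opp}_M(v,z) = Y^{\opp}_M(L_{-1}v,z)$, which follows from a short computation with the commutators $[L_0,L_{-1}] = L_{-1}$ and $[L_1,L_{-1}] = 2L_0$ on top of the $L_{-1}$-derivation of $Y_M$. The Jacobi identity is the main technical step; the standard route is to derive a Jacobi identity for $Y^{\opp}_M$ from that of $Y_M$ by a formal change of variable together with the Möbius covariance of the vertex operators, and then to transpose. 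Discrete strong grading of $M'$ is then a routine book-keeping of dimensions and lower bounds.

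For the involutivity of opposing, I would compute directly that
\[
Y^{\opp,\opp}_M(v,z) = Y_M\brac*{\ee^{z^{-1}L_1}(-z^2)^{L_0}\ee^{zL_1}(-z^{-2})^{L_0}v,\,z}
\]
and reduce to the identity that the operator $\ee^{z^{-1}L_1}(-z^2)^{L_0}\ee^{zL_1}(-z^{-2})^{L_0}$ acts as $\id_V$. This is the main obstacle; it is a calculation in the three-dimensional Möbius subalgebra $\cspn_{\CC}\set{L_{-1},L_0,L_1}$ of the Virasoro algebra, reflecting the fact that inversion $z \mapsto z^{-1}$ on $\mathbb{P}^1$ is an involution. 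Once $Y^{\opp,\opp}_M = Y_M$ has been established, both $M$ and $M''$ carry literally the same field map under the canonical bigraded linear identification of the first step, so that identification is automatically an isomorphism of $V$-modules.
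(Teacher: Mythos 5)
The paper does not prove this proposition; it is quoted verbatim from HLZ Part I, Theorem 2.34 (which in turn rests on FHL Section 5), so there is no ``paper's own proof'' to compare against. Your reconstruction follows the standard HLZ/FHL route and the outline is essentially correct: componentwise double duality for $M\cong M''$, axiom-by-axiom transposition for the module structure on $M'$ using the opposed field map, and a Möbius $\mathfrak{sl}(2)$ computation for involutivity. The remarks about discreteness of the $L_0$-spectrum and finite-dimensionality of the doubly homogeneous pieces controlling lower truncation are the right considerations, and you correctly identify the Jacobi identity as the genuine technical step (FHL Proposition~5.3.1 is where the work happens).

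One point you should make explicit: your computation of $\ee^{z^{-1}L_1}(-z^2)^{L_0}\ee^{zL_1}(-z^{-2})^{L_0}$ reduces, after conjugating $\ee^{zL_1}$ past $(-z^2)^{L_0}$ using $[L_0,L_1]=-L_1$, to $(-z^2)^{L_0}(-z^{-2})^{L_0}$, and this is the identity on $V$ \emph{because $V$ has integral $L_0$-grading as a \voa{}} (on a weight-$n$ vector it is $(-1)^{2n}z^0=1$). With a non-integral grading, or with an inconsistent choice of $\log(-1)$ in the two factors, you would instead pick up $\ee^{\pm 2\pi\ii L_0}$. This is exactly why the opposed field map is defined with $V$ inputs rather than module inputs, and why involutivity cannot be asserted blindly; it is worth a sentence. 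Relatedly, your claim that $\ee^{zL_1}(-z^{-2})^{L_0}v$ involves only finitely many conformal weights uses that $L_0$ is bounded below on $V$, so $L_1^k v = 0$ for $k>\wt v$; again this is the \voa{} axioms doing silent work. Neither of these is a gap in the argument so much as an implicit hypothesis that should be named.
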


Note that by (\ref{eq:opfieldmap}) the opposed field map depends on the
conformal (or at least the M\"obius)
structure on the \voa{}, that is, the actions of the Virasoro \(L_0\) and
\(L_1\) operators enter explicitly.
Note further that the opposed field map can be used to define an action of \(V\) on
\(M'\) (or even the full vector space dual \(M^\ast\)) for any weak module \(M\), however, in general the lower truncation
axiom need not hold and thus the terms in the Jacobi identity need not
converge. There are numerous boundedness conditions on conformal weights
which are sufficient for module structures on \(M'\). Here we shall only
consider discrete strong gradation, as this is also a sufficient condition
for tensor product structures in module categories to be considered below.

\begin{defn}
  Let \(A\leq B\) be abelian groups, \(V\) an \(A\)-graded \voa{} and let
  \(M_1, M_2, M_3\) be \(B\)-graded weak \(V\)-modules. Denote by
  \(M_3\set{z}\sqbrac*{\log z}\) the space of formal power 
  series in \(z\) and \(\log z\) with coefficients in \(M_3\), where the
  exponents of \(z\) can be arbitrary complex numbers and with only finitely
  many \(\log z\) terms for any fixed exponent of \(z\).
  A \emph{logarithmic intertwining operator of type
  \(\binom{M_3}{M_1,M_2}\)} is a linear map
  \begin{align}
    \mathcal{Y}:M_1\otimes M_2&\to M_3\set{z}\sqbrac*{\log z},\nonumber\\
    m_1\otimes m_2&\mapsto
                    \mathcal{Y}\brac*{m_1,z}m_2=\sum_{\substack{s\ge0\\t\in
    \CC}}\brac*{m_1}_{t,s} m_2z^{-t-1}\brac*{\log z}^s
  \end{align}
  where \(\brac*{m_1}_{t,s}\in \Homgrp{\CC}{M_2}{M_3}\),
  satisfying the following properties.
  \begin{enumerate}
  \item \emph{Lower truncation}: For any \(m_i\in M_i\), \(i=1,2\), and \(s\ge0\)
	\begin{equation}
	\brac*{m_1}_{t+k,s}m_2 = 0
	\end{equation}   
	for sufficiently large $k \in \ZZ$. 
  \item \emph{\(L_{-1}\) derivation}: For any \(m_i\in M_i\),\ \(i=1,2\),
    \begin{equation}
      \mathcal{Y}(L_{-1}m_1,z)m_2=\frac{\dd}{\dd z}\mathcal{Y}(m_1,z)m_2.
    \end{equation}
  \item \emph{Jacobi identity}: For any \(v\in V\), \(m_i\in M_i\),\ \(i=1,2\),
    \begin{align}
      z_0^{-1} \delta \brac*{\frac{z_1-z_2}{z_0}} Y_{M_3}(v,z_1) \mathcal{Y}\brac*{m_1,z_2}
      m_2 = & z_0^{-1} \delta \brac*{\frac{-z_2 +z_1}{z_0}} \mathcal{Y}\brac*{m_1,z_2}
              Y_{M_2}(v,z_1) m_2 
              + z_2^{-1} \delta \brac*{\frac{z_1 - z_0}{z_2}} \mathcal{Y}\brac*{Y_{M_1}(v,z_0)
              m_1,z_2} m_2 .
                    \label{eq:intjac}
    \end{align}
  \end{enumerate}
  The intertwining operator \(\mathcal{Y}\) is called \emph{grading
    compatible} if addition to the conditions above it also satisfies the
  following condition.
  \begin{enumerate}[resume]
\item \emph{Grading compatibility}: For any \(\beta_1,\beta_2\in B\), \(m_1\in M_1^{(\beta_1)}\)
  \begin{equation}
    \mathcal{Y}(m_1,z) M_2^{(\beta_2)} \subset
    M_3^{(\beta_1+\beta_2)}\set{z}\sqbrac*{\log z}.
  \end{equation}  
\end{enumerate}
The conditions above are all linear and so we denote by
\begin{equation}
  \ispc{M_1}{M_2}{M_3},\qquad
  \grispc{M_1}{M_2}{M_3},
\end{equation}
respectively, the vector space of all logarithmic intertwining operators of type
  \(\binom{M_3}{M_1,M_2}\) and the subspace of all grading compatible ones.
\end{defn}

Note that if, as will be the case in \cref{sec:voas},
the \(B\)-grading corresponds to
eigenvalues of zero modes of certain vectors in \(V\) of conformal weight 1,
then the Jacobi identity implies that all logarithmic intertwining operators
are grading compatible. Intertwining operators admit a dualisation analogous
to the opposed field map in \cref{thm:contragredients} which will prove
crucial to showing the existence of \gv{} structures on \voa{} module categories.

\begin{thm}[Huang-Lepowsky-Zhang {\cite[Part II Proposition 3.46]{HuaLog}}]
  Let \(M_1,M_2,M_3\) be strongly graded generalised modules over some \voa{}
  \(V\). Then there exists a natural linear isomorphism \(A:\ispc{M_1}{M_2}{M_3}\to
  \ispc{M_1}{M_3^\prime}{M_2^\prime}\), which on any intertwining operator
  \(\mathcal{Y}\in \ispc{M_1}{M_2}{M_3}\)
  evaluates as
  \begin{equation}
    \pair{A(\mathcal{Y})\brac*{m_1,x}m_3^\prime}{m_2}_{M_2}
    =\pair{m_3^\prime}{\mathcal{Y}\brac*{\ee^{xL_1}\ee^{\ii\pi
          L_0}\brac*{x^{-L_0}}^2m_1,x^{-1}}m_2}_{M_3},\qquad m_1\in M_1,\
    m_2\in M_2,\ m_3^\prime\in M_3^\prime,
  \end{equation}
  where the subscript indicates which module the pairings are evaluated in.
  The isomorphism \(A\) preserves grading and hence restricts to a natural
  isomorphism \(\grispc{M_1}{M_2}{M_3}\to
  \grispc{M_1}{M_3^\prime}{M_2^\prime}\).
  \label{thm:intopduals}
\end{thm}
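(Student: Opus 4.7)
The plan is to verify in sequence that the formula yields a well-defined map, that it satisfies the three defining axioms of a logarithmic intertwining operator, that it is bijective, and that it restricts to grading-compatible operators. Well-definedness is essentially bookkeeping: for fixed $m_1\in M_1$ and $m_3^\prime\in M_3^\prime$, the right-hand side viewed as a linear functional of $m_2\in M_2$ is supported compatibly with strong grading, since $m_3^\prime$ meets only finitely many graded pieces of $M_3^\prime$ and $\mathcal{Y}$ shifts gradings by that of $m_1$; strong gradedness of $M_2$ then ensures that the coefficient of each monomial $x^{-t-1}\brac*{\log x}^s$ lies in $M_2^\prime$ rather than the full vector space dual.

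Next I would verify the three axioms. Lower truncation follows from that of $\mathcal{Y}$: the substitution $x\mapsto x^{-1}$ reverses the boundedness direction, but the actions of $\ee^{xL_1}$ and of the $L_0$-shifts on $m_1$ restore a lower bound on the exponents of $x$. The $L_{-1}$-derivation property reduces to a direct computation using $\comm{L_{-1}}{L_0}=L_{-1}$, $\comm{L_{-1}}{L_1}=-2L_0$, the chain rule for $x\mapsto x^{-1}$, and the $L_{-1}$-derivation property of $\mathcal{Y}$ itself. The Jacobi identity is the critical step: starting from the Jacobi identity for $\mathcal{Y}$, I would pair both sides into $m_3^\prime$ on the left and $m_2$ on the right, and then use the defining relation of the opposed field map from \cref{thm:contragredients} to translate $Y_{M_3}$ and $Y_{M_2}$ into $Y_{M_3^\prime}$ and $Y_{M_2^\prime}$. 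The conjugation factors $\ee^{zL_1}\brac*{-z^{-2}}^{L_0}$ produced by these translations mirror precisely the ones built into the formula for $A(\mathcal{Y})$, and after the substitution $z_j\mapsto z_j^{-1}$ in the three algebraic delta distributions, their standard rearrangement identities reassemble the result into the Jacobi identity for $A(\mathcal{Y})$.

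Bijectivity is then established by computing $A\circ A$. The combined effect of applying the conjugation $\ee^{xL_1}\ee^{\ii\pi L_0}\brac*{x^{-L_0}}^2$ twice together with the substitution $x\mapsto x^{-1}$ unwinds, using the commutation relations among $L_{-1}$, $L_0$ and $L_1$, to the identity on $M_1$, modulo the canonical identification $M_i\cong M_i^{\prime\prime}$ from \cref{thm:contragredients}. Naturality in $M_1$, $M_2$ and $M_3$ follows automatically from the functoriality of the formula. Finally, grading compatibility is preserved once one notes that the pairing of $M_i$ with $M_i^\prime$ is diagonal in the $B$-grading, so the grading shift by $m_1$ in $\mathcal{Y}$ is transported directly to $A(\mathcal{Y})$.

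The main obstacle will be the Jacobi identity, where the three delta distributions must be manipulated while simultaneously moving two distinct vertex-operator actions across dual pairings, each carrying a substitution $z_j\mapsto z_j^{-1}$ together with its own $L_0$ and $L_1$ conjugations. Tracking the branches of fractional powers and logarithms through these substitutions, and ensuring compatibility between the formal series expansions of the delta distributions before and after the substitution, is the only genuinely delicate part of the argument.
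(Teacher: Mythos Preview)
The paper does not prove this theorem; it is quoted verbatim as a result of Huang--Lepowsky--Zhang with a citation to \cite[Part II, Proposition 3.46]{HuaLog} and no accompanying proof. There is therefore nothing in the paper to compare your proposal against. Your outline follows the standard route one finds in the cited source: well-definedness from strong grading, direct verification of lower truncation and the $L_{-1}$-derivative property, the Jacobi identity via the opposed field map and delta-function identities, and bijectivity by iterating $A$. That is the expected argument, and your identification of the Jacobi step as the delicate part is accurate.
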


The Jacobi identity for intertwining operators implies that intertwining operators are essentially maps from a pair \(M_1,M_2\) of modules
to a third module \(M_3\), which are bilinear in the action of the \voa{} \(V\). It
therefore makes sense to ask if there exists some universal tensor product
module through which all intertwining operators factor. That is, given some
choice of \(V\) module category \(\Ccat\) and two modules \(M_1,M_2\in \Ccat\),
does there exist a module \(M_1\boxtimes M_2\in\Ccat\) together with an intertwining operator
\(\iop{M_1,M_2}\in V\binom{M_1\boxtimes M_2}{M_1,M_2}\) such that for any
\(R\in \Ccat\) and intertwining operator \(\mathcal{Y}\in
V\binom{R}{M_1,M_2}\) there exists a unique module map \(f\in
\Homgrp{\Ccat}{M_1\boxtimes M_2}{R}\) such that \(\mathcal{Y}=f\circ
\iop{M_1,M_2}\)? That is, such that the diagram
\begin{equation}
  \begin{tikzcd}
    M_1\otimes M_2 \arrow[rr,"\iop{M_1,M_2}"] \arrow[rrd,"\mathcal{Y}"] && M_1\boxtimes M_2\set{z}\sqbrac*{\log z}\arrow[d,dashed,"\exists!f"]\\
    &&R\set{z}\sqbrac*{\log z}
  \end{tikzcd}
  \label{eq:fusuprop}
\end{equation}
commutes. Assuming that \(M_1\boxtimes M_2\in\Ccat\) exists for all pairs of
modules in \(\Ccat\), \(-\fuse -\) becomes a bifunctor after defining the
following evaluation on pairs of morphisms. For \(M_1,N_1,M_2,N_2\in\Ccat\) and morphisms \(f_1:M_1\to N_1\), \(f_2:M_2\to
N_2\), the tensor product morphism \(f_1\fuse f_2: M_1\fuse M_2 \to N_1\fuse
N_2\) is the unique morphism, characterised by the universal property
\eqref{eq:fusuprop}, such that \(\iop{N_1,N_2}\circ(f_1\otimes f_2)=(f_1\fuse
f_2)\circ \iop{M_1,M_2}\).
This characterisation of tensor products (also
called fusion products) of \voa{} modules via a universal property is
conceptually very clear and allows us
to construct maps out of a tensor products in terms of vertex operators; this
will be used frequently in the sequel. However,
it does not provide an actual construction, nor does it guarantee existence.
We sketch some of the ideas of the
construction of \(M_1\fuse M_2\) here but refer to the original source
\cite[Part IV]{HuaLog} and the 
review \cite{KanRid18} for details. The fusion product of two modules
\(M_1,M_2\) can be constructed inside \((M_1\otimes M_2)^\ast\), the full
vector space dual of the complex tensor product. While \((M_1\otimes
M_2)^\ast\) is not a \(V\)-module, it is possible to move the action of fields
in \(V\) from either of the tensor factors \(M_1,M_2\) to \((M_1\otimes
M_2)^\ast\) using a generalisation of the opposed field map. This leads to the consideration of the subspace
\(\comp{\Mmod_1}{\Mmod_2}\subset (M_1\otimes M_2)^\ast\) consisting of all
linear functionals on which the evaluation of a field has only finitely many
singular terms and on which the transported action of fields from either
tensor factor agrees, that is, linear functionals compatible with the lower
truncation property of \cref{def:module} and the \voa{} version of bilinearity. The name COMP refers to the compatibility of the
actions \(V\) on the two tensor factors transported to \((M_1\otimes M_2)^\ast\). It was shown in \cite[Part IV, Theorem 5.48]{HuaLog}
that \(\comp{\Mmod_1}{\Mmod_2}\) is a weak \(V\) module and should morally be
thought of as the contragredient of the fusion product \(M_1\boxtimes M_2\). The subspace
\(\comp{\Mmod_1}{\Mmod_2}\) is however usually too large to be contained in
the category \(\Ccat\) one is considering. For example, it generally
contains vectors which are not finite sums of homogeneous vectors. Under
suitable conditions on \(\Ccat\) (such as those in \cref{thm:sufHLZ}) one can
construct the subspace \(M_1\hlz M_2\subset \comp{\Mmod_1}{\Mmod_2}\)
consisting of the sum of all images of module maps from objects in \(\Ccat\)
into \(\comp{\Mmod_1}{\Mmod_2}\). The contragredient \((M_1\hlz M_2)'\) is
then the fusion product module satisfying the universal property \eqref{eq:fusuprop}.

\begin{prop}[Huang-Lepowsky-Zhang {\cite[Part VIII, Section 12]{HuaLog}}]
  Let \(A\leq B\) be abelian groups, let $V$ be an \(A\)-graded \voa{} and
  $\Ccat$ a choice of category of \(V\)-modules (that is a subcategory of the
  category of all \(B\)-graded \(V\)-modules) containing \(V\) as an object such that the following
  conditions hold.
  \begin{enumerate}
  \item For any \(M_1,M_2\in \Ccat\) there exist \(M_1\boxtimes M_2\in \Ccat\) and
    \(\iop{M_1,M_2}\in \grispc{M_1}{M_2}{M_1\boxtimes M_2}\) such that the
    universal property \eqref{eq:fusuprop} holds.
  \item For any \(M_1,M_2,M_3\in \Ccat\), there is a family of isomorphisms \(A_{M_1,M_2,M_3}^{x_1,x_2}: \brac*{M_1\boxtimes
      M_2}\boxtimes M_3\to M_1\boxtimes \brac*{M_2\boxtimes M_3}\)  depending
    on complex variables \(x_1,x_2\)
    that is functorial in \(M_1,M_2,M_3\).
    Further, for \(m_i\in M_i\), \(x_1,x_2\in \CC\), \(|x_1|>|x_2|>|x_1-x_2|>0\), the
    expressions
    \begin{equation}
      \ifld{M_1,M_2\fuse M_3}{m_1}{x_1}\ifld{M_2,M_3}{m_2}{x_2}m_3,\qquad
      \ifld{M_1\fuse M_2,M_3}{\ifld{M_1,M_2}{m_1}{x_1-x_2}m_2}{x_2}m_3,
    \end{equation}
    converge absolutely for any choice of branch of logarithm for
      \(x_1,x_2\)
    in the algebraic completions of \(M_1\fuse\brac*{M_2\fuse
      M_3}\) and \(\brac*{M_1\fuse M_2}\fuse M_3\), respectively. Finally,
    \begin{equation}
      \overline{A}_{M_1,M_2,M_3}^{x_1,x_2}\brac*{\ifld{M_1,M_2\fuse M_3}{m_1}{x_1}\ifld{M_2,M_3}{m_2}{x_2}m_3}=
      \ifld{M_1\fuse M_2,M_3}{\ifld{M_1,M_2}{m_1}{x_1-x_2}m_2}{x_2}m_3,
    \end{equation}
    where \(\overline{A}_{M_1,M_2,M_3}^{x_1,x_2}\) is the natural extension of
    \(A_{M_1,M_2,M_3}^{x_1,x_2}\) to algebraic completions.
    \label{itm:assoccond}
  \end{enumerate}
  Then \(\Ccat\) is a braided monoidal category with the \voa{} \(V\) as the
  monoidal unit, whose structure isomorphisms
  are uniquely characterised by the following.
  \begin{itemize}
  \item For \(M\in \Ccat\) the unit morphisms are uniquely characterised by
    \begin{align}
      \ell_M\brac*{\ifld{V,M}{v}{z}m}&=Y_M(v,z)m\nonumber\\
      r_M\brac*{\ifld{M,V}{m}{z}v}& =\ee^{zL_{-1}}Y_M(v,-z)m,
      \label{eq:unitmor}
    \end{align}
    where \(Y_M\) is the field map of \(V\) acting on the module
    \(M\), \(v\in V\) and \(m\in M\).
  \item For \(M_1,M_2\in \Ccat\) the braiding isomorphism \(c_{M_1,M_2}:M_1\fuse M_2\to
    M_2\fuse M_1\) is uniquely characterised by
    \begin{equation}
      c_{M_1,M_2}\brac*{\ifld{M_1,M_2}{m_1}{z}m_2}=\ee^{zL_{-1}}\ifld{M_2,M_1}{m_2}{\ee^{\ii\pi}z}m_1,
    \end{equation}
    where \(m_1\in M_1\) and \(m_2\in M_2\).
  \item There is a twist morphism \(\theta_{M_1}=\ee^{2\pi\ii
      L_0}|_{M_1}\), \(M_1\in\Ccat\), which satisfies \(\theta_V=\id_V\) and for any \(M_2\in \Ccat\)
    also satisfies the balancing equation
    \begin{equation}
      \theta_{M_1\fuse M_2}=c_{M_2,M_1}\circ
      c_{M_1,M_2}\circ\brac*{\theta_{M_1}\fuse \theta_{M_2}}.
      \label{eq:balance}
    \end{equation}
    \label{itm:VOAtwist}
  \item For \(i=1,2,3\), \(M_i\in\Ccat\), \(m_i\in M_i\), \(x_1,x_2\in \RR\), \(x_1>x_2>x_1-x_2>0\), the
    associativity isomorphism
    \(A_{M_1,M_2,M_3}:M_1\fuse\brac*{M_2\fuse M_3}\to \brac*{M_1\fuse
      M_2}\fuse M_3\) is
    given by the isomorphism of Part \ref{itm:assoccond} above in the limit
    \(x_2\to1,\ x_1-x_2\to1\), that is
    \begin{equation}
      A_{M_1,M_2,M_3}=\lim_{x_2\to1}\lim_{x_1-x_2\to1}A_{M_1,M_2,M_3}^{x_1,x_2},
    \end{equation}
    where the limits are taken along the positive real line with choice of
    branch of logarithm such that the arguments for \(x_1,x_2,x_1-x_2\) are all \(0\).
  \end{itemize}
  \label{thm:tenstr}
\end{prop}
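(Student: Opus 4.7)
The strategy is to use the universal property \eqref{eq:fusuprop} of the fusion product systematically: each structure isomorphism is defined as the unique factorisation of an explicit intertwining operator, and each coherence axiom is then verified by the uniqueness half of the same universal property.

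For the unit isomorphisms I first note that, for any \(V\)-module \(M\), the vertex operator \(Y_M\) is itself a grading-compatible intertwining operator of type \(\binom{M}{V, M}\), as can be read off from the Jacobi identity in \cref{def:module}. The universal property \eqref{eq:fusuprop} then produces a unique morphism \(\ell_M : V \fuse M \to M\) satisfying \eqref{eq:unitmor}, and invertibility follows because \(m \mapsto \ifld{V, M}{\wun}{z} m\) provides a two-sided inverse via the vacuum axiom. Skew-symmetry for intertwining operators, a consequence of the Jacobi identity formally parallel to \cref{thm:intopduals}, shows that \(m \otimes v \mapsto \ee^{z L_{-1}} Y_M(v, -z) m\) is an intertwining operator of type \(\binom{M}{M, V}\), producing \(r_M\) by the same mechanism. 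The braiding \(c_{M_1, M_2}\) comes from the skew-symmetric intertwining operator \(m_1 \otimes m_2 \mapsto \ee^{z L_{-1}} \ifld{M_2, M_1}{m_2}{\ee^{\ii \pi} z} m_1\), with inverse obtained from the opposite branch of logarithm.

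To verify the coherence axioms, the plan is to precompose both sides of each equation with the appropriate iterated universal intertwining operator (a quadruple tensor for the pentagon, a triple tensor for the hexagons), reducing equality of composites of structure morphisms to equality of iterated intertwining operators on generic vectors. The associativity input enters as the analytic continuation statement packaged in Part~\ref{itm:assoccond}, and naturality in each module variable follows by the same pattern. For the twist \(\theta_M = \ee^{2 \pi \ii L_0}\), well-definedness on any generalised \(L_0\)-graded module is immediate from the grading, and the relation \([L_0, v_k] = (\wt v - k - 1) v_k\) with integer conformal weights shows that every mode shifts \(L_0\)-eigenvalues by an integer, whence \(\ee^{2\pi\ii L_0}\) commutes with the \(V\)-action and \(\theta_V = \id_V\) since \(V\) is \(\NN\)-graded. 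The balancing equation \eqref{eq:balance} then reduces, by precomposition with \(\iop{M_1, M_2}\) and invocation of the universal property, to the elementary conjugation identity
\[
\ee^{2\pi\ii L_0} \ifld{M_1, M_2}{m_1}{z} m_2 = \ifld{M_1, M_2}{\ee^{2\pi\ii L_0} m_1}{\ee^{2\pi\ii} z} \ee^{2\pi\ii L_0} m_2,
\]
combined with the observation that \(c_{M_2, M_1} \circ c_{M_1, M_2}\) pulled back through \(\iop{M_1, M_2}\) implements precisely the monodromy \(z \mapsto \ee^{2\pi\ii} z\) under the chosen branch of logarithm.

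The principal obstacle is not any single categorical axiom in isolation but the analytic foundation on which all of them rest: the hypothesis of Part~\ref{itm:assoccond} packages deep absolute convergence and analytic continuation theorems for products of intertwining operators, and it is precisely these that make each universal property argument go through. Once those analytic inputs are granted, the remaining verifications amount to formal manipulations of intertwining operators governed by the uniqueness clause of \eqref{eq:fusuprop}.
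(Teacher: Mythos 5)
Note first that the paper does not supply a proof of this proposition; it is quoted verbatim from Huang, Lepowsky and Zhang and is used as a black box, so your sketch is really being measured against the HLZ construction rather than against an argument in this paper. At the level of strategy you have the right picture: each structure morphism is obtained by factoring an explicit intertwining operator through the universal property \eqref{eq:fusuprop}, uniqueness in that universal property is what identifies composites, and the hypothesis in Part~\ref{itm:assoccond} carries the analytic weight.

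There is, however, a concrete gap in your invertibility argument for the unit, and a more diffuse overstatement at the end. Write \(\iota_M(m)\) for the \(z\)-independent element \(\ifld{V,M}{\wun}{z}m \in V\fuse M\) (independence of \(z\) follows from \(L_{-1}\)-derivation and \(L_{-1}\wun=0\)). The vacuum property \(Y_M(\wun,z)=\id_M\) does give \(\ell_M\circ\iota_M=\id_M\) at once, but the other composite \(\iota_M\circ\ell_M=\id_{V\fuse M}\) amounts, via uniqueness in \eqref{eq:fusuprop}, to the identity \(\ifld{V,M}{\wun}{w}\,Y_M(v,z)m = \ifld{V,M}{v}{z}m\); this is not the vacuum axiom but a nontrivial commutativity/associativity statement for the universal intertwining operator, which HLZ establish as part of a substantially longer construction. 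Similarly, the closing claim that once Part~\ref{itm:assoccond} is granted the coherence axioms are ``formal manipulations'' understates what is actually needed: Part~\ref{itm:assoccond} governs triple products, whereas verifying the pentagon requires comparing compositions acting on quadruple tensors, and this rests on absolute convergence and consistent analytic continuation of four-fold products of intertwining operators. That is precisely where HLZ spend most of their effort; the pentagon does not collapse to the uniqueness clause of \eqref{eq:fusuprop} alone once triple-product associativity is known.
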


Proving that a choice of \voa{} module category admits the braided tensor structure of
\cref{thm:tenstr} is a highly non-trivial task. Fortunately a number of
sufficient conditions were identified in \cite{HuaLog}, which we quote and
summarise in the following theorem.
\begin{thm}[Huang-Lepowsky-Zhang]
  Let \(A\leq B\) be abelian groups and let $V$ be an \(A\)-graded
  \voa{}. Then the following conditions on a choice of \(B\)-graded module
  category \(\Ccat\) are sufficient for \(\Ccat\) to have the 
  braided monoidal structure induced from intertwining operators described in
  \cref{thm:tenstr}.
  \begin{enumerate}
  \item The \voa{} \(V\) is an object in \(\Ccat\) and all objects of
    \(\Ccat\) are strongly \(B\)-graded. For any \(M_1, M_2\in \Ccat\) the logarithmic
    intertwining operator \(\iop{M_1,M_2}\) satisfying the universal property
    in the definition of the tensor product of \(M_1\) and \(M_2\) is grading
    compatible (hence all logarithmic
    intertwining operators are grading compatible)
    \cite[Part III, Assumption 4.1]{HuaLog}.\label{itm:grading}
  \item $\categ{C}$ is a full subcategory of the category of 
    strongly \(B\)-graded modules and is closed under the contragredient functor and
    under taking finite direct sums 
    \cite[Part IV, Assumption 5.30]{HuaLog}.
    \label{itm:sums}
  \item For any object in $\categ{C}$ all conformal weights are real and the
    non-semisimple part of $L_0$ acts nilpotently, that is, there is a uniform
    bound on the size of Jordan blocks for any given module though there need
    not be global bound for the entire category
    \cite[Part V, Assumption 7.11]{HuaLog}.\label{itm:real}
  \item  \(\categ{C}\) is closed under images {of module homomorphisms}
    \cite[Part VI, Assumption 10.1.7]{HuaLog}.\label{itm:closure} 
  \item The convergence and extension properties for either
    products or iterates holds \cite[Part VII, Theorem 11.4]{HuaLog}. \label{itm:prodsits}
  \item For any objects $M_1, M_2 \in \Ccat$, let $M_{v}$ be
    the $V$-module generated 
    by a \(B\)-homogeneous generalised \(L_0\) eigenvector
    $v \in \comp{\Mmod_1}{\Mmod_2}$.
    If $M_{v}$ is lower bounded then $M_{v}$ is strongly graded
    and an object in 
    $\Ccat$ \cite[Theorem 3.1]{HuaApp17}.
    \label{itm:cycmods}
  \end{enumerate}
  \label{thm:sufHLZ}
\end{thm}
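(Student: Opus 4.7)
The statement is a compilation of sufficient conditions distributed across the HLZ series, so the plan is to trace through that series and verify that the six hypotheses collectively supply every ingredient required by \cref{thm:tenstr}. My proof would organise the verification according to which piece of the braided monoidal structure each group of conditions establishes: the fusion bifunctor \(\fuse\), the associativity isomorphism \(A_{M_1,M_2,M_3}\), and the unit, braiding and twist morphisms.

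First I would show that conditions \ref{itm:grading}, \ref{itm:sums}, \ref{itm:closure}, and \ref{itm:cycmods} together guarantee the existence of \(M_1\fuse M_2\in\Ccat\) for any \(M_1,M_2\in\Ccat\), together with an intertwining operator \(\iop{M_1,M_2}\) satisfying the universal property \eqref{eq:fusuprop}. Following the construction sketched before the theorem, one builds \(\comp{\Mmod_1}{\Mmod_2}\) inside \((M_1\otimes M_2)^\ast\); grading compatibility \ref{itm:grading} ensures that the transported \(V\)-actions preserve the \(B\)-grading, while closure under contragredients \ref{itm:sums} allows the passage between \(M_1\hlz M_2\) and its dual. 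Condition \ref{itm:cycmods} provides that each homogeneous, lower-bounded, generalised eigenvector of \(L_0\) in \(\comp{\Mmod_1}{\Mmod_2}\) generates a strongly graded submodule which lies in \(\Ccat\); closure under images and finite direct sums \ref{itm:closure}, \ref{itm:sums} then ensure that the sum \(M_1\hlz M_2\) of all such submodules, and hence its contragredient \((M_1\hlz M_2)'=M_1\fuse M_2\), is again an object of \(\Ccat\). Condition \ref{itm:real} enters here to ensure that the generalised \(L_0\)-eigenspace decompositions are well-behaved, with Jordan blocks of bounded size and real weights.

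Next, I would invoke conditions \ref{itm:real} and \ref{itm:prodsits} to establish the associativity data required by part \ref{itm:assoccond} of \cref{thm:tenstr}. The convergence and extension property \ref{itm:prodsits} is precisely the analytic hypothesis used in Part~VII of \cite{HuaLog} to prove that iterated products and iterates of intertwining operators converge absolutely on the appropriate domain in \((x_1,x_2)\) and admit analytic continuation, and that the resulting expression factors through a natural isomorphism \(A_{M_1,M_2,M_3}^{x_1,x_2}\) between the two triple fusion products. The reality and nilpotency of \(L_0\) from \ref{itm:real} is what ensures that logarithms appearing in intertwining operators admit a consistent branch assignment so the limits defining \(A_{M_1,M_2,M_3}\) exist. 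Given the bifunctor and the associativity isomorphism, the unit morphisms \eqref{eq:unitmor}, the braiding, and the twist \(\theta_{M}=\ee^{2\pi\ii L_0}\) are produced directly by the formulas of \cref{thm:tenstr}\ref{itm:VOAtwist}; the coherence axioms and the balancing equation \eqref{eq:balance} then follow from the universal property \eqref{eq:fusuprop}, the Jacobi identity and the \(L_{-1}\)-derivation property for intertwining operators, as carried out in Part~VIII of \cite{HuaLog}.

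I do not expect any single verification to be difficult in isolation; rather the main obstacle is bookkeeping, namely matching the compact list of hypotheses presented here to the assumptions scattered across Parts~III--VIII of \cite{HuaLog} and the later refinement \cite{HuaApp17}. The most delicate matching is for condition \ref{itm:prodsits}: the convergence and extension property must be shown to be exactly what is needed for the associativity isomorphism of \cref{thm:tenstr}\ref{itm:assoccond}, including the analytic continuation of iterated intertwining operators, and this reconciliation is the subtlest step of the whole argument.
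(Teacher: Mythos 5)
The theorem you are asked to prove is not proved in the paper at all: it is stated as a compilation, with each clause carrying a pointer to the relevant assumption or theorem in the HLZ series and to \cite{HuaApp17}. There is therefore no ``paper's own proof'' to compare against; the paper simply cites Parts III--VIII of \cite{HuaLog} and \cite[Theorem 3.1]{HuaApp17} as the proof. Your sketch is a reasonable high-level narrative of why these collated hypotheses jointly supply the ingredients for \cref{thm:tenstr}: you correctly identify that conditions (1), (2), (4) and (6) handle the existence of the fusion bifunctor inside $\Ccat$ (with (6) being the decisive addition ensuring the $\hlz$-construction lands in the category), that (5) is the analytic input for the associativity data, and that the remaining structure morphisms are then produced by the universal property, the Jacobi identity and the $L_{-1}$-derivation. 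This is consistent with the architecture of the HLZ argument.

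A small caveat: you invoke condition (3) (real weights, nilpotent non-semisimple $L_0$) both to control the generalised $L_0$-eigenspace decomposition in the fusion construction and to fix the branch of logarithm in the associativity limit. Only the second use is really where (3) enters; in the HLZ framework the role of \cite[Part V, Assumption 7.11]{HuaLog} is to guarantee that matrix coefficients of (iterated) intertwining operators are genuine analytic functions of the insertion points (solutions of formal differential equations with regular singular points in the physical sense), so that the convergence and extension arguments of Part VII make sense. The well-behavedness of the $L_0$-decomposition of $\comp{\Mmod_1}{\Mmod_2}$ is instead what condition (6) explicitly supplies. This is a minor attribution issue, not a logical gap, and does not affect the overall soundness of your roadmap. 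Since any genuine proof would reproduce substantial portions of the HLZ series, staying at the level of a commented roadmap with citations is exactly what the paper itself does.
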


\begin{rmk}
  The above sufficient conditions are in a sense the weakest known conditions
  for a \voa{} module category admitting a braided monoidal
  structure. However, they can in practice be difficult to verify (especially
  Conditions \ref{itm:prodsits} and \ref{itm:cycmods}). Other more restrictive
  and hence more tractable sets of conditions are therefore also commonly considered
  in the literature. 
  The most famous set arguably being:
  \begin{itemize}
  \item The \voa{} \(V\) is \cfin{2}, all \(L_0\) eigenspaces are
    finite dimensional, the only non-zero eigenspaces have non-negative
    integral \(L_0\) eigenvalue, \(\dim V_0 =1\) and \(V\cong V'\).
  \item The category \(\Ccat\) of all admissible (also known as \(\NN\)
    gradable) modules is semisimple.
  \end{itemize}
  If the above conditions hold, then \(\Ccat\) is a modular tensor category and
  the much celebrated Verlinde formula holds \cite{HuaVer08}. A weaker set of
  sufficient conditions only requires the \voa{} to be \cfin{2} and the
  category to be the category of admissible modules without any assumptions on
  semisimplicity (\cfin{2}ness, however, still guarantees that the category is
  finite). Comparatively few general results are known when the
  \cfin{2}ness condition is not satisfied, that is, there is currently no
    known general choice of module category for a general \voa{} that admits 
    the braided monoidal structure of \ref{thm:tenstr}. However, a recent flurry of new
  insights  appears to be changing this at last
  \cite{CreuVir20,CreuDLim20,CreuGL20,CreuSing20,All20} for certain families
  of non-\cfin{2} \voa{s}.
\end{rmk}

With the braided monoidal properties described in \cref{thm:tenstr} and the
sufficient conditions of \cref{thm:sufHLZ} in hand, we can now connect these
structures with the \gv{} structures introduced in \cref{sec:GVcats}.

\begin{thm}
  Let $V$ be a \voa{} and
  $\Ccat$ a choice of category of \(V\)-modules which  contains \(V\) as an
  object, is closed under taking contragredients and which satisfies the two conditions (and hence also the conclusions) of
  \cref{thm:tenstr}. Then \(\Ccat\) is a \rgv{} category with dualising object
  \(V'\) (the contragredient of the \voa{} as a module over
  itself), dualising functor given by the taking of contragredients, and with
  twist \(\theta=\ee^{2\pi\ii  L_0}\).
  \label{thm:gvstr}
\end{thm}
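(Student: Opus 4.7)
The plan has two parts: establish the \gv{} structure with dualising object $V'$ by producing the natural isomorphism \eqref{eq:GVprop}, and then verify the twist compatibility \eqref{eq:dualisedtwist}.

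For the \gv{} structure, my strategy is to chain together three natural bijections. First, the universal property \eqref{eq:fusuprop} of the fusion product identifies $\Hom_\Ccat(X \fuse Y, V')$ with $\grispc{X}{Y}{V'}$. Second, the natural isomorphism $A$ of \cref{thm:intopduals}, applied with $(M_1, M_2, M_3) = (X, Y, V')$, sends $\grispc{X}{Y}{V'}$ to $\grispc{X}{V''}{Y'}$, which is naturally identified with $\grispc{X}{V}{Y'}$ via the canonical isomorphism $V'' \cong V$ of \cref{thm:contragredients}. Third, the universal property applied to $X \fuse V$ followed by the right-unit isomorphism $r_X$ from \cref{thm:tenstr} produces $\Hom_\Ccat(X, Y')$. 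Setting $DY = Y'$ and composing these bijections yields the required isomorphism, natural in both variables. That $D = (-)'$ is an anti-equivalence is then immediate from \cref{thm:contragredients}: the assignment is contravariant and the canonical map $M \to M''$ is a $V$-module isomorphism, while $\Ccat$ is closed under contragredients by assumption.

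For the twist compatibility, $\theta_M = \ee^{2\pi\ii L_0}$ is already known to be a twist on $\Ccat$ by \cref{thm:tenstr}.\ref{itm:VOAtwist}, so only the equality $D(\theta_M) = \theta_{DM}$ needs to be verified. My approach is to compute the $L_0$-action on $M'$ directly from the opposed field map \eqref{eq:opfieldmap}. Since the conformal vector $\omega$ satisfies $L_0 \omega = 2\omega$ and $L_1 \omega = 0$, \eqref{eq:opfieldmap} reduces to $Y_M^{\opp}(\omega, z) = z^{-4} Y_M(\omega, z^{-1})$; extracting the coefficient of $z^{-2}$ shows that $L_0$ on $M'$ is the transpose of $L_0$ on $M$ under the contragredient pairing. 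Exponentiating and recalling that $D$ sends a morphism to its transpose then gives $\theta_{DM} = D(\theta_M)$, establishing \eqref{eq:dualisedtwist}.

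The main obstacle I anticipate is bookkeeping rather than conceptual depth: ensuring that the chain of natural bijections composes into a natural isomorphism in both variables simultaneously, and that each step respects the grading compatibility distinguishing $\grispc{M_1}{M_2}{M_3}$ from the larger $\ispc{M_1}{M_2}{M_3}$. The latter is addressed by the final sentence of \cref{thm:intopduals}; naturality in each variable separately is built into the statements of \cref{thm:intopduals} and of the universal property \eqref{eq:fusuprop}, so joint naturality should follow from a routine Yoneda argument.
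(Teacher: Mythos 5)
Your proposal is correct and follows essentially the same route as the paper's proof: the chain of natural isomorphisms $\Hom(X\fuse Y, V')\cong\grispc{X}{Y}{V'}\cong\grispc{X}{V''}{Y'}\cong\grispc{X}{V}{Y'}\cong\Hom(X,Y')$ using \cref{thm:intopduals}, $V''\cong V$, and the unit isomorphism; and the twist compatibility via $L_0^{\opp}=L_0$ extracted from \eqref{eq:opfieldmap}. (You correctly invoke the right-unit isomorphism $X\fuse V\cong X$ where the paper's prose loosely says ``left unit''; your explicit check that $L_0\omega=2\omega$ and $L_1\omega=0$ gives $Y_M^{\opp}(\omega,z)=z^{-4}Y_M(\omega,z^{-1})$ is a slightly more spelled-out version of the paper's one-line remark.)
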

\begin{proof}
  Let \(X,Y,Z\in \Ccat\) and
  recall the linear isomorphism \(A:\ispc{X}{Y}{Z}\to
  \ispc{X}{Z^\prime}{Y^\prime}\) of \cref{thm:intopduals}.
  When working with a strong grading consider
  instead the restriction \(A:\grispc{X}{Y}{Z}\to \grispc{X}{Z^\prime}{Y^\prime}\).
  Since category \(\Ccat\) is closed under contragredients we therefore also
  have a natural isomorphism
  $\Homgrp{}{X \fuse Y}{Z} \isomto \Homgrp{}{X \fuse Z^\prime}{Y^\prime}$.
  Setting $Z = V^\prime$, we have
  \begin{equation}
    \Homgrp{}{X \fuse Y}{V^\prime} \isomto \Homgrp{}{X\fuse V^{\prime\prime}}{Y^\prime} \isomto
    \Homgrp{}{X\fuse V}{Y^\prime} \isomto \Homgrp{}{X}{Y^\prime}
  \end{equation}
  where we have made use of the canonical isomorphism \(V^{\prime\prime}\simeq V\) of
  \cref{thm:contragredients} and the left
  unit isomorphism $V \fuse Y \simeq Y$.
  This proves that \(\Ccat\) is a braided \gv{} category with
  dualising object $V^\prime$. Next we show that the twist \(\theta_M=\ee^{2\pi \ii L_0}|_M\) and
  the contragredient functor satisfy the compatibility condition \eqref{eq:dualisedtwist}.
  From formula \eqref{eq:opfieldmap} for the opposed field map one sees that
  \(L_0^{\opp}=L_0\) and hence for any module \(M\in \Ccat\),
  \(\brac*{\theta_M}^\prime=\theta_{M^\prime}\).
  Thus \(\Ccat\) is ribbon \gv{}.
\end{proof}

\begin{rmk}
  Note that in \cref{thm:gvstr} we do not require $V^\prime$ and $V$ to be isomorphic as $V$-modules.
  Indeed, $V^\prime$ plays the important structural role of a dualising
  object. Note also that \cref{thm:gvstr} and the remark preceeding
  it together imply that categories of admissible modules over \cfin{2}
  \voa{s} are a source of finite \rgv{} categories.
\end{rmk}

The convergence and extension property of \cref{thm:sufHLZ}.\ref{itm:prodsits}
is a technical condition on the analytic properties of intertwining operators,
whose details we shall not state here. Instead we give sufficient conditions
for the convergence and extension property to hold.

\begin{thm}[Allen-Wood {\cite[Theorem 5.7]{All20}}]
    \label{thm:gradconext}
  Let \(A\le B\) be abelian groups, let \(V\) be an
  \(A\)-graded \voa{} and let \(\overline{V}\) be a vertex subalgebra
  of \(V^{(0)}\). Further, let \(M_i,\ i=0,1,2,3,4\) be 
  \(B\)-graded \(V\)-modules. Finally let \(\mathcal{Y}_1\),
  \(\mathcal{Y}_2\), \(\mathcal{Y}_3\) and \(\mathcal{Y}_4\) be logarithmic
  grading compatible intertwining operators
  of types \(\binom{M_0}{M_1,\ M_4}\), \(\binom{M_4}{M_2,\ M_3}\),
  \(\binom{M_0}{M_4,\ M_3}\) and \(\binom{M_4}{M_1,\ M_2}\)
  respectively.
  If the modules  \(M_i,\ i=0,1,2,3\) (note \(i=4\) is excluded) are
  discretely strongly graded, and graded \cfin{1} as
  \(\overline{V}\)-modules,
  then \(\mathcal{Y}_1\), \(\mathcal{Y}_2\) satisfy the convergence and
  extension property for products and \(\mathcal{Y}_3\), \(\mathcal{Y}_4\)
  satisfy the convergence and extension property for iterates.
  \label{thm:sufcon}
\end{thm}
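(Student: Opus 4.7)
The plan is to adapt Huang's approach for proving convergence and extension properties of intertwining operators in the $C_2$-cofinite setting to our more general hypothesis of graded $C_1$-cofiniteness relative to the vertex subalgebra $\overline{V}$. Concretely, I would fix doubly homogeneous vectors $m_i \in M_i$ for $i = 1, 2, 3$, a doubly homogeneous dual vector $m'_0 \in M'_0$, and study the formal series
\begin{equation}
\Phi(z_1, z_2) = \pair{m'_0}{\mathcal{Y}_1(m_1, z_1)\, \mathcal{Y}_2(m_2, z_2)\, m_3}_{M_0}
\end{equation}
together with the analogous iterate series involving $\mathcal{Y}_3$ and $\mathcal{Y}_4$. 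The aim is to show $\Phi$ converges absolutely for $|z_1| > |z_2| > 0$ to a multivalued analytic function whose expansion around $z_1 = z_2$ matches the iterate, which is precisely the convergence and extension property.

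The heart of the argument is a reduction of generators: graded $C_1$-cofiniteness of each $M_i$ (for $i = 0,1,2,3$) as an $\overline{V}$-module means that in each $B$-graded component the quotient $M_i^{(\beta)} / C_1(M_i)^{(\beta)}$ is finite-dimensional in each conformal weight. Using the Jacobi identity in the form of commutator formulae for intertwining operators with vertex operators from $\overline{V}$, any vector of the form $v_{-1} m$ (with $v \in \overline{V}_h$, $h > 0$) inserted into $\mathcal{Y}_1$ or $\mathcal{Y}_2$ can be exchanged for a linear combination of derivatives of the matrix coefficient in $z_1, z_2$ (with coefficients that are rational in $z_1, z_2$ with poles only at $0$ and $z_1 = z_2$) plus matrix coefficients with a strictly lower conformal weight insertion. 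Iterating this descent produces, within each $B$-grade and within a bounded window of conformal weights, a finite-dimensional space of formal series closed under a suitable differential structure.

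From here, the standard machinery takes over. Finite-dimensionality yields a linear differential system in $z_1, z_2$ whose singular locus consists only of $z_1 = 0$, $z_2 = 0$, $z_1 = z_2$, and $\infty$, and one verifies that these are regular singular points. By classical Fuchs theory, the formal series $\Phi$ then converges absolutely in the region $|z_1| > |z_2| > 0$ and admits a convergent expansion around $z_1 = z_2$ in complex powers of $(z_1 - z_2)$ with finitely many accompanying logarithms. Identifying this expansion with the iterate series uses the Jacobi identity for intertwining operators together with uniqueness of analytic continuation of solutions to the differential system; the argument for products versus iterates is symmetric, swapping the roles of $\mathcal{Y}_1, \mathcal{Y}_2$ with $\mathcal{Y}_3, \mathcal{Y}_4$ and of expansion points $z_1 = 0$ versus $z_1 = z_2$.

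The main obstacle is the reduction step. In Huang's original work the $C_2$-cofiniteness of $V$ itself is used to eliminate factors involving $L_{-1}$-derivatives, whereas here one must work with the weaker condition of graded $C_1$-cofiniteness relative to $\overline{V}$, so every commutator exchange must be executed using only modes of elements of $\overline{V}$ and must respect the $B$-grading throughout. A delicate bookkeeping issue is that the intermediate module $M_4$ is explicitly excluded from the hypothesis, so one must check that the reduction never requires a finiteness property of $M_4$; this is forced by always reducing inside the \emph{outer} modules $M_0, M_1, M_2, M_3$, where $C_1$-cofiniteness is assumed. Verifying these two points carefully is the technical crux and is where the strengthening of the usual argument to the graded $C_1$-cofinite setting really happens.
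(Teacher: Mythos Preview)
The paper does not prove this theorem at all: it is quoted verbatim from Allen--Wood \cite[Theorem 5.7]{All20} and is used as a black box to feed into \cref{thm:simpsufconds}. There is therefore no proof in the paper to compare your proposal against.

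That said, your sketch is broadly faithful to the strategy of the cited source, which indeed adapts Huang's differential-equation argument from the $C_2$-cofinite setting to the graded $C_1$-cofinite setting relative to a subalgebra $\overline{V}$. The key points you identify --- reducing via the Jacobi identity to a finite-dimensional space of matrix coefficients within each $B$-grade, obtaining a Fuchsian system with singularities only at $0$, $z_1=z_2$, and $\infty$, and carefully ensuring that no finiteness hypothesis on $M_4$ is ever invoked --- are exactly the ingredients of the Allen--Wood proof. One point you leave vague is how the $B$-grading interacts with the reduction: grading compatibility of the intertwining operators is what confines the descent to a single $B$-homogeneous component at each step, and this is what makes the \emph{graded} $C_1$-cofiniteness hypothesis sufficient. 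If you were to write this out in full you would need to make that confinement explicit, but as a plan your outline is sound.
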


Choosing the module category to be abelian and combining
\cref{thm:sufHLZ,thm:sufcon} we obtain the following simplified sufficient
conditions.

\begin{cor}
  Let \(A\leq B\) be abelian groups, let $V$ be an \(A\)-graded
  \voa{} and let \(\Ccat\) be an abelian full subcategory of the
  category of all \(B\)-graded \(V\)-modules, which contains \(V\).
  Then the following conditions are sufficient for \(\Ccat\) to have the 
  \rgv{} category structures induced from intertwining operators described in \cref{thm:tenstr,thm:gvstr}.
  \begin{enumerate}
  \item All objects in \(\Ccat\) are discretely strongly graded and \(\Ccat\) is closed
    under taking contragredients.
    \label{itm:graddual}
  \item The non-semisimple part of $L_0$ acts nilpotently on any
    object in \(\Ccat\).
  \item There exists a vertex operator subalgebra \(\overline{V}\subset
    V^{(0)}\) such that all objects in \(\Ccat\) are graded \cfin{1} as
    \(\overline{V}\) modules.
    \label{itm:c1}
  \item For any objects $M_1, M_2 \in \Ccat$, every lower bounded submodule of
    \(\comp{\Mmod_1}{\Mmod_2}\) that is finitely generated by doubly
    homogeneous vectors is an object in \(\Ccat\).
    \label{itm:fus}
  \end{enumerate}
  \label{thm:simpsufconds}
\end{cor}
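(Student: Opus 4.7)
The plan is to verify that hypotheses \ref{itm:graddual}--\ref{itm:fus} imply each of the six sufficient conditions of \cref{thm:sufHLZ} together with the closure under contragredients required by \cref{thm:gvstr}, after which the braided monoidal structure of \cref{thm:tenstr} and the ribbon \gv{} structure of \cref{thm:gvstr} both follow directly with the stated twist and dualising object.

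First I would dispatch the structural conditions. The assumption that \(\Ccat\) is an abelian full subcategory of the category of \(B\)-graded \(V\)-modules containing \(V\) immediately provides closure under finite direct sums and under images of module homomorphisms, while hypothesis \ref{itm:graddual} supplies closure under contragredients and the fact that every object is strongly \(B\)-graded; together these verify conditions \ref{itm:sums} and \ref{itm:closure} of \cref{thm:sufHLZ} along with the contragredient-closure hypothesis of \cref{thm:gvstr}. Discrete strong gradation forces real conformal weights on all non-zero homogeneous spaces, so combining this with hypothesis~(2) verifies condition \ref{itm:real}. Hypothesis \ref{itm:fus} subsumes condition \ref{itm:cycmods}, since a cyclic submodule of \(\comp{M_1}{M_2}\) generated by a single \(B\)-homogeneous generalised \(L_0\) eigenvector is finitely generated by a doubly homogeneous vector, and the resulting object of \(\Ccat\) is automatically strongly graded by hypothesis \ref{itm:graddual}.

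For condition \ref{itm:prodsits} I would invoke \cref{thm:sufcon}: hypothesis \ref{itm:c1} furnishes a vertex operator subalgebra \(\overline{V}\subset V^{(0)}\) with respect to which every object of \(\Ccat\) is both discretely strongly graded and graded \cfin{1}, which are precisely the inputs of \cref{thm:sufcon}, and the conclusion of that theorem delivers the convergence and extension property for both products and iterates of any quadruple of grading compatible intertwining operators among objects of \(\Ccat\). The remaining condition \ref{itm:grading} requires that the universal intertwining operator \(\iop{M_1,M_2}\) be grading compatible; this follows from the observation that each \(B\)-homogeneous component of a logarithmic intertwining operator between discretely strongly graded modules is again a logarithmic intertwining operator, so that without loss the universal object can be realised inside the grading compatible subspace \(\grispc{-}{-}{-}\).

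The step I expect to require the most care is tracking the \(B\)-grading through the Huang-Lepowsky-Zhang construction of \(\comp{M_1}{M_2}\) and its submodules, so as to confirm that the cyclic modules appearing in condition \ref{itm:cycmods} are exactly the lower bounded submodules finitely generated by doubly homogeneous vectors addressed in hypothesis \ref{itm:fus}, and that the induced grading on such a submodule agrees with the one inherited from the compatibility space. Once these verifications are in place, \cref{thm:tenstr} supplies the braided monoidal structure with twist \(\ee^{2\pi\ii L_0}\), and \cref{thm:gvstr} immediately upgrades this to the claimed \rgv{} structure with dualising object \(V'\) and dualising functor given by taking contragredients.
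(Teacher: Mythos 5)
Your proposal is correct and takes essentially the same approach as the paper, which offers no separate proof beyond the remark immediately preceding the corollary that it follows by combining \cref{thm:sufHLZ} and \cref{thm:sufcon} once the module category is assumed abelian. You have simply made that combination explicit by matching each of the four hypotheses against the six conditions of \cref{thm:sufHLZ}, using \cref{thm:sufcon} to discharge the convergence-and-extension condition.
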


We have ordered the conditions in \cref{thm:simpsufconds} by how difficult
they are to verify in practice. Note in particular that Conditions
\ref{itm:graddual} -- \ref{itm:c1} are merely properties of the types of
modules one wishes to consider and make no reference to tensor products.

\subsection{Functors involving \voa{} module categories}

The monoidal structures of \voa{} module categories are a consequence of the
properties of intertwining operators. The following lemma illustrates how monoidal
functors from linear braided monoidal categories to \voa{} module categories
interact with intertwining operators.

\begin{lem} 
  Let $V$ be a vertex algebra with choice of module category $(\Ccat,\fuse,l,r,\alpha,c)$,
  admitting the 
  braided monoidal structure induced from intertwining operators described in
  \cref{thm:tenstr}.
  Let $(\Dcat,\otimes,l,r,\alpha,c)$ be a linear 
  braided monoidal category, $G: \Dcat \ra \Ccat$ a
  $\CC$-linear abelian functor and 
  \(\tstr_0\) a choice of morphism $\tstr_0: V \ra G(1_\Dcat)$.
  Then the following are equivalent.
    \begin{enumerate}
    \item There exists a natural transformation \(\tstr_2: G(-)\fuse G(-)\to
      G(-\otimes-)\) such that \((G,\tstr_0,\tstr_2)\) is a braided
     lax monoidal functor (lax here means that \(\tstr_0\) and \(\tstr_2\) are
     not required to be isomorphisms).
      \label{itm:ntransf}
    \item There exists a family of linear maps
      \begin{align}
        \btf : \mathrm{Hom}_{\Dcat}(\Mmod \otimes \Nmod, \Pmod) &\ra \ispc{G(\Mmod)}{G(\Nmod)}{G(\Pmod)}, \nonumber \\
        f &\mapsto \btfi{f}{z},
            \label{eq:Gmap}
      \end{align}
      for all \(\Mmod,\Nmod,\Pmod\in\Dcat\),
      satisfying the following conditions.
      \begin{itemize}
      \item Functoriality: For any $\Mmod,\Mmod^\prime, \Nmod,\Nmod^\prime, \Pmod,\Pmod^\prime\in\Dcat$ and any
      $g : \Mmod^\prime \ra \Mmod$, $h : \Nmod^\prime \ra \Nmod$, $k : \Pmod \ra \Pmod^\prime$,
      we have
      \begin{equation}
        \btfi{k \circ f \circ (g \otimes h)}{z} = G(k) \circ \btfi{f}{z} \circ (G(g) \otimes_\CC G(h)),
      \end{equation}
      where \(\otimes_\CC\) denotes the tensor product of complex vector
      spaces and linear maps. 
    \item Unitality: For any $\Nmod \in \Dcat$,
      \begin{align} \label{eq:units} \btfi{l_{\Nmod}}{\tstr_0(v),z}n &=
        Y_{G(\Nmod)}(v,z)n,
      \end{align}
      where \(Y_{G(\Nmod)}\) is the \voa{} field map on the module \(G(\Nmod)\).
    \item Skew symmetry: For any $\Mmod, \ \Nmod \in \Dcat$ and $m \in G(\Mmod)$,
      $n \in G(\Nmod)$,
      \begin{equation}\label{eq:skewsym}
        \btfi{c_{\Nmod, \Mmod}}{n,z} m = \ee^{zL_{-1}} \btfi{\id_{\Mmod \otimes \Nmod}}{m, \ee^{\ii\pi} z}n.
      \end{equation}
    \item Associativity: For any $\Mmod, \ \Nmod, \ \Pmod \in \Dcat$,
      $m \in G(\Mmod)$, $n \in G(\Nmod)$, $p \in G(\Pmod)$ and \(x_1,x_2\in\CC\) such
      that $|x_1|>|x_2|>0$ and
      $|x_2| > |x_1 - x_2|>0$,
      \begin{align} \label{eq:assoc}
        \btfi{\alpha_{\Mmod, \Nmod, \Pmod}}{m,x_1} \btfi{\id_{\Nmod \otimes \Pmod}}{n,x_2} p &=
        \btfi{\id_{(\Mmod \otimes \Nmod) \otimes \Pmod}}{\btfi{\id_{\Mmod\otimes \Nmod}}{m, x_1-x_2} n, x_2} p,\nonumber\\
      \end{align}
      where both sides of the equality are to be seen as elements of the
      algebraic completion of \(G((\Mmod \otimes \Nmod) \otimes \Pmod)\) and
      the associativity map $\alpha_{\Mmod, \Nmod, \Pmod}$ is to be seen as an element of
       $ \Homgrp{}{\Mmod \otimes (\Nmod \otimes
        \Pmod)}{(\Mmod \otimes \Nmod) \otimes \Pmod}$ so that
      $\btfi{\alpha_{\Mmod, \Nmod, \Pmod}}{z}$ is an intertwiner of type
      $\binom{G\brac*{(\Mmod \otimes \Nmod) \otimes \Pmod}}{G(\Mmod), G(\Nmod
        \otimes \Pmod)}$.
    \end{itemize}
    \label{itm:voptransf}
    \end{enumerate}
    The linear maps \(\btf\) and the natural transformation $\varphi_2$ uniquely
    characterise each other through the equality \(\btfi{\id_{\Mmod\otimes\Nmod}}{z}=\tstr_2(\Mmod,\Nmod)\circ\ifld{G(\Mmod),
      G(\Nmod)}{-}{z}\), where \(\Mmod,\Nmod\in \Dcat\) and where \(\iop{G(\Mmod), G(\Nmod)}\) is the
    intertwining operator of the universal property \eqref{eq:fusuprop} characterising
    \(G(\Mmod)\fuse G(\Nmod)\).
    \label{thm:btcvoa}
\end{lem}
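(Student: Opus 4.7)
The plan is to establish an explicit bijection between the data of \ref{itm:ntransf} and \ref{itm:voptransf} via the universal property \eqref{eq:fusuprop} of the HLZ fusion product, and then to verify that each coherence axiom for a braided lax monoidal functor translates, under this bijection, into the corresponding condition on $\btf$, using the explicit formulas of Proposition~\ref{thm:tenstr} for the unit, braiding and associativity isomorphisms of $\Ccat$.

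To construct the bijection, given $\tstr_2$ I would set for $\Mmod, \Nmod \in \Dcat$
\[
\btfi{\id_{\Mmod\otimes\Nmod}}{z} := \tstr_2(\Mmod,\Nmod) \circ \ifld{G(\Mmod),G(\Nmod)}{-}{z},
\]
which is an intertwiner of the stated type because post-composition with a $V$-module map preserves the intertwining-operator axioms, and then extend to a general $f : \Mmod\otimes\Nmod \to \Pmod$ by $\btfi{f}{z} := G(f) \circ \btfi{\id_{\Mmod\otimes\Nmod}}{z}$. Conversely, given $\btf$, the universal property \eqref{eq:fusuprop} applied to $\btfi{\id_{\Mmod\otimes\Nmod}}{z}$ yields a unique morphism $\tstr_2(\Mmod,\Nmod) : G(\Mmod)\fuse G(\Nmod) \to G(\Mmod\otimes\Nmod)$ satisfying the same defining equation. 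A short computation using the functoriality of $\fuse$ (together with uniqueness in the universal property) shows that the functoriality axiom on $\btf$ is equivalent to the naturality of $\tstr_2$ in both variables, so the two assignments are mutually inverse.

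The remaining axioms are then matched one by one. The left-unit coherence of the lax monoidal structure, evaluated on $\ifld{V,G(\Nmod)}{v}{z}n$ and unfolded through the characterisation \eqref{eq:unitmor} of $\ell_{G(\Nmod)}$, reduces to exactly the unitality condition \eqref{eq:units}; the right-unit coherence for a braided lax monoidal functor then follows automatically from the left-unit coherence together with the braiding compatibility and skew symmetry. The braiding-compatibility square between $\tstr_2$ and the braidings of $\Ccat$ and $\Dcat$, combined with the explicit braiding formula of Proposition~\ref{thm:tenstr}, translates directly into the skew-symmetry condition \eqref{eq:skewsym}.

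The main obstacle is the associativity axiom. Evaluating the monoidal-functor pentagon on an iterated expression $\ifld{G(\Mmod),G(\Nmod\otimes\Pmod)}{m}{x_1}\ifld{G(\Nmod),G(\Pmod)}{n}{x_2}p$ and invoking Proposition~\ref{thm:tenstr}.\ref{itm:assoccond} to exchange this iterate with the corresponding product of intertwiners on the domain $|x_1|>|x_2|>|x_1-x_2|>0$ converts the pentagon into \eqref{eq:assoc}. The subtlety is that the categorical associator $A_{G(\Mmod),G(\Nmod),G(\Pmod)}$ appearing in the pentagon is defined only as the limit of the family $A^{x_1,x_2}_{G(\Mmod),G(\Nmod),G(\Pmod)}$ as $x_2 \to 1$ and $x_1-x_2 \to 1$ along positive reals, whereas \eqref{eq:assoc} is an identity on the full convergence domain; closing this gap requires the uniqueness of analytic continuation of matrix coefficients of iterated intertwining operators in the complex variables $x_1, x_2$. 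Once this matching is complete, both the equivalence of \ref{itm:ntransf} and \ref{itm:voptransf} and the defining equation claimed at the end of the lemma are immediate from the construction.
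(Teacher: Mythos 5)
Your proposal follows essentially the same approach as the paper's own proof: construct the bijection between $\btf$ and $\tstr_2$ via the universal property \eqref{eq:fusuprop}, show that naturality of $\tstr_2$ corresponds to functoriality of $\btf$, and then match the left-unit, braiding, and associativity-hexagon constraints one by one with unitality, skew symmetry, and associativity of $\btf$ using the explicit formulas from \cref{thm:tenstr}, noting that the right-unit constraint is implied. Your explicit flagging of the analytic-continuation subtlety for the associator (which the paper treats implicitly as part of the HLZ framework) is a correct and useful observation but does not change the structure of the argument.
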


\begin{rmk}
  By the functoriality condition above, the linear maps \(\btf\) are
  completely determined by their values on \(\id_{\Mmod \otimes \Nmod}\in
  \mathrm{Hom}_{\Dcat}(\Mmod \otimes \Nmod, \Mmod\otimes \Nmod)\). If
  \(\btf_{\id_{\Mmod \otimes \Nmod}}\in
  \grispc{G(\Mmod)}{G(\Nmod)}{G(\Mmod\otimes\Nmod)}\) for all \(\Mmod,\Nmod\in
  \Dcat\), then all \(\btf_{f}\) will be graded intertwining operators, since
  all morphisms in \(\Ccat\) preserve the grading.
  Further,
  note that for each of the equations \eqref{eq:units}, \eqref{eq:skewsym},
    \eqref{eq:assoc}, the \lhs{s} and \rhs{s} are respectively in the same
    space of intertwining operators. If these spaces of intertwining operators
    are finite dimensional, then it is sufficient to verify the equation for
    only a finite number of coefficients. In particular, if the intertwining
    operator space is one dimensional then it is 
    sufficient to compare the leading coefficients. 
\end{rmk}

\begin{proof}
  Note that if the family of maps \(\btf\) in \eqref{eq:Gmap} exists, then since
  \(\id_{\Mmod\otimes \Nmod}\in \Homgrp{\Dcat}{\Mmod\otimes
    \Nmod}{\Mmod\otimes \Nmod}\), it follows that \(\btf_{\id_{\Mmod\otimes
      \Nmod}}\) is an intertwining operator of type
  \(\ityp{G(\Mmod)}{G(\Nmod)}{G(\Mmod\otimes \Nmod)}\). Let
  \(\iop{G(\Mmod),G(\Nmod)}\) be the universal intertwining operator of type
  \(\ityp{G(\Mmod)}{G(\Nmod)}{G(\Mmod)\fuse G(\Nmod)}\) coming from
  universal property \eqref{eq:fusuprop} characterising
  \(G(\Mmod)\fuse G(\Nmod)\). This universal property further implies the
  existence and uniqueness of a family of morphisms
  \(\tstr_2(\Mmod,\Nmod)\in \Homgrp{\Ccat}{G(\Mmod)\fuse
    G(\Nmod)}{G(\Mmod\otimes \Nmod)}\) satisfying
  \(\btfi{\id_{\Mmod\otimes\Nmod}}{z}=\tstr_2(\Mmod,\Nmod)\circ\ifld{G(\Mmod),G(\Nmod)}{-}{z}\).
  Conversely given a family of morphisms \(\tstr_2(\Mmod,\Nmod):G(\Mmod)\fuse
    G(\Nmod) \to G(\Mmod\otimes \Nmod)\), we can define
    \(\btf_{\id_{\Mmod\otimes\Nmod}}\) via
    \(\btf_{\id_{\Mmod\otimes\Nmod}}\brac*{z}=\tstr_2(\Mmod,\Nmod)\circ\iop{G(\Mmod),G(\Nmod)}\).

  We show the logical equivalence of Assertions \ref{itm:ntransf} and
  \ref{itm:voptransf} by respectively showing the equivalence of naturality of
  \(\tstr_2\) and functoriality of \(\btf\); the left unit
  square constraint for \(\tstr_2\) commuting and the unitality of \(\btf\); the
  braiding square constraint for \(\tstr_2\) commuting and the skew symmetry of
  \(\btf\); and the associativity hexagon constraint for \(\tstr_2\) commuting and the associativity of
  \(\btf\). Note that the right unit square constraint does not need to be
  verified, since it is implied by the left unit and braiding.

  Assume \(\btf\) is functorial, and $g : \Mmod^\prime \ra \Mmod$, $h : \Nmod^\prime \ra
  \Nmod$, then
  \begin{multline}
    \brac*{G(g\otimes h)}\circ \tstr_2(\Mmod^\prime,\Nmod^\prime)\circ \iop{G(\Mmod),G(\Nmod)}=
    G(g\otimes h)\circ \btf_{\id_{\Mmod^\prime\otimes\Nmod^\prime}}=\btf_{g\otimes
      h}=\btf_{\id_{\Mmod\otimes\Nmod}}\circ
    G(g)\otimes_\CC G(h)\\=\tstr_2(\Mmod,\Nmod)\circ \iop{G(\Mmod),G(\Nmod)}\circ
    G(g)\otimes_\CC G(h)=
    \tstr_2(\Mmod,\Nmod)\circ\brac*{G(g)\fuse G(h)} \iop{G(\Mmod),G(\Nmod)}.
  \end{multline}
  Thus \(\brac*{G(g\otimes h)}\circ \tstr_2(\Mmod^\prime,\Nmod^\prime)=
  \tstr_2(\Mmod,\Nmod)\circ\brac*{G(g)\fuse G(h)}\) and hence \(\tstr_2\) is natural.

  Conversely, assume \(\tstr_2\) is natural.
  As noted above, We
  first define \(\btf\) on identity morphisms \(\id_{\Mmod\otimes\Nmod}\)
  by \(\btf_{\id_{\Mmod\otimes\Nmod}}=\tstr_2(\Mmod,\Nmod)\circ
  \iop{G(\Mmod),G(\Nmod)}\) and extend functorially, that is for
  \(f\in\Homgrp{\Dcat}{\Mmod\otimes\Nmod}{\Pmod}\),
  $g : \Mmod^\prime \ra \Mmod$, $h : \Nmod^\prime \ra \Nmod$, $k : \Pmod \ra \Pmod^\prime$,
  \begin{equation}
    \btfi{k\circ f\circ\brac*{g\otimes h}}{z}=G(k)\circ
    G(f)\circ\btfi{\id_{\Mmod\otimes\Nmod}}{z}\circ\brac*{G(g)\otimes_\CC
      G(h)} .
    \label{eq:GTformula}
  \end{equation}
  This is well defined if and only if
  \(G(g\otimes h)\circ \btfi{\id_{\Mmod^\prime\otimes \Nmod^\prime}}{z}
  =\btfi{\id_{\Mmod\otimes \Nmod}}{z}\circ G(g)\otimes_\CC G(h)\).
  Consider
  \begin{align}
    &G(g\otimes h)\circ\btfi{\id_{\Mmod^\prime\otimes\Nmod^\prime}}{z}=
        G(g\otimes
      h)\circ\tstr_2(\Mmod^\prime,\Nmod^\prime)\circ\ifld{G(\Mmod^\prime),G(\Nmod^\prime)}{- }{z}
      =
      \tstr_2(\Mmod,\Nmod)\circ\brac*{G(g)\fuse G(h)}\circ
      \ifld{G(\Mmod^\prime),G(\Nmod^\prime)}{- }{z}\nonumber\\
    &\qquad=
      \tstr_2(\Mmod,\Nmod)\circ\ifld{G(\Mmod),G(\Nmod)}{-}{z}\circ G(g)\otimes_\CC G(h) ,
  \end{align}
  where the second equality uses that
  \(\tstr_2\) is natural and the third uses the definition of the tensor product of
  morphisms in \(\Ccat\).
  Hence the formula \eqref{eq:GTformula} is well defined and \(\btf\) is
  functorial.
  For the remainder of the proof we will assume that \(\tstr_2\) natural and
  hence also that \(\btf\) is functorial.
      
  We next show the logical equivalence of the left unit constraint for
  \((G,\tstr_0,\tstr_2)\) and the unitality of \(\btf\).
   Consider the following squares.
    \begin{equation}
      \begin{tikzcd}[row sep=10mm, column sep=20mm]
        V \otimes_\CC G(\Mmod)\arrow[d,"\tstr_0\otimes_\CC\id_{G(M)}"]\arrow[r,"\iop{V,G(M)}"]
        &V \fuse G(\Mmod) \arrow[r, "l_{G(\Mmod)}"] \arrow[d, "\tstr_0 \fuse \id_{G(M)}"] 
        &  G(\Mmod)  \\
        G(1_\Dcat) \otimes_\CC G(\Mmod)\arrow[r,"\iop{G(1_\Dcat),G(M)}"]
        &G(1_\Dcat) \fuse G(\Mmod) \arrow[r, "\tstr_{2}(1_{\Dcat}{,}\Mmod)"]
        & G(1_\Dcat \otimes \Mmod) \arrow[u, "G(l_\Mmod)"]
      \end{tikzcd}
      \label{eq:unitsquare}
    \end{equation}
    Note that we have suppressed formal variables in the 
    images of intertwining operators for visual clarity.
    The left square commutes by the definition (see \cref{thm:gvstr}) of how the functor \(\fuse\)
    is evaluated on pairs of morphisms in \(\Ccat\). Consider the two sequences
    of equalities
    \begin{subequations}
      \begin{align}
        &G(l_\Mmod)\circ\tstr_2(1_\Dcat,\Mmod)\circ\brac*{\tstr_0\fuse\id_{G(\Mmod)}}\brac*{
        \ifld{V,G(\Mmod)}{v}{z}m}=
        G(l_\Mmod)\circ\tstr_2(1_\Dcat,\Mmod)\brac*{
                               \ifld{G(1_\Dcat),G(\Mmod)}{\tstr_0(v)}{z}m}                \nonumber\\
        &\qquad=G(l_\Mmod)\circ \btfi{\id_{1_\Dcat\otimes
          \Mmod}}{\tstr_0(v),z}m
          =\btfi{l_\Mmod}{\tstr_0(v),z}m,\label{eq:unita}\\
        &l_{G(M)}\brac*{\ifld{V,G(M)}{v}{z}m}=Y_{G(\Mmod)}(v,z)m.\label{eq:unitb}
      \end{align}
    \end{subequations}
    The first equality of \eqref{eq:unita} follows from the definition of \(\fuse\)
    evaluated on pair of morphisms, the second from the identity
    relating \(\btf\) and \(\tstr_2\), the third from the functoriality of
    \(\btf\), while \eqref{eq:unitb} is the defining property of left unit morphisms in \(\Ccat\).
      If we assume that \(\btf\) is unital, then the last expressions of
      \eqref{eq:unita} and \eqref{eq:unitb} are equal and hence the first
      terms must be too, implying the commutativity of the right square in
      \eqref{eq:unitsquare}.
      Conversely, if we assume that the right square in
      \eqref{eq:unitsquare} commutes, then the first expressions of \eqref{eq:unita} and \eqref{eq:unitb} are equal and hence the last
      terms must be too. Thus \(\btf\) is unital.

      We next show the logical equivalence of the braiding constraint for
      \((G,\tstr_0,\tstr_2)\) and the skew symmetry of \(\btf\).
      Consider the following squares.
      \begin{equation}
        \begin{tikzcd}[row sep=10mm, column sep=25mm]
          G(\Mmod)\otimes_\CC G(\Nmod)\arrow[r,"\iop{G(\Mmod),G(\Nmod)}"]\arrow[d,"P"]
          &G(\Mmod) \fuse G(\Nmod) \arrow[d, "c_{G(\Mmod),G(\Nmod)}"] \arrow[r, "\tstr_2(\Mmod{,}\Nmod)"] 
          & {G(\Mmod \otimes \Nmod)} \arrow[d, "G(c_{\Mmod,\Nmod})"]\\
          G(\Nmod)\otimes_\CC G(\Mmod)\arrow[r,"\iop{G(\Nmod),G(\Mmod)}"]
          &G(\Nmod) \fuse G(\Mmod) \arrow[r, "\tstr_2(\Nmod{,}\Mmod)"] 
          & {G(\Nmod \otimes \Mmod)} \label{eq:braid}
        \end{tikzcd}
      \end{equation}
      where \(P\) is the tensor flip.
      We have again suppressed formal variables. The left square
      commutes by the definition (see \cref{thm:gvstr}) of braiding for intertwining
      operators.
      Consider the two sequences
      of equalities
      \begin{subequations}
        \begin{align}
          &\tstr_2(\Nmod,\Mmod)\circ c_{G(\Mmod),G(\Nmod)}\brac*{
           \ifld{G(\Mmod),G(\Nmod)}{m,z}n}=
           \tstr_2(\Nmod,\Mmod)\brac*{\ee^{zL_{-1}}\ifld{G(\Nmod),G(\Mmod)}{n,\ee^{\ii\pi}z}m}\nonumber\\
        &\qquad=\ee^{zL_{-1}}\tstr_2(\Nmod,\Mmod)\brac*{\ifld{G(\Nmod),G(\Mmod)}{n,\ee^{\ii\pi}z}m}
          \ee^{zL_{-1}}\btfi{\id_{\Nmod\otimes
          \Mmod}}{n,\ee^{\ii\pi}z}m , \label{eq:skewa}\\
          &G(c_{\Mmod,\Nmod})\circ\tstr_2(\Mmod,\Nmod)\circ\ifld{G(\Mmod),G(\Nmod)}{m}{z}n=
          G(c_{\Mmod,\Nmod})\circ\btfi{\id_{\Mmod\otimes \Nmod}}{m,z}n=
          \btfi{c_{\Mmod,\Nmod}}{m,z}n . \label{eq:skewb}
        \end{align}
      \end{subequations}
      As for the unitality argument above, the equalities follow from the
      defining properties of the tensor structures in \(\Ccat\) and the
      functoriality of \(\btf\) or naturality of \(\tstr_2\). Note that \(\tstr_2(\Nmod,\Mmod)\) is a module map and
      hence commutes with \(L_{-1}\).
      If we assume that \(\btf\) is skew symmetric, then the last expressions
      of \eqref{eq:skewa} and \eqref{eq:skewb} are equal and hence the first
      are too. Thus the right square in \eqref{eq:braid} commutes. Conversely,
      if the right square in \eqref{eq:braid} commutes, then the first
      expressions of \eqref{eq:skewa} and \eqref{eq:skewb} are equal and hence the last
      are too, implying the skew symmetry of \(\btf\).
      
      Finally we show the equivalence of the associativity hexagon condition for $\tstr_2$, and
      the associativity of \(\btf\). Consider the following
      triangle and hexagon.
      \begin{equation}
        \begin{tikzcd}[row sep=7mm, column sep=11mm]
          G(\Mmod) \otimes_\CC G(\Nmod) \otimes_\CC G(\Pmod)
          \arrow[r,"\iop{}\brac*{\ }\iop{}\brac*{\ }"]
          \arrow[dr,"\iop{}\brac*{\iop{}\brac*{\ }}"]
          &G(\Mmod) \fuse \brac*{G(\Nmod) \fuse G(\Pmod)}
          \arrow[d,"\alpha_{\Ccat}"] \arrow[r, "\id \fuse \tstr_2"] &
          G(\Mmod) \fuse G(\Nmod \otimes \Pmod)
          \arrow[r, "\tstr_2"] &
          G\brac*{\Mmod \otimes \brac{\Nmod \otimes \Pmod}}
          \arrow[d, "G\brac*{\alpha_\Dcat}"]\\
          &\brac*{G(\Mmod) \fuse G(\Nmod)} \fuse G(\Pmod)
          \arrow[r, "\tstr_2 \fuse \id"]  &
          G\brac*{\Mmod \otimes \Nmod} \fuse G(\Pmod)
          \arrow[r, "\tstr_2"] &
          G\brac*{(\Mmod \otimes \Nmod) \otimes \Pmod} 
        \end{tikzcd}
        \label{eq:assocdiag}
      \end{equation}
      Here \(\iop{}\brac*{\ }\iop{}\brac*{\ }\) and
      \(\iop{}\brac*{\iop{}\brac*{\ }}\) denote the obvious product and
      iterate of intertwining operators and we have suppressed the objects
      labelling the natural transformations \(\tstr_2, \alpha_\Ccat,
      \alpha_\Dcat\).
      The left triangle
      commutes by the definition (see \cref{thm:gvstr}) of associativity for intertwining
      operators.
      Let \(m\in G(\Mmod)\), \(n\in G(\Nmod)\), \(p\in G(\Pmod)\), \(x_1,x_2\in \CC\),
      \(|x_1|>|x_2|>0\), \(|x_2|>|x_1-x_2|>0\) and
      consider the two sequences
      of equalities
      \begin{subequations}
        \begin{align}
          &G(\alpha_{\Mmod,\Nmod,\Pmod})\circ\tstr_2(\Mmod,\Nmod\otimes\Pmod)\circ\brac*{\id_{G(M)}\fuse
          \tstr_2(\Nmod,\Pmod)}\circ\ifld{G(\Mmod),G(\Nmod)\fuse
          G(\Pmod)}{m}{x_1}\ifld{G(\Nmod),G(\Pmod)}{n}{x_2}p \nonumber\\
        &\qquad=
        G(\alpha_{\Mmod,\Nmod,\Pmod})\circ\tstr_2(\Mmod,\Nmod\otimes\Pmod)\circ
        \ifld{G(\Mmod),G(\Nmod\otimes
          \Pmod)}{m}{x_1}\tstr_2(\Nmod,\Pmod)\circ\ifld{G(\Nmod),G(\Pmod)}{n}{x_2}p
          \nonumber\\
        &\qquad=
          G(\alpha_{\Mmod,\Nmod,\Pmod})\circ\btfi{\id_{\Mmod\otimes(\Nmod\otimes\Pmod)}}{m,x_1}
          \btfi{\id_{\Nmod\otimes\Pmod}}{n,x_2}p=
          \btfi{\alpha_{\Mmod,\Nmod,\Pmod}}{m,x_1}
          \btfi{\id_{\Nmod\otimes\Pmod}}{n,x_2}p,\label{eq:assoca}\\
          &\tstr_2(\Mmod\otimes\Nmod,\Pmod)\circ\brac*{\tstr_2(\Mmod,\Nmod)\fuse\id_{G(\Pmod)}}\circ
        \alpha_{G(\Mmod),G(\Nmod),G(\Pmod)}\circ \ifld{G(\Mmod),G(\Nmod)\fuse
            G(\Pmod)}{m}{x_1}\ifld{G(\Nmod),G(\Pmod)}{n}{x_2}p\nonumber\\
          &\qquad=
          \tstr_2(\Mmod\otimes\Nmod,\Pmod)\circ\brac*{\tstr_2(\Mmod,\Nmod)\fuse\id_{G(\Pmod)}}\circ
          \ifld{G(\Mmod\otimes\Nmod),G(\Pmod)}
            {\ifld{G(\Mmod),G(\Nmod)}{m}{x_1-x_2}n}{x_2}p\nonumber\\
          &\qquad=
          \tstr_2(\Mmod\otimes\Nmod,\Pmod)\circ
            \ifld{G(\Mmod\otimes\Nmod),G(\Pmod)}
            {\tstr_2(\Mmod,\Nmod)\circ\ifld{G(\Mmod),G(\Nmod)}{m}{x_1-x_2}n}{x_2}p\nonumber\\
          &\qquad=\btfi{\id_{(\Mmod\otimes\Nmod)\otimes\Pmod}}{\btfi{\id_{\Mmod\otimes\Nmod}}{m,x_1-x_2}n,x_2}p.
            \label{eq:assocb}
        \end{align}
      \end{subequations}
      As with the arguments for the previous commutative diagrams, the
      equivalence of \(\btf\) being associative and the hexagon in
      \eqref{eq:assocdiag} commuting follows by recognising the equality of either
      the first or last terms of \eqref{eq:assoca} and \eqref{eq:assocb}.

      Assertions \ref{itm:ntransf} and \ref{itm:voptransf} are therefore equivalent.
  \end{proof}
  
  \begin{cor}
    Let \(\Ccat,\ \Dcat,\ G\) and \(\tstr_0\) be as in \cref{thm:btcvoa}. Further,
    assume \(\tstr_0\) is an isomorphism and that
    there exists a natural transformation \(\tstr_2:G(\Mmod)\fuse
      G(\Nmod)\to G(\Mmod \otimes\Nmod)\)
    such that \((G,\tstr_0,\tstr_2)\) is a  braided monoidal functor.
    Then \(\tstr_2\) is a natural isomorphism (equivalently
    \((G,\tstr_0,\tstr_2)\) is a strong  braided monoidal functor) if either of
    the following sets of sufficient conditions are satisfied.
    \begin{enumerate}
    \item The unique morphism \(f_{\Mmod,\Nmod}\in  \Homgrp{\Ccat}{G(\Mmod)\fuse
        G(\Nmod)}{G(\Mmod\otimes \Nmod)}\) satisfying
  \(\btfi{\id_{\Mmod\otimes\Nmod}}{z}=f_{\Mmod,\Nmod}\circ\ifld{G(\Mmod),G(\Nmod)}{-}{z}\)
  is an isomorphism for all \(\Mmod,\Nmod\in \Dcat\).
    \item For all \(\Mmod,\Nmod\in \Dcat\), the objects \(G(\Mmod)\fuse
      G(\Nmod)\) and \(G(\Mmod \otimes\Nmod)\) are isomorphic, and
      \(\btfi{\id_{\Mmod\otimes\Nmod}}{z}\) is a surjective intertwining
      operator.
      \label{itm:intertweq}
    \end{enumerate}
    If in addition \(\Dcat\) is \rgv{} with dualising object \(K_\Dcat\) and
    twist \(\theta_\Dcat\), then a braided monoidal equivalence
    \((G,\tstr_0,\tstr_2)\) is a \rgv{} equivalence, if and only if
    \begin{equation}
      G(K_\Dcat)\cong V^\prime \qquad \text{and}\qquad
      G(\theta_\Dcat)=\ee^{2\pi\ii L_0}\vert_{G(-)}.
    \end{equation}
    \label{thm:tstrcor}
  \end{cor}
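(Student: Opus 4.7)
The plan is to address the two sufficient conditions and the ribbon GV characterisation in turn.

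For the first sufficient condition, one invokes the uniqueness clause of \lemref{thm:btcvoa}, which states that $\tstr_2(\Mmod,\Nmod)$ is the unique morphism satisfying
\begin{equation*}
  \btfi{\id_{\Mmod \otimes \Nmod}}{z} = \tstr_2(\Mmod,\Nmod)\circ\ifld{G(\Mmod),G(\Nmod)}{-}{z}.
\end{equation*}
This is exactly the property used to define $f_{\Mmod,\Nmod}$, so $\tstr_2(\Mmod,\Nmod) = f_{\Mmod,\Nmod}$ and $\tstr_2$ is pointwise an isomorphism whenever $f_{\Mmod,\Nmod}$ is.

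For the second sufficient condition, I would again use the factorisation displayed above. Surjectivity of the intertwining operator $\btfi{\id_{\Mmod\otimes\Nmod}}{z}$ means that the span of the images of its coefficient maps is all of $G(\Mmod\otimes \Nmod)$, and this surjectivity is inherited by $\tstr_2(\Mmod,\Nmod)$, making it a surjective morphism in $\Ccat$. The key step is then a dimension count that genuinely uses the strong grading of $\Ccat$: both $G(\Mmod)\fuse G(\Nmod)$ and $G(\Mmod\otimes \Nmod)$ are strongly graded, and since they are assumed isomorphic as objects their finite-dimensional homogeneous components have equal dimensions. As every morphism in $\Ccat$ preserves the grading, $\tstr_2(\Mmod,\Nmod)$ restricts to a surjective linear map between finite-dimensional vector spaces of equal dimension on each such component, and is therefore bijective there. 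This forces $\tstr_2(\Mmod,\Nmod)$ to be an isomorphism of modules. This dimension-counting argument is the one mildly delicate point of the proof.

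Finally, for the ribbon GV equivalence characterisation, by \thmref{thm:gvstr} the category $\Ccat$ is ribbon GV with dualising object $V^{\prime}$ and twist $\theta_\Ccat = \ee^{2\pi\ii L_0}$. By \cref{def:rgveq}, a braided monoidal equivalence $G\colon\Dcat\to\Ccat$ is a ribbon GV equivalence precisely when it sends the dualising object and twist of $\Dcat$ to those of $\Ccat$, that is $G(K_\Dcat)\cong V^{\prime}$ and $G(\theta_\Dcat) = \ee^{2\pi\ii L_0}\vert_{G(-)}$. This is exactly the stated biconditional, so no further argument is required beyond invoking the definition.
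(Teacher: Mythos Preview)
The paper does not provide an explicit proof of this corollary; it is stated immediately after \cref{thm:btcvoa} and treated as a direct consequence. Your argument correctly spells out the implicit reasoning: for condition~(1) you invoke the uniqueness clause relating $\tstr_2$ and $\btf$ to identify $f_{\Mmod,\Nmod}=\tstr_2(\Mmod,\Nmod)$, and for the \rgv{} characterisation you simply unpack \cref{def:rgveq} against the dualising object and twist identified in \cref{thm:gvstr}.

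Your treatment of condition~(2) is also the natural one, but note one implicit assumption. The dimension-counting step requires that the doubly homogeneous spaces of $G(\Mmod)\fuse G(\Nmod)$ and $G(\Mmod\otimes\Nmod)$ be finite dimensional, i.e.\ that the objects of $\Ccat$ are strongly graded. This is not literally among the hypotheses of \cref{thm:btcvoa} (which only assumes the conclusions of \cref{thm:tenstr}), though it is part of the standard HLZ setup recorded in \cref{thm:sufHLZ} and \cref{thm:simpsufconds} and is certainly the intended context. With that caveat, your argument that surjectivity of $\btfi{\id_{\Mmod\otimes\Nmod}}{z}$ forces surjectivity of $\tstr_2(\Mmod,\Nmod)$, followed by the grade-wise linear-algebra argument, is correct and is exactly what the paper leaves to the reader.
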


Due to the categories and functors above being abelian, the functor \(G\) and its
monoidal structure morphisms, or equivalently the family of linear maps
\(\btf\), distribute over direct sums. It is therefore sufficient to only
consider indecomposable modules \(\Mmod, \Nmod\) and \(\Pmod\) when verifying
the properties of \(\btf\). In practice general indecomposable \voa{} modules
can still be untractably complicated and so it would be convenient to not have
to consider all. For sufficiently well behaved functors and categories one can,
for example, restrict one's attention to projective modules only (and if
projective modules are sufficient, then indecomposable projectives are too), as we show in the
next proposition. This result will not be needed for the categories we
consider later in \cref{sec:freebosons}, 
because they will all be semisimple, however, the authors believe the
result to be of sufficient independent interest to warrant inclusion.
\begin{prop} \label{thm:projsufficient}
  Let $\Ccat$ and $\Dcat$ be abelian braided tensor categories
  both satisfying that there are  sufficiently many projectives, 
  that the tensor
  products are biexact and that projectives form a tensor ideal. 
  Let \(\Ccat^p\) and \(\Dcat^p\), respectively, be the full subcategories of
  projective objects of \(\Ccat\) and \(\Dcat\).
  Let $F : \Ccat \to \Dcat$ be an exact functor satisfying that the image of
  any projective object is projective and admitting an isomorphism $\tstr_0 :
  1_\Dcat  \to F(1_\Ccat)$. 
  The functors \(F\) and \(\otimes_\Ccat\) can therefore be restricted to \(\Ccat^p\)
  to obtain functors to \(\Dcat^p\).
  If there exists a natural isomorphism \(\tstr_2^p :
  F(-|_p) \otimes_\Dcat F(-|_p) \to F \circ\brac*{-|_p \otimes_{\Ccat} -|_p}\),
  where \(-|_p\) denotes the restriction to \(\Ccat^p\),
  such that for all $\Mmod, \ \Nmod, \ \Pmod \in
  \Ccat^p$, and a projective cover \(\Qmod \stackrel{\pi}{\rightarrow}  1_{\Ccat}\) of the unit object, the four diagrams below commute,
  then \(\tstr_2^p\) admits a unique extension \(\tstr_2\) to
  \(\Ccat\) and \((F,\tstr_0,\tstr_2)\) is a braided lax tensor functor. If
  \(\tstr_2^p\) is, in addition, a natural isomorphism, then \(\tstr_2\) is
  too and \((F,\tstr_0,\tstr_2)\) is a strong braided tensor functor.\\
  \vspace{1mm}
  \begin{equation}
    \centering
    \begin{tikzcd}[row sep=10mm, column sep=20mm]
      1_\Dcat \otimes_\Dcat F(\Mmod) \arrow[d, "l_{F(\Mmod)}"] 
      &  F(1_\Ccat) \otimes_\Dcat F(\Mmod) \arrow[l, "\tstr_0^{-1} \otimes_\Dcat \id_{F(\Mmod)}"] 
      & F(\Qmod) \otimes_\Dcat F(\Mmod) \arrow[l,"F(\pi_1) \otimes_\Dcat \id_{F(\Mmod)}"] \arrow[d, "\tstr_2^p\brac*{\Qmod, \Mmod}"] \\
      F(\Mmod) 
      & F(1_\Ccat \otimes_\Ccat \Mmod)  \arrow[l, "F(l_{\Mmod})"] 
      & F(\Qmod \otimes_\Ccat \Mmod)  \arrow[l,"F\brac*{\pi_1 \otimes_\Ccat \id_{F(\Mmod)}}"]
    \end{tikzcd}
    \label{eq:lunitrect}
  \end{equation}
  \vspace{1mm}
  \begin{equation}
    \centering
    \begin{tikzcd}[row sep=10mm, column sep=20mm]
      F(\Mmod) \otimes_\Dcat 1_\Dcat  \arrow[d, "r_{F(\Mmod)}"] 
      &  F(\Mmod) \otimes_\Dcat  F(1_\Ccat)\arrow[l, "\id_{F(\Mmod)} \otimes_\Dcat  \tstr_0^{-1}"] 
      &  F(\Mmod) \otimes_\Dcat F(\Qmod) \arrow[l,"\id_{F(\Mmod)} \otimes_\Dcat F(\pi_1) "] \arrow[d, "\tstr_2^p\brac*{\Mmod , \Qmod}"] \\
      F(\Mmod) 
      & F(\Mmod \otimes_\Ccat 1_\Ccat )  \arrow[l, "F(r_{\Mmod})"] 
      & F(\Mmod \otimes_\Ccat  \Qmod )  \arrow[l,"F\brac*{\id_{F(\Mmod)} \otimes_\Ccat \pi_1} "]
    \end{tikzcd}
    \label{eq:runitrect}
  \end{equation}
	\vspace{1mm}
	\begin{equation}
          \centering
          \begin{tikzcd}[row sep=10mm, column sep=11mm]
            F(\Mmod) \otimes_\Dcat F(\Nmod) \arrow[r, "c_{F(\Mmod), F(\Nmod)}"] \arrow[d, "\tstr_2^p\brac*{\Mmod, \Nmod}"] 
            & F(\Nmod) \otimes_\Dcat F(\Mmod) \arrow[d, "\tstr_2^p\brac*{\Nmod, \Mmod}"] \\
            {F(\Mmod \otimes_\Ccat \Nmod)} \arrow[r, "F\brac*{c_{\Mmod, \Nmod}}"] 
            & {F(\Nmod \otimes_\Ccat \Mmod)} 
          \end{tikzcd}%
          \hspace{-4mm}
          \begin{tikzcd}[row sep=7mm, column sep=17mm]
            F(\Mmod) \otimes_\Dcat \brac*{F(\Nmod) \otimes_\Dcat F(\Pmod)} \arrow[r, "\alpha_{F(\Mmod), F(\Nmod), F(\Pmod)}"] \arrow[d, "\id_{F(\Mmod)} \otimes_\Dcat \tstr_2^p\brac*{\Nmod, \Pmod}"] &
            \brac*{F(\Mmod) \otimes_\Dcat F(\Nmod)} \otimes_\Dcat F(\Pmod) \arrow[d, "\tstr_2^p\brac*{\Mmod, \Nmod} \otimes_\Dcat \id_{F(\Pmod)}"]  \\
            F(\Mmod) \otimes_\Dcat F(\Nmod \otimes_\Ccat \Pmod)  \arrow[d, "\tstr_2^p\brac*{\Mmod, \Nmod \otimes_\Ccat \Pmod}"] &
            F\brac*{\Mmod \otimes_\Ccat \Nmod} \otimes_\Dcat F(\Pmod) \arrow[d, "\tstr_2^p\brac*{\Mmod \otimes_\Ccat \Nmod, \Pmod}"] \\
            F\brac*{\Mmod \otimes_\Ccat \brac{\Nmod \otimes_\Ccat \Pmod}} \arrow[r, "F\brac*{\alpha_{\Mmod, \Nmod, \Pmod}}"] &
            F\brac*{(\Mmod \otimes_\Ccat \Nmod) \otimes_\Ccat \Pmod} 
          \end{tikzcd}
          \label{eq:brassocassum}
        \end{equation}
\end{prop}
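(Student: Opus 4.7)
The plan is to extend $\tstr_2^p$ from the subcategory $\Ccat^p$ of projective objects to all of $\Ccat$ by means of projective presentations, exploiting that both $F$ and the tensor products are right exact and that projectives form a tensor ideal. For each $X\in\Ccat$ fix a projective presentation $P_1^X\xrightarrow{d_X}P_0^X\xrightarrow{\epsilon_X}X\to 0$ with all $P_i^X\in\Ccat^p$. Given $\Mmod,\Nmod\in\Ccat$, biexactness of $\otimes_\Ccat$ together with the tensor-ideal property for $\Ccat^p$ yields a right exact sequence
\[
(P_1^\Mmod\otimes_\Ccat P_0^\Nmod)\oplus(P_0^\Mmod\otimes_\Ccat P_1^\Nmod)\longrightarrow P_0^\Mmod\otimes_\Ccat P_0^\Nmod\xrightarrow{\epsilon_\Mmod\otimes\epsilon_\Nmod}\Mmod\otimes_\Ccat\Nmod\to 0,
\]
whose entries except for the last lie in $\Ccat^p$. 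Applying the exact functor $F$ preserves right exactness, while biexactness of $\otimes_\Dcat$ and exactness of $F$ produce the parallel right exact sequence
\[
(F(P_1^\Mmod)\otimes_\Dcat F(P_0^\Nmod))\oplus(F(P_0^\Mmod)\otimes_\Dcat F(P_1^\Nmod))\longrightarrow F(P_0^\Mmod)\otimes_\Dcat F(P_0^\Nmod)\longrightarrow F(\Mmod)\otimes_\Dcat F(\Nmod)\to 0.
\]
Naturality of $\tstr_2^p$ endows the two leftmost verticals with commuting squares, and the universal property of cokernels then produces a unique morphism $\tstr_2(\Mmod,\Nmod):F(\Mmod)\otimes_\Dcat F(\Nmod)\to F(\Mmod\otimes_\Ccat\Nmod)$ filling the rightmost column.

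Standard lifting of morphisms between objects of $\Ccat$ to chain maps between projective presentations, together with uniqueness from the cokernel property, shows that $\tstr_2(\Mmod,\Nmod)$ is independent of the chosen presentations, is natural in both arguments, and agrees with $\tstr_2^p$ on projective inputs (use the trivial presentation $0\to P\xrightarrow{\id}P\to 0$). For the braiding square and the associativity hexagon I choose projective presentations of the two or three inputs and tensor to build a commutative cube or prism whose back face is the corresponding hypothesis on $\Ccat^p$; biexactness of $\otimes_\Dcat$ and exactness of $F$ guarantee that the structural maps from the back face to the front face at $\Mmod,\Nmod,\Pmod$ are epimorphisms, so the front face inherits commutativity.

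The main obstacle is the unit constraint, because $1_\Ccat$ itself need not be projective and the hypothesis only controls the projective cover $\pi:\Qmod\twoheadrightarrow 1_\Ccat$. Here the key observation is that $F(\pi)\otimes_\Dcat\id_{F(\Mmod)}$ is an epimorphism, by exactness of $F$ and biexactness of $\otimes_\Dcat$. I claim that the left unit triangle for $(F,\tstr_0,\tstr_2)$ at $\Mmod$ commutes after post-composition with this epimorphism: the square involving $\tstr_2(\Qmod,\Mmod)=\tstr_2^p(\Qmod,\Mmod)$ and $F(\pi\otimes_\Ccat\id_\Mmod)$ commutes by naturality of $\tstr_2$, and the remaining data is exactly the rectangle \eqref{eq:lunitrect}, which commutes by hypothesis. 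Since $F(\pi)\otimes_\Dcat\id_{F(\Mmod)}$ is an epimorphism, the left unit triangle itself commutes. The right unit triangle is handled identically via \eqref{eq:runitrect}. Uniqueness of the extension $\tstr_2$ is immediate from the cokernel characterisation.

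Finally, if $\tstr_2^p$ is itself a natural isomorphism on $\Ccat^p$, then in the defining cokernel diagram the two leftmost columns are isomorphisms of right exact sequences, so the five lemma forces $\tstr_2(\Mmod,\Nmod)$ to be an isomorphism; equivalently, running the construction with the pointwise inverse of $\tstr_2^p$ produces an explicit two-sided inverse of $\tstr_2$. Consequently $(F,\tstr_0,\tstr_2)$ is a strong braided tensor functor in that case.
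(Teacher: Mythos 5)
Your proposal follows essentially the same strategy as the paper's proof: extend $\tstr_2^p$ via projective presentations of $\Mmod,\Nmod$, tensor them to obtain a presentation of $\Mmod\otimes_\Ccat\Nmod$, use the cokernel universal property to define $\tstr_2(\Mmod,\Nmod)$, verify independence of the presentation by the lifting lemma, and then deduce the unit, braiding and associativity constraints by post-composing with an epimorphism coming from the projective data (a cube for braiding/associativity, the rectangle for units). The only real difference is cosmetic — you work with two-term presentations where the paper writes out full resolutions and total complexes but effectively only uses the first two stages — and you add a brief five-lemma argument for the isomorphism case that the paper omits; both are routine.
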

\begin{proof}
  For $\Mmod,\Nmod \in \Ccat$ we define the family of morphisms
  $\tstr_2\brac*{\Mmod, \Nmod}:F(\Mmod)\otimes_\Dcat F(\Nmod)\to
  F(\Mmod\otimes_\Ccat\Nmod)$ from the definition of \(\tstr_2^p\) on projective objects. Choose projective resolutions $0 \leftarrow \Mmod\leftarrow \Mmod_0 \leftarrow \Mmod_1 \leftarrow\cdots $ and $0 \leftarrow \Nmod\leftarrow \Nmod_0\leftarrow \Nmod_1 \leftarrow\cdots $. 
  By assumption, $F$ is exact and maps projectives to projectives. This
  ensures that the images of the projective resolutions of \(\Mmod\) and
  \(\Nmod\) are projective resolutions of \(F(\Mmod)\) and
  \(F(\Nmod)\), respectively.
  Similarly, the biexactness of
  the tensor product and projectives forming a tensor ideal implies that the
  $\Mmod_i \otimes_\Ccat \Nmod_j$ are projective, and that the total complex
  \(\mathrm{Tot}^\oplus \brac*{\Mmod_\bullet\otimes_\Ccat \Nmod_\bullet}\) is
  a projective resolution of $\Mmod \otimes_\Ccat \Nmod$.
  Combining these two insights, we can form the two total complexes
  $\mathrm{Tot}^\oplus \brac*{F(\Mmod_\bullet)\otimes_\Dcat F(\Nmod_\bullet)}$ and
  $\mathrm{Tot}^\oplus \brac*{F(\Mmod_\bullet \otimes_\Ccat \Nmod_\bullet)}$ in
  \(\Dcat\), which
  give projective resolutions for $F(\Mmod) \otimes_\Dcat F(\Nmod)$ and $F(\Mmod \otimes_\Ccat \Nmod)$, respectively.
  We can therefore consider the following diagram, where the dashed arrow is
  the morphism we are seeking.
  \begin{equation}
    \begin{tikzcd}[row sep=10mm, column sep=6mm]
      0 & F(\Mmod) \otimes_\Dcat F(\Nmod) \arrow[d, dashed, "\tstr_2\brac*{\Mmod, \Nmod}"] \arrow[l]
      &&  F(\Mmod_0) \otimes_\Dcat F(\Nmod_0)  \arrow[ll, "d_0"] \arrow[d, "\tstr_2^p\brac*{\Mmod_0, \Nmod_0}"]
      &  \brac*{F(\Mmod_1) \otimes_\Dcat F(\Nmod_0)}  \oplus  \brac*{F(\Mmod_0) \otimes_\Dcat F(\Nmod_1)} \arrow[l, "d_1"] \arrow[d, "\tstr_2^p\brac*{\Mmod_1, \Nmod_0}\, \oplus \, \tstr_2^p\brac*{\Mmod_0, \Nmod_1}"] 
      & \cdots \arrow[l]  \\
      0 & F(\Mmod \otimes_\Ccat \Nmod) \arrow[l]
      && F(\Mmod_0 \otimes_\Ccat \Nmod_0)  \arrow[ll, "\delta_0"]	
      & \brac*{F(\Mmod_1 \otimes_\Ccat \Nmod_0)} \oplus \brac*{F(\Mmod_0 \otimes_\Ccat \Nmod_1)} \arrow[l, "\delta_1"]
      & \cdots \arrow[l]
    \end{tikzcd}
    \label{eq:restotcomps}
  \end{equation}
  By the assumed naturality of $\tstr_2^p$, the squares with solid edges in
  the diagram commute. Recall the following well known fact about abelian categories: 
  for any exact sequence \(0 \rightarrow \Amod\stackrel{\alpha}{\longrightarrow} \Bmod \stackrel{\beta}{\longrightarrow} \Cmod
  \rightarrow 0\), every morphism
  \(\Bmod\stackrel{\widetilde{\beta}}{\longrightarrow} \widetilde{\Cmod}\)
  satisfying \(\tilde{\beta}\circ \alpha=0\) factorises uniquely over
  \(\beta\), that is, there exists a unique morphism
  \(\Cmod\stackrel{\gamma}{\longrightarrow}\widetilde{\Cmod}\) such that
  \(\tilde{\beta}=\gamma\circ \beta\). If we set \(\alpha=d_1\), \(\beta=d_0\)
  and \(\tilde{\beta}=\delta_0\circ \tstr_2^p\brac*{\Mmod_0, \Nmod_0}\), then
  we see that
  \begin{equation}
    \tilde{\beta}\circ \alpha =\delta_0\circ \tstr_2^p\brac*{\Mmod_0,
      \Nmod_0}\circ d_1=\delta_0\circ \delta_1\circ \brac*{\tstr_2^p\brac*{\Mmod_1, \Nmod_0} \oplus  \tstr_2^p\brac*{\Mmod_0, \Nmod_1}}=0,
  \end{equation}
  where the second equality uses the commutativity of the right square in
  \eqref{eq:restotcomps} and the third uses the exactness of the lower row of
  \eqref{eq:restotcomps}. Thus \(\tstr_2(\Mmod,\Nmod)\) is the unique
  morphism factorising \(\delta_0\circ \tstr_2^p\brac*{\Mmod_0, \Nmod_0}\) over \(d_0\).

  Next we show  that this characterisation of \(\tstr_2\) is 
  independent of the projective resolutions:  suppose $0 \leftarrow
  \Mmod\leftarrow \Mmod_0^\prime\leftarrow \Mmod_1^\prime\leftarrow \cdots  $ is
  also a projective resolution of \(\Mmod\), then by the lifting lemma there
  exist chain maps $f_\bullet:\Mmod_\bullet\to \Mmod_\bullet^\prime$ and
  $f^\prime_\bullet:\Mmod_\bullet^\prime\to \Mmod_\bullet$ lifting the
  identity on $\Mmod$, and which are
  unique up to chain homotopy and inverse to each other up to chain
  homotopy.
  To show that this resolution of \(\Mmod\) gives the same morphism  $\tstr_2\brac*{\Mmod, \Nmod}$,
  we need to verify that
  \begin{equation}
    \tstr_2^p\brac*{\Mmod^\prime_0, \Nmod_0}\circ \brac*{F(f_0) \otimes_\Dcat F(\id_{\Nmod_0})}=
    F(f_0 \otimes_\Ccat \id_{\Nmod_0}) \circ \tstr_2^p\brac*{\Mmod_0, \Nmod_0}.
  \end{equation}
  This equality holds due to
  the assumed naturality of  $\tstr_2^p\brac*{\Mmod_0, \Nmod_0}$ on projectives. 
  The independence of \(\tstr_2\brac*{\Mmod,\Nmod}\) from choice of projective
  resolution of \(\Nmod\) follows in the same way.
  
  We now  show that \((F,\tstr_0,\tstr_2)\) satisfies the constraints of a
  braided lax tensor functor.  First, we show that the first two commutative diagrams lead to the unital constraints for projective objects. 
  Consider the commuting square \eqref{eq:lunitrect}, where we add the
  morphism \(\tstr_2\brac*{1_\Ccat, \Mmod}\) to the middle column.
  \begin{equation} \label{eq:unitproj}
    \centering
    \begin{tikzcd}[row sep=10mm, column sep=20mm]
      1_\Dcat \otimes_\Dcat F(\Mmod) \arrow[d, "l_{F(\Mmod)}"] 
      &  F(1_\Ccat) \otimes_\Dcat F(\Mmod) \arrow[l, "\tstr_0^{-1} \otimes_\Dcat \id_{F(\Mmod)}"] \arrow[d, dashed, "\tstr_2\brac*{1_\Ccat, \Mmod}"]
      & F(\Qmod) \otimes_\Dcat F(\Mmod) \arrow[l,"F(\pi) \otimes_\Dcat \id_{F(\Mmod)}"] \arrow[d, "\tstr_2^p\brac*{\Qmod, \Mmod}"] \\
      F(\Mmod) 
      & F(1_\Ccat \otimes_\Ccat \Mmod)  \arrow[l, "F(l_{\Mmod})"] 
      & F(\Qmod \otimes_\Ccat \Mmod)  \arrow[l,"F\brac*{\pi_1 \otimes_\Ccat \id_{F(\Mmod)}}"]
    \end{tikzcd}
  \end{equation}
  The right square commutes by the naturality of \(\tstr_2\)
  and the outer rectangle commutes by assumption. Therefore
  \begin{align}
    F(l_\Mmod)\circ \tstr_2(1_\Ccat,\Mmod)\circ \brac*{F(\pi)\otimes_\Dcat\id_{F(\Mmod)}}&=
    F(l_\Mmod)\circ F(\pi\otimes_\Ccat \id_{F(\Mmod)})\circ \tstr_2(Q,\Mmod)\nonumber\\
    &= l_{F(\Mmod)}\circ \tstr_0^{-1}\otimes_\Dcat\id_{F(\Mmod)}\circ \brac*{F(\pi)\otimes_\Dcat\id_{F(M)}}.
  \end{align}
  Since \(F(\pi)\otimes_\Dcat\id_{F(M)}\) is an epimorphism \(F(l_\Mmod)\circ
  \tstr_2(1_\Ccat,\Mmod)=l_{F(\Mmod)}\circ \tstr_0^{-1}\otimes_\Dcat\id_{F(\Mmod)}\)
  and the hence the left square commutes. A
  similar argument holds for the right unit. 
  
  Next we show that \(\tstr_2\) satisfies the constraints of the braiding square
    and the associativity hexagon. For any \(\Mmod,\Nmod,\Pmod\in\Ccat\) and
    choices of projective resolutions for these objects consider the following diagrams.
    \begin{equation}\label{eq:projsuf2}
      \centering
      \adjustbox{scale=0.7}{
        \begin{tikzcd}[row sep=20, column sep=1]
          & F(\Nmod) \otimes_\Dcat F(\Mmod)  \ar{dd} & &  F(\Nmod_0) \otimes_\Dcat F(\Mmod_0) \ar{ll} \ar{dd}  \\
          F(\Mmod) \otimes_\Dcat F(\Nmod) \ar{ur} \ar{dd} & & F(\Mmod_0) \otimes_\Dcat F(\Nmod_0) \ar{ur} \ar[crossing over]{ll} \\
          & F(\Nmod \otimes_\Ccat \Mmod)  & &  F(\Nmod_0 \otimes_\Ccat \Mmod_0)  \ar{ll}  \\
          F(\Mmod \otimes_\Ccat \Nmod) \ar{ur} & & F(\Mmod_0 \otimes_\Ccat \Nmod_0) \ar{ur} \ar{ll} \ar[from=uu,crossing over]
        \end{tikzcd}}%
      \adjustbox{scale=0.7}{
        \begin{tikzcd}[row sep=5, column sep=-20]
          & F(\Mmod) \otimes_\Dcat (F(\Nmod) \otimes_\Dcat F(\Pmod)) \ar{dd} & &  F(\Mmod_0) \otimes_\Dcat (F(\Nmod_0) \otimes_\Dcat F(\Pmod_0)) \ar{ll} \ar{dd}  \\
          (F(\Mmod) \otimes_\Dcat F(\Nmod)) \otimes_\Dcat F(\Pmod) \ar{ur} \ar{dd} & & (F(\Mmod_0) \otimes_\Dcat F(\Nmod_0)) \otimes_\Dcat F(\Pmod_0) \ar{ur} \ar[crossing over]{ll} \\
          &  F(\Mmod) \otimes_\Dcat F(\Nmod \otimes_\Ccat \Pmod) \ar{dd} & &  F(\Mmod_0) \otimes_\Dcat F(\Nmod_0 \otimes_\Ccat \Pmod_0) \ar{dd} \ar{ll}  \\
          F(\Mmod \otimes_\Ccat \Nmod) \otimes_\Dcat F(\Pmod) \ar{dd}  & & F(\Mmod_0 \otimes_\Ccat \Nmod_0) \otimes_\Dcat F(\Pmod_0) \ar{dd} \ar[crossing over]{ll} \ar[from=uu,crossing over] \\
          & F(\Mmod \otimes_\Ccat (\Nmod \otimes_\Ccat \Pmod))  & &  F(\Mmod_0 \otimes_\Ccat (\Nmod_0 \otimes_\Ccat \Pmod_0)) \ar{ll}  \\
          F((\Mmod \otimes_\Ccat \Nmod) \otimes_\Ccat \Pmod)  \ar{ur} & & F((\Mmod_0 \otimes_\Ccat \Nmod_0) \otimes_\Ccat \Pmod_0) \ar{ur} \ar{ll} \ar[from=uu,crossing over]
        \end{tikzcd}}
    \end{equation}
    The left faces are, respectively, the braiding and associativity
    constraints whose commutativity we need to show. The
    right faces are the same diagrams evaluated on the first projective
    coefficients of the appropriate projective resolutions formed by taking
    total complexes, while the horizontal arrows are those of the
    these resolutions. Note that the right faces
    commute by assumption \eqref{eq:brassocassum}, as they are evaluated on
    projective objects, while the front and back commute by the naturality of \(\tstr_2\).
    We present the detailed argument for the braiding square. The argument for the associativity hexagon is similar.
  Consider the following paths through the braiding diagram.
  \begin{equation}
    \centering
    \adjustbox{scale=0.7}{
      \begin{tikzcd}[row sep=20, column sep=1]
        & F(\Nmod) \otimes_\Dcat F(\Mmod) \ar{dd} & &  F(\Nmod_0) \otimes_\Dcat F(\Mmod_0) \ar[dashed]{dd}  \\
        F(\Mmod) \otimes_\Dcat F(\Nmod) \ar{ur}  & & F(\Mmod_0) \otimes_\Dcat F(\Nmod_0)
        \ar[dashed]{ur} \ar[crossing over, "d", near start]{ll} \\
        & F(\Nmod \otimes_\Ccat \Mmod)  & &  F(\Nmod_0 \otimes_\Ccat \Mmod_0)
        \ar[dashed,"\delta"]{ll}  \\
        F(\Mmod \otimes_\Ccat \Nmod) & & F(\Mmod_0 \otimes_\Ccat \Nmod_0) 
      \end{tikzcd}}%
    \hspace{10mm}
    \adjustbox{scale=0.7}{
      \begin{tikzcd}[row sep=20, column sep=1]
        & F(\Nmod) \otimes_\Dcat F(\Mmod)   & &  F(\Nmod_0) \otimes_\Dcat F(\Mmod_0)  \\
        F(\Mmod) \otimes_\Dcat F(\Nmod) \ar{dd} & & F(\Mmod_0) \otimes_\Dcat F(\Nmod_0)
        \ar[crossing over, "d"]{ll} \\
        &  F(\Nmod \otimes_\Ccat \Mmod)  & &  F(\Nmod_0 \otimes_\Ccat \Mmod_0) \ar[dotted,"\delta", near start]{ll}  \\
        F(\Mmod \otimes_\Ccat \Nmod) \ar{ur} & & F(\Mmod_0 \otimes_\Ccat \Nmod_0)
        \ar[dashed]{ur}  \ar[from=uu,crossing over,dotted]
      \end{tikzcd}}
  \end{equation}
  The solid paths in the left and right diagrams denote the compositions of
  maps $\tstr_2\brac*{\Nmod,\Mmod} \circ c_{F(\Mmod), F(\Nmod)}\circ d$ and
  $ F(c_{\Mmod, \Nmod}) \circ \tstr_2\brac*{\Mmod, \Nmod}\circ d$, respectively. The
  dashed paths denote analogous morphisms on the projective modules, that is, $\tstr_2\brac*{\Nmod_0,\Mmod_0} \circ
  c_{F(\Mmod_0), F(\Nmod_0)}$ and $F(c_{\Mmod_0, \Nmod_0})  \circ
  \tstr_2^p\brac*{\Mmod_0, \Nmod_0} $ in the left and right diagrams,
  respectively. By construction, all four morphisms \(F(\Mmod_0)\otimes_\Dcat
  F(\Nmod_0)\to F(\Nmod\otimes_\Ccat \Mmod)\) in the left and right diagrams are
  equal, in particular the morphisms consisting of compositions of solid
  arrows. Thus, since \(d\) is an epimorphism, the left face commutes.
\end{proof}

\begin{rmk}
  When the category \(\Dcat\) in \cref{thm:projsufficient} is a concrete
  category where the objects are at the very least abelian groups and the
  morphisms group homomorphisms, for example, a module category for a \voa{},
  it may desirable to have an actual formula for the natural transformation
  \(\tstr_2\) on non-projective modules. This can be done by considering the
  diagram \eqref{eq:restotcomps} and picking a set theoretic right inverse \(d_0^{-1}\) to
  the morphism \(d_0\) in \eqref{eq:restotcomps}, that is, \(d_0 \circ
  d_0^{-1}=\id_{F(\Mmod)\otimes_\Dcat F(\Nmod)}\). Note that in general
    \(d_0^{-1}\) cannot be chosen to be a morphism of \(\Dcat\). One can then
    define \(\tstr_2(\Mmod,\Nmod) = \delta_0\circ \tstr_2^p(\Mmod,\Nmod)\circ
    d_0^{-1}\). It is not hard to show that this formula gives a morphism in
    \(\Dcat\) and that it does not depend on the choice of right inverse \(d_0^{-1}\).
\end{rmk}

\section{The (chiral) free boson in three guises}
\label{sec:freebosons}

While the previous section was very general, we now change gears and consider a
specific family of monoidal categories, typically called 
free bosons in the context
of \voa{s}. We will, however, consider these free bosons in slightly greater
generality than is usually done in the literature (by allowing for different
choices of conformal structures) and we will show that they
admit \rgv{} structures.

\subsection{Lattice data for free bosons}
\label{sec:data}
Throughout this section, we will make frequent use of certain linear
algebraic and lattice data, whose structure we record here.
\begin{defn} \label{def:boslatt}
  A set of \emph{\data{}} is a quadruple \((\hvec,\pair{-}{-},\lat,\ffv)\), where
  \begin{itemize}
  \item $\hvec$ is a finite dimensional real vector space,
  \item $\pair{-}{-}$ is a non-degenerate symmetric real-valued bilinear form on $\hvec$,
  \item $\lat \subset \hvec$ is a lattice  (that is, a discreet
    subgroup of \(\hvec\)), which is even with respect to
    $\pair{-}{-}$,
  \item \(\ffv \in \qlat\) is a distinguished element called the
    \emph{Feigin-Fuchs boson}, where \(\dlat = \set{\mu \in \hvec \
      \vert \ \pair{\mu}{ \lat} \subset \ZZ}\).
  \end{itemize}
\end{defn}
Note also that we are not assuming that, \(\pair{-}{-}\) is positive definite, that \(\lat\) is non-trivial nor that
\(\pair{-}{-}\) restricted to \(\lat\) is non-degenerate. Further, if \(\lat\)
is not full rank, then \(\dlat\) will not be discreet.
For any set of \data{} 
we can always choose a section $s: \qlat \ra \dlat$, that is
\(\forall \rho\in \qlat\), \(s(\rho)\in\rho\) or in other words a map which
chooses a representative for each coset. Note that \(s\) will generally only
be a set theoretic section and not a group homomorphism. Additionally, we will always assume that
\(s(\lat)=0\in \dlat\).
From the section \(s\) we construct the associated $2$-cocycle
 \(k:\qlat\times \qlat\to \lat\), 
\(k(\mu,\nu)=s(\mu+\nu) - s(\mu) - s(\nu)\), \(\mu,\nu\in\dlat\), which
encodes the failure of \(s\) to be a group homomorphism.
Finally, let $\tcycsymb: \lat \times \lat \ra \CC^\times$ be a normalised
2-cocycle with commutator function
$C(\alpha,\beta)=(-1)^{\pair{\alpha}{\beta}}$, 
that is, $\tcycsymb$ 
satisfies the following conditions, for $\alpha, \beta, \gamma \in
\lat$. 
\begin{align} 
  \tcyc{\alpha}{0} &= \tcyc{0}{ \alpha} = 1,\quad
  \tcyc{\beta}{\gamma} \tcyc{\alpha + \beta}{\gamma}^{-1} \tcyc{\alpha}{\beta+
  \gamma} \tcyc{\alpha}{\beta}^{-1} = 1 ,\quad
  \tcyc{\alpha}{\beta} \tcyc{\beta}{\alpha}^{-1} = (-1)^{\pair{\alpha}{\beta}}.
  \label{eq:voacocycle}
\end{align}
An example of such a 2-cocycle can be constructed from any ordered
  \(\ZZ\)-basis \(\set{\alpha_i}\) of \(\lat\) by defining \(\tcycsymb\) to
  be the group homomorphism uniquely characterised by
  \begin{equation}
    \tcyc{\alpha_i}{\alpha_j}=
    \begin{cases}
      (-1)^{\pair{\alpha_i}{\alpha_j}}&\text{if}\ i<j\\
      1&\text{if}\ i\ge j
    \end{cases}\ .
  \end{equation}
  Note that in general \(\tcycsymb\) need not be a homomorphism, however,
since \(\lat\) is an abelian group, all choices of 2-cocycle are cohomologous.
The section and 2-cocycles will always be denoted \(s,\) \(k\) and
\(\tcycsymb\), respectively, for any
set of \data{}. They will be required for giving explicit formulae for certain
structures such as braiding and associativity isomorphisms.

Each set of \data{} will allow us to define a category of graded vectors
spaces, a vertex operator algebra module category and a quasi-Hopf algebra
module category all with a natural choice of 
ribbon \gv{} structure determined by these \data{}. 
Any such triple of
categories will be shown to be ribbon \gv{} equivalent provided their \data{}
are equal. Different choices of section \(s\) and 2-cocycle \(\tcycsymb\) will
yield equivalent categories, hence \(s\) and \(\tcycsymb\) are not data.

\begin{lem} \leavevmode
  Let \((\hvec,\pair{-}{-},\lat,\ffv)\)  be a set of \data{}.
  \begin{enumerate}
  \item Let \(\plat=\set{\mu \in \hvec \st \pair{\mu}{ \lat}= 0}\).
    There exists a finitely generated free abelian subgroup $\Gamma
    \subset \dlat$ such that $\dlat = \plat \oplus \Gamma$,
    where \(\oplus\) is the (internal) direct sum of \(\ZZ\)-modules.
    \label{itm:dlatdecomp}
  \item Let \(\nlat=\set{v\in \lat\st \pair{v}{w}=0, \forall w\in\lat}\).
    The subgroup \(\Gamma\) from above can be chosen in such a way that
    there exists a vector subspace \(V\in \plat\) and free finitely generated
    groups \(F,D\subset\Gamma\) such that all of the following hold.
    \begin{itemize}
    \item As an abelian group, $\dlat$ admits a direct sum decomposition
      \begin{equation}
        \dlat = V \oplus \rspn\nlat \oplus F \oplus D.
      \end{equation}
    \item The three subgroups $V$, $\rspn\nlat \oplus F$ and $D$ are mutually orthogonal.
    \item The restriction of \(\pair{-}{-}\) to each of the three subgroups
      $V$, $\rspn\nlat \oplus F$ and $D$ individually is non-degenerate.
    \item The restriction of \(\pair{-}{-}\) to $\rspn\nlat$ and  $F$ individually is trivial.
    \end{itemize}
    \label{itm:dlatdecomprefined}
  \end{enumerate}
  \label{thm:hbasis}
\end{lem}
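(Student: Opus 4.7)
For Part (1) the plan is the standard splitting argument. The pairing map $\phi \colon \dlat \to \lat^{\ast}$, $\mu \mapsto \pair{\mu}{-}|_{\lat}$, has kernel precisely $\plat$, and it is surjective: given $f \in \lat^{\ast}$, I would extend $f$ $\RR$-linearly to $W := \rspn\lat$ (possible because $\lat$ is a full-rank lattice in its real span) and then realise this extension as $\pair{\mu}{-}|_{W}$ for some $\mu \in \hvec$ via non-degeneracy of $\pair{-}{-}$ on $\hvec$. The resulting short exact sequence $0 \to \plat \to \dlat \to \lat^{\ast} \to 0$ splits because $\lat^{\ast}$ is free and finitely generated, which produces the required $\Gamma$.

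For Part (2) the key preparatory step is to fix a sublattice $\Lambda' \subset \lat$ complementing $\nlat$ at the \emph{integral} level. Since $\lat/\nlat$ is torsion-free (if $k\lambda \in \nlat$ then $\lambda \in \rspn\nlat \cap \lat = \nlat$), $\nlat$ is a direct summand of $\lat$, so I pick $\lat = \nlat \oplus \Lambda'$ and set $U := \rspn{\Lambda'}$. A dimension count combined with $\Lambda' \cap \rspn\nlat = 0$ yields $W = U \oplus \rspn\nlat$, and $\pair{-}{-}|_{U}$ is non-degenerate because the radical of $\pair{-}{-}|_{W}$ is precisely $\rspn\nlat$. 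Non-degeneracy of $U$ in $\hvec$ then gives an orthogonal decomposition $\hvec = U \oplus U^\perp$ with $\plat \subset U^\perp$. Inside $\plat$ the radical of the induced form is again $\rspn\nlat$, so I choose any non-degenerate complement $V$ of $\rspn\nlat$ in $\plat$. Non-degeneracy of $V$ inside $U^\perp$ forces $U^\perp = V \oplus V^{\perp}_{U^\perp}$, and the summand $V^{\perp}_{U^\perp}$ has dimension $2r_n$ (where $r_n := \dim\rspn\nlat$) and contains $\rspn\nlat$ as a Lagrangian subspace; standard symplectic linear algebra (start with any vector-space complement of $\rspn\nlat$ and correct it by $\rspn\nlat$-valued shifts to enforce isotropy) then supplies a complementary Lagrangian $F_\RR \subset V^{\perp}_{U^\perp}$, producing the orthogonal vector space decomposition
\begin{equation*}
  \hvec = V \oplus \rspn\nlat \oplus F_\RR \oplus U
\end{equation*}
in which the only nontrivial pairing between distinct summands is the perfect pairing $\rspn\nlat \times F_\RR \to \RR$.

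To promote this to the asserted decomposition of $\dlat$ I set $F := \dlat \cap F_\RR$ and $D := \dlat \cap U$. Writing $\mu \in \dlat$ as $\mu_V + \mu_n + \mu_F + \mu_U$ in the vector space decomposition, and $\lambda = \lambda_n + \lambda' \in \nlat \oplus \Lambda' = \lat$, the orthogonalities $F_\RR \perp U$, $V \perp W$ and the isotropy of $\rspn\nlat$ in $W$ collapse the pairing to $\pair{\mu}{\lambda} = \pair{\mu_F}{\lambda_n} + \pair{\mu_U}{\lambda'}$. Running $\lambda$ separately through $\nlat$ and $\Lambda'$ then shows that $\mu \in \dlat$ if and only if $\mu_F \in F$ and $\mu_U \in D$, which delivers the decomposition; identifying $F \cong \nlat^{\ast}$ and $D$ with the dual lattice of $\Lambda'$ inside $U$ gives the correct ranks, while orthogonality, non-degeneracy and isotropy of the summands are immediate from the vector space construction. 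I expect the main obstacle, and the reason the argument is not purely formal, to be precisely the choice $U = \rspn{\Lambda'}$: for a generic complement of $\rspn\nlat$ in $W$ the image $\pi_n(\lat) \subset \rspn\nlat$ under projection along $U$ need not be discrete (one can exhibit this already in rank-two examples with irrational coefficients), in which case the naive candidate $\dlat \cap F_\RR$ for $F$ collapses; anchoring $U$ to a $\ZZ$-module complement of $\nlat$ inside $\lat$ forces $\pi_n(\lat) = \nlat$ and circumvents this pathology.
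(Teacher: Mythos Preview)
Your proof is correct and takes a genuinely different route from the paper's. For Part~(1) the two arguments coincide (split the surjection $\dlat \to \lat^{\ast}$ using freeness of the target). For Part~(2) the paper works \emph{group-first}: it chooses $\Gamma$ so that the integral complement $\lat^{c}$ of $\nlat$ sits inside it, then carves out $D = \Gamma \cap \rspn{\lat^{c}}$, defines $F$ as the orthogonal complement of $D$ inside $\Gamma$ and $V$ as the orthogonal complement of $F$ inside $\plat$, and verifies $\Gamma = F \oplus D$ and $\plat = V \oplus \rspn\nlat$ by an explicit dual-basis computation together with a rank count. You instead work \emph{vector-space-first}: you assemble the orthogonal decomposition $\hvec = V \oplus \rspn\nlat \oplus F_{\RR} \oplus U$ entirely at the level of $\hvec$ (the isotropic-complement step being the only nontrivial ingredient), and only afterwards intersect with $\dlat$, reading off the group decomposition from the collapsed pairing $\pair{\mu}{\lambda} = \pair{\mu_F}{\lambda_n} + \pair{\mu_U}{\lambda'}$. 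Your approach makes the hyperbolic geometry of the middle block more transparent, and your closing remark about why $U$ must be anchored to an \emph{integral} complement $\Lambda'$ pinpoints exactly the subtlety that the paper absorbs into the hypothesis $\lat^{c} \subset \Gamma$. One terminological quibble: the form on $V^{\perp}_{U^{\perp}}$ is symmetric rather than alternating, so ``symplectic'' is a slight misnomer, but your parenthetical correction-by-$\rspn\nlat$-shifts argument is precisely the standard construction of a complementary maximal isotropic in a split quadratic space and goes through verbatim.
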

\begin{proof}
  \begin{enumerate}
  \item If \(\Gamma\) exists, then it must be isomorphic to the quotient
    \(\dlat/\plat\), we therefore first need to show that \(\dlat/\plat\) is
    freely finitely generated. By definition \(\plat\) is the kernel of the surjective
    group homomorphism
    \begin{align}
      \psi:\dlat &\to \Homgrp{}{\lat}{\ZZ}, \nonumber\\
      \kappa &\mapsto  \pair{\kappa}{-}|_\lat.
    \end{align}
    Hence \(\dlat/\plat\cong \Homgrp{}{\lat}{\ZZ}\cong \lat\) which is freely
    finitely generated. Thus \(\dlat \to \dlat/\plat\) is a surjective
    homomorphism onto a free finitely generated \(\ZZ\) module with \(\plat\) as its kernel,
    hence \(\plat\) admits a free finitely generated direct sum complement in \(\dlat\).
  \item  Note that \(\lat/\nlat\) is torsion free. This can be seen by
  contradiction. If there was an element \(t\in \lat\setminus \nlat\) such that \(k t\in \nlat\)
  for some non-zero \(k\in \ZZ\), then \(0=\pair{kt}{w}=k\pair{t}{w}\) for
  all \(w\in\lat\), but
  this would imply \(t\in\nlat\). Since \(\lat/\nlat\) is torsion free,
  \(\nlat\) admits a direct sum complement \(\lat^c\) in \(\lat\).
  The directness of the sum \(\lat=\nlat\oplus\lat^c\) implies that \(\pair{-}{-}\)
  restricted to \(\lat^c\) is non-degenerate. Note that the subgroup
  \(\Gamma\) from above can be chosen such that \(\lat^c\subset
  \Gamma\). Define \(D=\Gamma\cap\rspn{\lat^c}\), then the restriction of
  \(\pair{-}{-}\) to \(D\) is non-degenerate, because it is non-degenerate on
  \(\lat^c\). Next define \(F=\set{t\in \Gamma\st \pair{t}{f}=0, \forall f\in
    D}\), \(V=\set{v\in \plat\st \pair{v}{t}=0, \forall t\in F}\) and
  \(W=\set{w\in \plat\st \pair{w}{v}=0, \forall v\in V}\).

  We show that \(\Gamma=F\oplus D\). Let \(\set{f_i}_{i=1}^{\rk D}\) be  a
  \(\ZZ\)-basis of \(D\). Since \(\pair{-}{-}\) is non-degenerate 
  on \(D\), there exists an \(\RR\)-basis \(\set{f^i}_{i=1}^{\rk D}\) of
  \(\rspn D\), which is dual to \(\set{f_i}_{i=1}^{\rk D}\), that is
  \(\pair{f_i}{f^j}=\delta_{i,j}\). Note that this implies that the \(f^i\)
  basis elements pair integrally with any element in \(D\).
  Consider \(v\in \Gamma\), then
  \begin{equation}
    \tilde{v}=\sum_{i=1}^{\rk D}\pair{v}{f^i}f_i \in D
  \end{equation}
  and for any \(f^j\) we have
  \begin{equation}
    \pair{v-\tilde{v}}{f^j}=\pair{v}{f^j}-\pair{v}{f^j}=0.
  \end{equation}
  Since all elements of \(D\) are \(\RR\)-linear combinations of the
  \(\RR\)-basis elements \(f^i\), this implies \(v-\tilde{v}\in F\)
  and hence \(v\in F + D\). Next consider \(v\in F\cap D\), then
  \(\pair{v}{f}=0\) for all \(f\in D\), but \(\pair{-}{-}\) is non-degenerate
  on \(D\), hence \(v=0\) and \(\Gamma=F\oplus D\).

  Note that \(D\) is orthogonal to \(F\) by construction and to \(\plat\)
  since \(D\subset \rspn\lat\). Thus \(V\) is orthogonal to \(D\), \(F\) and
  \(W\) and so \(\pair{-}{-}\) must be non-degenerate on \(V\) in order to be
  non-degenerate on \(\hvec\). By a similar argument as for \(D\) and \(F\),
  we therefore have that \(\plat=V\oplus W\).

  By construction \(\nlat\) is orthogonal to \(V\)
  (because it is orthogonal to \(\plat\)) and also \(\nlat\subset
  \plat\). Hence \(\rspn \nlat\subset W\). A brief counting of
  dimensions and ranks reveals \(\rk\nlat=\rk\lat 
  -\rk D=\dim W=\rk F\), implying that  \(\rspn \nlat = W\).
  Finally, by construction \(F\) is orthogonal to \(V\) and
  \(\Gamma\) hence, by the non-degeneracy of \(\pair{-}{-}\) on \(\hvec\), \(F\) must pair non-trivially with \(W\).
  \end{enumerate}
\end{proof}

\begin{rmk}
  The quotient group \(\qlat\) will feature prominently below. The
  decomposition in Part \ref{itm:dlatdecomprefined}, after observing
  that \(\lat=\nlat\oplus D\cap \lat\), implies the decomposition
  \begin{equation}
    \qlat = V\oplus \frac{\rspn\nlat}{\nlat}\oplus F\oplus\frac{D}{D\cap \lat}.
  \end{equation}
  Thus \(\qlat\) decomposes into an abelian Lie group with a vector space part
  \(V\) and a compact part
  \(\rspn\nlat/\nlat\), and a finitely generated group with a free part \(F\)
  and a finite part \(D/D\cap \lat\).
\end{rmk}

\begin{ex}
  We have the following natural examples to consider.
  \begin{enumerate}
  \item An empty lattice: \(\hvec=\RR^n\) and \(\lat=\set{0}\). In the
    decomposition of \cref{thm:hbasis}, we have \(\dlat=\hvec=V\),
    \(\Gamma=\nlat=F=D=\set{0}\). Hence \(\qlat\cong\hvec\) and \(\ffv\) can
    be any element in \(\hvec\). In this case there is only one choice of
    section \(s\), the canonical identification of \(\hvec/\set{0}\) with
    \(\hvec\), and \(k=0\).
  \item A full rank lattice:  \(\hvec=\RR^n\) and \(\lat\) a rank \(n\) even
    integral lattice. In the
    decomposition of \cref{thm:hbasis}, \(\plat =\set{0}\) and so
    \(\dlat=\Gamma=D\) is finitely generated. Further, \(\qlat\) is a finite
    group whose order is equal to the determinant (up to a sign) of the Gram
    matrix of the pairing in any choice of \(\ZZ\)-basis of \(\lat\). We can
    construct a section \(s\) by fixing a \(\ZZ\)-basis \(\set{e_i}\) of
    \(\dlat\). The image of this basis in \(\dlat\) will be a set of
    generators \(\set{\overline{e_i}}\) and each \(\mu \in\qlat\) has a unique
    expansion
    \(\mu=\sum_i a_i \overline{e_i}\) such that the coefficients \(a_i\) are
    minimal non-negative integers. Then \(s(\mu)=\sum_i a_i e_i\) is a choice
    of section.
  \item Half rank indefinite lattice: \(\hvec=\RR^2\) with pairing
    \(\pair{(x_1,x_2)}{(y_1,y_2)}=x_1y_2+x_2y_1\) and lattice
    \(\lat=\set{(0,m)\st m\in\ZZ}\). Then, in the
    decomposition of \cref{thm:hbasis}, \(\nlat=\lat\), \(\plat=\rspn\nlat=\set{(0,x)\st x\in\RR}\),
    \(\dlat=\set{(m,x)\st m\in \ZZ,\ x\in \RR}\cong \ZZ\times \RR\),
    \(\qlat\cong \ZZ\times \RR/\ZZ\), 
    \(V=D=\set{0}\) and \(F=\set{(0,m)\st m\in\ZZ}\). Since the pairing is
    trivial when restricted to \(\lat\), we can choose the 2-cocycle to be
    trivial, that is, \(\tcycsymb=1\). We choose
    \(\ffv=(1,0+\ZZ)\) for the Feigin-Fuchs boson, as this a convenient choice for the free field
    realisations of bosonic ghost systems. See \cite{WoOsp19} for an example.
    Finally, we can define a choice
    of section \(s\) by \(s(x,y+\ZZ)=(x,\tilde{y})\), where \(\tilde{y}\) is
    the unique representative of the coset \(y+\ZZ\) in the interval \([0,1)\).
  \end{enumerate}

\end{ex}

\subsection{Categories of vector spaces graded by abelian groups}
\label{sec:grvspaces}

\begin{defn}
  Let $\Vect_G$ denote the category of finite dimensional 
  complex vector spaces graded by an abelian group $G$, whose morphisms are all grade
  preserving linear maps.
  This category is semisimple
  with the isomorphism classes of simple objects represented by the one
  dimensional vector spaces $\CC_g$ which are $\CC$ at grade $g \in G$
  and trivial at other grades. 
\end{defn}
Note that if \(G\) is not finite, then objects in $\Vect_G$ will have only
finitely many non-trivial homogenous spaces.
We define a tensor product bifunctor on $\Vect_G$ by
asserting
\begin{equation}
  (\Mmod \otimes \Nmod)_g = \bigoplus_{h\in G}
  \Mmod_{g-h}\otimes_\CC \Nmod_{h},\qquad g\in G,\  \Mmod,\Nmod\in \Vect_G,
\end{equation}
where \(\otimes_\CC\) is the tensor product of complex vectors spaces and
having the tensor product of morphisms be that of linear maps. Further the
unit morphisms of vector spaces then also define unit morphisms for the
tensor functor \(\otimes\) on $\Vect_G$. The associativity and braiding
isomorphisms can then be defined on tensor products of the simple objects \(\CC_g\) to be scalar multiplies
of the vector space associator and tensor flip respectively. We shall denote
these scalar multiples as \(F\) and \(\Omega\) below.

\begin{thm}[Eilenberg and MacLane\cite{Mac52}, Joyal and Street \cite{Joy93}]
  Let \(G\) be an abelian group and $\Vect_G$ the category of finite
  dimensional \(G\) graded complex vector spaces with tensor functor and unit
  isomorphisms defined above.
  Then the braiding and associativity morphisms on $\Vect_G$ are in bijection
  with normalised abelian 3-cocycles $(F,\Omega)$, that is pairs of maps
  \(F:G\times G\times G\to \CC^\times\) and \(\Omega:G\times G\to \CC^\times\) characterised by the relations
  \begin{align}
    F(g+h,k,l)F(g,h,k+l)&=F(g,h,k)F(g,h+k,l)F(h,k,l),\qquad g,h,k,l\in G\nonumber\\
    F(h, k, g)^{-1} \Omega(g, h + k) F(g, h, k)^{-1} &= \Omega(g, k) F(h, g, k)^{-1}
                                             \Omega(g, h), \nonumber \\
    F(k, g, h) \Omega(g+h,k) F(g, h, k) &= \Omega(g, k) F(g, k, h) \Omega(h, k),
  \end{align}
  and additionally requiring that both maps evaluate to \(1\in\CC^\times\) if any argument is \(0\in
  G\).
  Inequivalent associativity and braiding structures are parametrised by the
  cohomology classes of the third abelian group cohomology $ H^3_{\text{ab}}(G,\CC^\times)$.
  The cohomology classes \(\omega=[(F,\Omega)]\in
  H^3_{\text{ab}}(G,\CC^\times)\) are uniquely characterised by their trace
  $\brac*{\tr{\omega}}(g) = \Omega(g, g) = q(g)$, which yields a quadratic
  form $q: G \ra \CC^{\times}$.
  \label{thm:vecasbr}
\end{thm}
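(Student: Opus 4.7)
The plan is to decompose the proof into four steps: reduce all data to scalars on the simple objects, extract cocycle equations from the coherence axioms, classify equivalence classes in terms of $H^3_{\mathrm{ab}}$, and finally recognise this cohomology as the set of quadratic forms. The last step is the deepest point and the main obstacle, going back to classical work of Eilenberg-MacLane.

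For the first step, I would exploit semisimplicity. Every object is a direct sum of the simples $\CC_g$, and the underlying vector space tensor product identifies $\CC_g \otimes \CC_h$ canonically with $\CC_{g+h}$. Hence any candidate associator $\alpha_{\CC_g,\CC_h,\CC_k}$ differs from the vector space associator by a single scalar $F(g,h,k) \in \CC^\times$, and any candidate braiding $c_{\CC_g,\CC_h}$ differs from the vector space flip by a scalar $\Omega(g,h) \in \CC^\times$. Naturality under morphisms of simples together with additivity under direct sums implies that these scalars determine the full structure, and conversely any choice of scalars yields candidate natural transformations on all of $\Vect_G$.

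For the second step, I would translate the pentagon axiom evaluated on quadruples of simples directly into the $3$-cocycle condition on $F$, and the two hexagon axioms on triples of simples directly into the two mixed $(F,\Omega)$ relations stated. Compatibility with the vector space unit isomorphisms on $\CC_0 = \CC$ forces the normalisation conditions. For the third step, a braided monoidal auto-equivalence of $\Vect_G$ that is the identity on objects is the same data as a monoidal natural automorphism of the identity functor, which by semisimplicity is encoded by scalars $\eta(g,h) \in \CC^\times$. Writing out compatibility with the associator and the braiding shows that two cocycles give equivalent structures precisely when they differ by the abelian $2$-coboundary of $\eta$, yielding the parametrisation of inequivalent structures by $H^3_{\mathrm{ab}}(G,\CC^\times)$.

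For the last step, the trace $q(g) := \Omega(g,g)$ is immediately seen to be quadratic: the polarisation $b(g,h) := q(g+h)\, q(g)^{-1}\, q(h)^{-1}$ is a symmetric bicharacter of $G$, which follows from the hexagon relations specialised to coinciding arguments after the $F$ factors cancel along the diagonal. That the resulting map $\tr$ from $H^3_{\mathrm{ab}}(G,\CC^\times)$ onto the set of quadratic forms $G \to \CC^\times$ is in fact a bijection is the classical Eilenberg-MacLane theorem, which I would invoke directly: surjectivity requires constructing an explicit representative cocycle from a given quadratic form, while injectivity requires exhibiting explicit coboundaries trivialising any cocycle with trivial trace. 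These explicit constructions are the main technical obstacle and are the reason the statement is genuinely nontrivial rather than formal.
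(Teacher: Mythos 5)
The paper states this theorem without proof, attributing it to Eilenberg-MacLane \cite{Mac52} and Joyal-Street \cite{Joy93}, so there is no in-paper argument to compare against; your sketch follows the standard route in those references and is correct in outline. One terminology slip is worth fixing: an equivalence between $\Vect_G^{(F,\Omega)}$ and $\Vect_G^{(F',\Omega')}$ whose underlying functor is the identity is given not by a \emph{monoidal natural automorphism of the identity functor} (which would be one-variable data $\mu\colon G \to \CC^\times$) but by a \emph{monoidal structure} on the identity functor, i.e.\ a natural isomorphism $\tstr_2\colon \CC_g\otimes\CC_h \to \CC_{g+h}$ encoded by the two-variable scalars $\eta(g,h)$ that you write; it is the abelian coboundary of this $\eta$ which relates $(F,\Omega)$ and $(F',\Omega')$, and since you do land on two-variable data the slip is in wording only. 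You also correctly locate the genuinely non-formal content in the bijectivity of the trace map onto quadratic forms, which is precisely why the paper cites rather than reproves the result.
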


Due to the above theorem, we denote by $\Vect^q_G$ the equivalence class of
braided tensor categories with structure characterised by $q$, and by
$\Vect_G^{(F, \Omega)}$ the specific representative category whose
associativity and braiding structures corresponds to the abelian 3-cocycle $(F,\Omega)$. 

\begin{prop}
  Let \((F,\Omega)\) be an abelian 3-cocycle and consider the braided tensor category $\Vect_{G}^{(F, \Omega)}$.
  \begin{enumerate}
  \item For any \(h\in G\), \(K=\CC_{h}\) is a dualising object and hence endows
    $\Vect_{G}^{(F, \Omega)}$ with the structure of a \gv{} category.
    \label{itm:dobj}
  \item The dualising functor corresponding to the choice of dualising object
    \(K=\CC_{h}\), \(h\in G\) is characterised by
    \(D(\Mmod)_g\simeq (\Mmod_{h-g})^\ast\), \(g\in G\), \(\Mmod\in
    \Vect_{G}^{(F, \Omega)}\), where \(\ast\) denotes the ordinary vector
    space dual.
    \label{itm:dfun}
  \item Every dualising object of $\Vect_{G}^{(F, \Omega)}$ is isomorphic to
    one of the simple objects
    \(\CC_{h}\) for some \(h\in G\).
    \label{itm:alldobj}
   \end{enumerate}
  Denote by $\Vect_{G}^{(F, \Omega,h)}$ the \gv{} category constructed
  from $\Vect_{G}^{(F, \Omega)}$  with dualising object \(K=\CC_{2h}\).
  \begin{enumerate}[resume]
  \item The \gv{} category $\Vect_{G}^{(F, \Omega,h)}$ admits a twist
    \(\theta\) by defining
      \begin{equation}
    \theta_{Q}|_{\Mmod_g} = Q(g) \id_{\Mmod_g},\qquad
    Q(g) = 
    \frac{\Omega(g-h,g-h)}{\Omega(-h,-h)},\qquad
    \Mmod\in \Vect_{G}^{(F, \Omega,h)},\ g\in G.
    \label{eq:weakqform}
  \end{equation}
  \label{itm:gvtwist}
  \end{enumerate}
  \label{thm:vecgv}
\end{prop}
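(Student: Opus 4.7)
The overall strategy is to reduce every claim to an assertion about the simple objects $\CC_g$, which is possible because $\Vect_G^{(F,\Omega)}$ is semisimple and because the tensor product, the functor $\Homgrp{}{-}{K}$, and the candidate dualising functor all distribute over direct sums. For parts~(1) and (2), I would compute directly on simples: $\CC_a\otimes\CC_b\cong \CC_{a+b}$, so $\Homgrp{}{\CC_a\otimes \CC_b}{\CC_h}$ is one-dimensional if $a+b=h$ and zero otherwise, and it is canonically, and naturally in $\CC_a$, identified with $\Homgrp{}{\CC_a}{\CC_{h-b}}$. This forces $D(\CC_b)\cong\CC_{h-b}$; extending linearly grade by grade yields $D(\Mmod)_g\cong (\Mmod_{h-g})^*$, which is manifestly its own quasi-inverse (up to the canonical double dual of finite-dimensional vector spaces), hence an anti-equivalence.

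For part~(3) I would invoke \cref{thm:invdual}.\ref{itm:invtwist}, which asserts that the dualising objects form a single orbit under tensoring with invertibles. It therefore suffices to prove that every invertible object of $\Vect_G^{(F,\Omega)}$ is isomorphic to some simple $\CC_g$. A one-line Grothendieck-ring argument works: the Grothendieck ring is $\ZZ[G]$ with basis $\set{[\CC_g]}_{g\in G}$, and its only units that arise as classes of genuine objects are the $[\CC_g]$ themselves, so any invertible object has a simple Grothendieck class and must itself be simple. The orbit of $\CC_h$ under tensoring with the invertibles $\CC_{-g}$ thus exhausts all simples.

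For part~(4), I would work with $K=\CC_{2h}$, so $D(\Mmod)_g\cong(\Mmod_{2h-g})^*$, and invoke the trace quadratic form $q(g):=\Omega(g,g)$ from \cref{thm:vecasbr}, which is even ($q(-g)=q(g)$), together with its associated symmetric bicharacter
\begin{equation}
b(g,h):=\Omega(g,h)\Omega(h,g)=\frac{q(g+h)}{q(g)q(h)}.
\end{equation}
With $Q(g)=q(g-h)/q(-h)$, one reads off $Q(0)=1$, so $\theta_{\wun}=\id_{\wun}$. Since the double braiding on $\CC_a\otimes \CC_b$ is multiplication by $b(a,b)$, the balancing axiom \eqref{eq:twistbalancing} reduces to the scalar identity $Q(a+b)=b(a,b)Q(a)Q(b)$; unpacking $Q$ and applying $q(x+y)=q(x)q(y)b(x,y)$ twice collapses this to the trivial symmetry $b(-h,b)=b(b,-h)$, together with biadditivity of $b$. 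Finally, the ribbon condition $D(\theta_\Mmod)=\theta_{D\Mmod}$ on the grade-$g$ piece amounts to $Q(2h-g)=Q(g)$, which is immediate from the evenness $q(-x)=q(x)$. The only substantive step is the balancing reduction, but it is a short manipulation of the quadratic-form axioms; I do not foresee any real obstacle.
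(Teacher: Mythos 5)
Your proposal is correct and follows essentially the same route as the paper, so I will mostly note where you add useful detail. For Parts~(1) and (2) you take the ``compare morphism spaces on simples'' path, which the paper mentions as an alternative to its preferred route via rigidity: the category is rigid with $\CC_g^\vee\cong\CC_{-g}$, so $\CC_0$ is dualising and \cref{thm:invdual} then shows $\CC_h\cong\CC_0\otimes\CC_{-h}^{-1}$ is dualising for every $h$. Both computations are short and equivalent; your version has the small advantage of producing the explicit formula for $D$ directly, rather than extracting it afterwards.

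For Part~(3), the paper invokes \cref{thm:invdual} and leaves implicit the fact that the invertible objects of $\Vect_G^{(F,\Omega)}$ are precisely the simples $\CC_g$. Your Grothendieck-ring argument supplies this missing lemma, and it is essentially right provided you phrase it in terms of \emph{effective} classes: if $[M]=\sum_g n_g[\CC_g]$ and $[M^{-1}]=\sum_g m_g[\CC_g]$ with $n_g,m_g\ge 0$ and $[M][M^{-1}]=[\CC_0]$, then by positivity of the coefficients in $\ZZ[G]$ exactly one $n_g$ and one $m_{g'}$ are nonzero and equal to $1$, with $g+g'=0$; semisimplicity then forces $M\cong\CC_g$. (One should not say the only units of $\ZZ[G]$ are $\pm g$ --- that is false in general --- but your restriction to classes of genuine objects sidesteps this, and the positivity argument is the right one.)

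For Part~(4), the paper simply asserts that $Q$ satisfies the required relations. Your verification --- normalisation $Q(0)=1$; the balancing identity $Q(a+b)=b(a,b)Q(a)Q(b)$ reduced via $q(x+y)=q(x)q(y)b(x,y)$ to the symmetry $b(-h,b)=b(b,-h)$; and the ribbon compatibility $D(\theta_M)=\theta_{DM}$ reduced, using $D(\Mmod)_g\cong(\Mmod_{2h-g})^*$, to $Q(2h-g)=Q(g)$, i.e.\ $q(-x)=q(x)$ --- is exactly the computation the paper leaves to the reader, and it is carried out correctly. The only thing I would ask you to make explicit is that on the one-dimensional grade pieces, applying $D$ to a scalar multiple of the identity gives the same scalar multiple of the identity (since transpose preserves scalars), which is what converts $D(\theta_M)|_{(DM)_g}=Q(2h-g)\id$ into the desired scalar equality.
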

\begin{proof}
We first show Part \ref{itm:dobj}. This can be computed directly by
  comparing the dimensions of morphism spaces or one can note the following. The category $\Vect_{G}^{(F, \Omega,h)}$
  is known to be rigid (the rigid dual of any simple object \(\CC_h, h\in G\)
  is \(C_h^\vee\cong \CC_{-h}\) and the evaluation and coevaluation maps are
  those of vector spaces) and hence the unit object \(\CC_0\) is dualising. The
  simple modules \(\CC_h, h\in G\) are all invertible. Thus, by
  \cref{thm:invdual}.\ref{itm:invtwist}, \(\CC_0\otimes (\CC_h)^{-1}\cong
  \CC_{-h}\) is also dualising. \cref{thm:invdual}.\ref{itm:invtwist} also immediately implies Part \ref{itm:alldobj}.
  Part \ref{itm:dfun} follows by noting that the proposed formula for \(D\)
  satisfies the defining relation \eqref{eq:GVprop} for the dualising object
  \(K=\CC_h\).
  Part \ref{itm:gvtwist} follows from the fact that the given formula for \(Q(g)\) satisfies the
  relations implied by \eqref{eq:twistbalancing} and
  \eqref{eq:dualisedtwist}. 
\end{proof}

\begin{rmk}
  Observe that the choice of \(\CC_{2h}\) as dualising object in
  $\Vect_{G}^{(F, \Omega,h)}$ excludes those simple objects not labelled by
  the double of a group element in \(G\). This is not an oversight; while every
  simple object in $\Vect_{G}^{(F, \Omega)}$ is a valid choice of dualising
  object (this follows from the category being rigid, hence the tensor unit
  is dualising, and 
  by \cref{thm:invdual}, we can shift by invertible objects), 
  simple objects not labelled by the double of a group element need
  not admit a twist which satisfies \(D(\theta)=\theta_{D(-)}\). Fortunately,
  the vertex operator algebraic constructions to be discussed below will
  always yield dualising objects that admit twists and make a preferred choice
  of twist.
  
  Functions of the form \(Q\) in Equation \eqref{eq:weakqform} are called
  weak quadratic forms centred at \(h\). It is interesting to note that (at
  least in the special case of \(G\) being a finite group) the
  \gv{} ribbon twists on $\Vect_{G}^{(F, \Omega,h)}$ are in bijection with
  such weak quadratic forms centred at \(h\), as was shown in Zetzsche's Masters thesis
  \cite[Theorem 4.2.2]{Zet18}. This classification of \rgv{} structures by weak quadratic forms is a
  generalisation of the classification of rigid braided tensor structures by
  quadratic forms, which corresponds to the special case $h = 0$
  for the dualising object. Note also that for \(h=0\) the category is ribbon.
\end{rmk}

Let \(\ldat=(\hvec,\pair{-}{-},\lat,\ffv)\) be a set of \data{} and recall the
decomposition \(\dlat=\plat\oplus \Gamma\) of \cref{thm:hbasis}.\ref{itm:dlatdecomp}.
We specialise the results of \cref{thm:vecasbr,thm:vecgv} using \(\ldat\). 
We choose the abelian group to be
\(G=\qlat\) and the quadratic form to be
\begin{equation}
  q(\cst{\alpha})=
  \ee^{\ii \pi \pair{\sct{\alpha}}{\sct{\alpha}}}, \quad \alpha \in \qlat,
\end{equation}
which defines the equivalence class of braided monoidal categories
\(\Vect^q_{\qlat}\). Note that this choice of quadratic form is independent of
the choice of section \(s\) due to \(\lat\) being even. Note further that
  \(\pair{\sct{\alpha}}{\sct{\alpha}}\) need not be integral and so we have
  chosen \(\ee^{\ii\pi}\) as a specific branch of logarithm for \(-1\). The section \(s\) then allows us to realise a
representative \(\Vect_{\qlat}^{(F, \Omega)}\) of \(\Vect^q_{\qlat}\), by
defining the abelian 3-cocycle to be
\begin{align}
  \Omega(\cst{\alpha}, \cst{\beta})&=\ee^{\ii \pi \pair{\sct{\alpha}}{\sct{\beta}}}, \qquad F(\cst{\alpha}, \cst{\beta}, \cst{\gamma}) = 
  (-1)^{\pair{\sct{\alpha}} {k(\beta, \gamma)}}\frac{\tcyc{k(\alpha,\beta)}{k(\alpha+\beta,\gamma)}}{\tcyc{k(\beta,\gamma)}{k(\alpha,\beta+\gamma)}} \quad {\alpha},
  {\beta}, \gamma \in \qlat. 
  \label{eq:gvsbrassoc}
\end{align}
Note that the abelian 3-cocycle does depend on the choice of section \(s\),
however, all choices of \(s\) yield the same trace and hence yield equivalent
braided monoidal structures. Similarly, different choices of the 2-cocycle
  \(\tcycsymb\) will yield equivalent associators.
Finally, every \(\ffv\in \qlat\) yields a \rgv{} category 
\({\Vect}_{\qlat}^{(F, \Omega, \ffv)}\) with dualising object \(\CC_{2\ffv}\) and with ribbon twist
\(\theta|_{\Mmod_\alpha} = Q(\alpha)\id_{\Mmod_\alpha}\),  \(\Mmod\in
{\Vect}_{\qlat}^{(F, \Omega, \ffv)}\), \(\alpha\in\qlat\), given by
\begin{equation}
  Q(\alpha) =  \ee^{\ii \pi\sqbrac*{
      \pair{\sct{\alpha - \ffv}}{\sct{\alpha - \ffv}} -
      \pair{\sct{- \ffv}}{\sct{-\ffv}}}} = \ee^{\ii \pi\sqbrac*{
      \pair{\sct{\alpha} - \sct{\ffv}}{\sct{\alpha} - \sct{\ffv}} -
      \pair{\sct{- \ffv}}{\sct{-\ffv}}}} =
  \ee^{\ii \pi
    \pair{\sct{\alpha}}{\sct{\alpha} +2 \sct{- \ffv}}}
  =
  \ee^{\ii \pi
    \pair{\sct{\alpha}}{\sct{\alpha} - 2 \sct{\ffv}}},
\end{equation}
where we have used that the lattice $\Lambda$ is even.
As with the quadratic form, the weak quadratic form \(Q\), which characterises the twist,
is independent of the choice of section due to \(\lat\) being even. We denote
the \rgv{} category constructed above by \(\Vect(\ldat)\).

\begin{ex}
  Recall the half rank lattice example at the end of \cref{sec:data}. In the
  notation and conventions introduced there, we have the \rgv{}
  structure defined by the abelian 3-cocycle, trace and twist
  \begin{align}
    \Omega((x_1,x_2+\ZZ), (y_1,y_2+\ZZ)) &= \ee^{\ii \pi \brac*{x_1 \tilde{y}_2
        + \tilde{x}_2y_1}},
    & F((x_1,x_2+\ZZ), (y_1,y_2+\ZZ),(z_1,z_2+\ZZ)) &=  (-1)^{x_1 \brac*{\widetilde{y_2 + z_2} -
          \tilde{y}_2 - \tilde{z}_2}},\nonumber\\
    q(x_1,x_2+\ZZ) &= \ee^{\ii 2\pi x_1\tilde{x}_2} ,&
    Q(x_1,x_2+\ZZ) &= \ee^{\ii 2\pi \brac*{x_1 - 1}\tilde{x}_2}.
  \end{align}
\end{ex}

\subsection{Categories of Heisenberg  and lattice \voa{} modules}
\label{sec:voas}

Let \(\ldat=(\hvec,\pair{-}{-},\lat,\ffv)\) be a set of \data{}.
Treating $\hvec$ as a real abelian Lie algebra, let
$\hlie=\hvec_{\CC}\otimes \CC\sqbrac*{t,t^{-1}}\oplus\CC\wun$ be the
affinisation of $\hvec_{\CC}$ (the complexification of $\hvec$ with the
bilinear form extended in the obvious way) with respect to the bilinear form $\pair{-}{-}$. This is called the Heisenberg Lie algebra (at level 1). 
For $\alpha \in \mathfrak{h}_\CC$ and \(n\in\ZZ\) denote
$\alpha_n=\alpha\otimes t^n$, then we have
\begin{equation}
  \comm{\alpha_n}{\beta_m} = n \pair{\alpha}{\beta} \delta_{n,-m} \wun, \quad \alpha_n, \beta_m \in \hlie,
\end{equation}
with \(\wun\) central and always taken to act as scalar multiplication by
\(1\) in modules.
We choose the triangular decomposition $\hlie = \hlie_{-} \oplus \hlie_{0}
\oplus \hlie_{+}$ with $\hlie_0 = \mathfrak{h}_\CC\otimes 1\oplus \CC\wun $ and $
\hlie_{\pm}=\spn{\alpha_n\st \alpha\in\hvec_\CC,\ \pm n>0}$. The highest weight modules with respect to
this decomposition (\(\hlie_{-}\) acting freely,
\(\hlie_{+}\) nilpotently and \(\hlie_{0}\) semisimply) are called Fock
spaces
\begin{equation}
  \Fock{\lambda} =
  \Ind{{\hlie}}{{{\hlie_{+}\oplus\hlie_0}}}{\CC\ket{\lambda}},\quad
  \lambda\in\hvec_\CC ,
  \label{eq:Fockdef}
\end{equation}
where
\begin{equation}
  \hlie_{+}\ket{\lambda}=0,\qquad
  \wun\ket{\lambda}=\ket{\lambda},\qquad
  \alpha_0\ket{\lambda}=\pair{\alpha}{\lambda}\ket{\lambda},\quad \alpha\in\hvec_\CC,
\end{equation}
and \(\hlie_{-}\) acts freely. In sequel, any reference to a Fock space \(\Fock{\lambda}\) will
assume the explicit choice of highest weight vector \(\ket{\lambda}\) given in
\eqref{eq:Fockdef}. This explicit choice of highest weight vector will be
required for giving explicit normalisations of intertwining operators.
For the lattice \voa{s} and modules to be considered in this section,
we shall mostly focus on real
weights, that is, \(\lambda\in \hvec\). For any coset \(\mu\in
\Lambda^\ast/\Lambda\) we define the lattice Fock space
\begin{equation}
  \LFock{\mu}=\bigoplus_{\nu \in \mu}\Fock{\nu}.
\end{equation}

\begin{prop}
  The Fock space \(\Fock{0}\) admits the structure of a \voa{} uniquely
  characterised by the choice of field map
  \begin{equation}
    Y(\alpha_{-1}\ket{0}, z) = \alpha(z) = \sum_{n=0}^{\infty} \alpha_{n}
    z^{-n-1},\quad \alpha\in\hvec_\CC,
    \label{eq:fldmap}
  \end{equation}
  and choice of conformal vector
  \begin{equation}
    \omega_{\gamma} = \frac{1}{2} \sum_{i} \alpha^i_{-1} \alpha^{i*}_{-1}\ket{0} +
    \gamma_{-2}\ket{0},\quad \gamma\in \hvec_\CC ,
  \end{equation}
  where \(\set{\alpha^{i}}_{i=1}^{\dim \hvec}\) and
  \(\set{\alpha^{j*}}_{j=1}^{\dim \hvec}\) are any dual choices of basis of \(\hvec_\CC\).
  We denote this \voa{} by \(\hvoa{\gamma}\). For any \(\alpha,\beta\in
  \hvec_\CC\), the \ope{s} of the corresponding fields \(\alpha(z), \beta(z)\) amongst themselves and
  with the conformal field \(T_\gamma(z)=Y(\omega_\gamma,z)\) are
  \begin{equation}
    \alpha(z)\beta(w)\sim \frac{\pair{\alpha}{\beta}}{(z-w)^2},\qquad
    T_\gamma(z)\alpha(w)\sim \frac{-2\pair{\gamma}{\alpha}}{(z-w)^3}+\frac{\alpha(w)}{(z-w)^2}+\frac{\partial \alpha(w)}{z-w},
  \end{equation}
  and the central charge determined by \(\omega_{\gamma}\) is
  \begin{equation}
    c_{\gamma}=\dim \hvec - 12\pair{\gamma}{\gamma}.
  \end{equation}
  Any choice of basis of \(\hvec_\CC\) is a set of strong generators of
  \(\hvoa{\gamma}\). For any \(\alpha\in\hvec_\CC\), the Fock space
  \(\Fock{\alpha}\) is a module over \(\hvoa{\gamma}\) with field map
  \(Y_\alpha\) characterised by the same formula \eqref{eq:fldmap} as the
  field map of \(\hvoa{\gamma}\) acting on itself.
\end{prop}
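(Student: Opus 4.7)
The plan is to first invoke the reconstruction (existence-and-uniqueness) theorem for vertex algebras to build the vertex algebra structure on $\Fock{0}$, and then to verify by direct Wick contraction that $\omega_\gamma$ defines a Virasoro field with the claimed central charge.

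For the first step, on $\Fock{0}$ the Heisenberg modes satisfy $[\alpha_m,\beta_n]=m\pair{\alpha}{\beta}\delta_{m+n,0}$ with $\alpha_n\ket{0}=0$ for $n\ge 0$, and by the PBW theorem the monomials in the negative modes applied to $\ket{0}$ form a basis. These commutation relations are equivalent to mutual locality of the fields $\alpha(z)$ together with the claimed singular OPE $\alpha(z)\beta(w)\sim\pair{\alpha}{\beta}/(z-w)^2$. Defining $T\ket{0}=0$ and $[T,\alpha_n]=-n\alpha_{n-1}$ yields a translation operator with $[T,\alpha(z)]=\partial_z\alpha(z)$ and $\alpha(z)\ket{0}\vert_{z=0}=\alpha_{-1}\ket{0}$; these are exactly the hypotheses of the vertex-algebra reconstruction theorem, which supplies a unique vertex algebra structure on $\Fock{0}$ with the given field map and immediately implies strong generation by any basis of $\hvec_\CC$.

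For the conformal structure, the translation axiom $Y(Ta,z)=\partial Y(a,z)$ combined with $T\gamma_{-1}\ket{0}=\gamma_{-2}\ket{0}$ gives $Y(\omega_\gamma,z)=T_\gamma(z)=\frac{1}{2}\sum_i :\alpha^i(z)\alpha^{i*}(z): + \partial\gamma(z)$. Writing $T_\gamma=T_0+\partial\gamma$ with $T_0$ denoting the Sugawara piece, I expand $T_\gamma(z)T_\gamma(w)$ into four summands. The Sugawara--Sugawara term produces the standard Virasoro OPE with central term $(\dim\hvec/2)(z-w)^{-4}$, using only the dual-basis identity $\sum_i\pair{\alpha^i}{\beta}\alpha^{i*}=\beta$. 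The cross terms $T_0(z)\partial\gamma(w)+\partial\gamma(z)T_0(w)$, obtained by differentiating the primary-field OPE $T_0(z)\gamma(w)\sim\gamma(w)/(z-w)^2+\partial\gamma(w)/(z-w)$, shift the quadratic- and linear-pole coefficients from $T_0$ to $T_\gamma$, and their $(z-w)^{-3}$ contributions cancel against each other after antisymmetrisation. Finally, the self-contraction $\partial\gamma(z)\partial\gamma(w)=\partial_z\partial_w\brac*{\pair{\gamma}{\gamma}/(z-w)^2}\sim -6\pair{\gamma}{\gamma}/(z-w)^4$ shifts the central term to $c_\gamma/2=(\dim\hvec-12\pair{\gamma}{\gamma})/2$, as required. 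The OPE $T_\gamma(z)\alpha(w)$ decomposes analogously into the Sugawara contribution (which makes $\alpha$ a weight-one primary) and the correction $\partial\gamma(z)\alpha(w)\sim -2\pair{\gamma}{\alpha}/(z-w)^3$, matching the claimed formula.

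The module structure on each $\Fock{\alpha}$, $\alpha\in\hvec_\CC$, follows from the weak-module version of the reconstruction theorem: the operators $\alpha_n$ act on $\Fock{\alpha}$ with the same Heisenberg relations, so the generating fields $\alpha(z)$ are mutually local on $\Fock{\alpha}$ with the same OPEs, and the formula $Y_\alpha(\alpha_{-1}\ket{0},z)=\alpha(z)$ extends uniquely by normal ordering; the $L_0$-grading coming from the Fock-space conformal weight then promotes this to a genuine $\hvoa{\gamma}$-module. The main technical obstacle is the bookkeeping of Wick contractions in the $T_\gamma(z)T_\gamma(w)$ OPE, and in particular the verification that the $(z-w)^{-3}$ poles arising from the cross terms cancel; this is routine but requires care in tracking signs and in reordering the derivative terms.
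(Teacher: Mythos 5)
The paper states this proposition without proof — it is a standard, well-known construction of the Heisenberg (Feigin--Fuchs) vertex operator algebra, so there is no argument in the paper to compare against. Your proof takes the standard route: the Goddard reconstruction theorem establishes the vertex algebra structure and strong generation from the single Heisenberg OPE, and then the Virasoro structure is confirmed by direct OPE computation. I checked your Wick-contraction bookkeeping and it is correct: the Sugawara--Sugawara term contributes $\tfrac{1}{2}\dim\hvec$ to the quartic pole; the self-contraction $\partial\gamma(z)\,\partial\gamma(w)\sim -6\pair{\gamma}{\gamma}(z-w)^{-4}$ gives the remaining $-6\pair{\gamma}{\gamma}$, hence $c_\gamma=\dim\hvec-12\pair{\gamma}{\gamma}$; the cross terms $T_0(z)\partial\gamma(w)$ and $\partial\gamma(z)T_0(w)$ indeed contribute $+2\gamma(w)(z-w)^{-3}$ and $-2\gamma(w)(z-w)^{-3}$ respectively, which cancel, leaving exactly the $2\partial\gamma(w)(z-w)^{-2}+\partial^2\gamma(w)(z-w)^{-1}$ pieces needed to promote $T_0$ to $T_\gamma$ on the right-hand side; and $\partial\gamma(z)\alpha(w)\sim -2\pair{\gamma}{\alpha}(z-w)^{-3}$ gives the anomalous term in the primary OPE. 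The module step via the weak-module reconstruction theorem is also fine. The argument is complete and correct.
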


Let \(\CC\sqbrac*{\hvec_\CC}\) be the group algebra of \(\hvec_\CC\) seen as
an abelian group under addition and denote the basis element corresponding to
any group element \(\alpha\in\hvec_\CC\) by \(\ee^\alpha\). 
To each such basis vector we assign a linear map \(\ee^\alpha\), called a shift operator,
\begin{align}
  \ee^\alpha:\Fock{\gamma}&\to\Fock{\alpha+\gamma},\nonumber\\
  p\ket{\gamma}&\mapsto p\ket{\alpha+\gamma} ,
  \label{eq:shiftop}
\end{align} 
where \(p\in \UEA{\hlie_{-}}\). 
Further let
\begin{equation}
  E^{\pm} (\alpha,x) = \exp{\brac*{\mp \sum_{n=1}^{\infty} \frac{\alpha_{\pm
          n}}{n} x^{\mp n}}},\quad
  U(p,\alpha,x)=E^-(\alpha,x)Y(p,x)E^+(\alpha,x),\qquad
  \alpha\in \hvec_\CC,\ p\in \UEA{\hlie_-}.
  \label{eq:heisseries}
\end{equation}
Then we define linear maps $I_{\mu,\nu}: \Fock{\mu} \otimes \Fock{\nu} \ra
\Fock{\mu+\nu} \powser{z,z^{-1}}z^{\pair{\mu}{\nu}}$, for $\mu, \nu \in \hvec_\CC$
by 
\begin{align} 
  I_{\mu,\nu}\brac*{ p\ket{\mu}, z} q\ket{\nu} &= z^{\pair{\mu}{\nu}}\ee^{\mu}U(p,\alpha,z)q\ket{\nu}\nonumber\\
  &=z^{\pair{\mu}{\nu}}\ee^{\mu} E^- (\mu,z) Y\brac*{p,z} E^+ (\mu, z) q\ket{\nu},\qquad p,q\in \UEA{\hlie_-},
  \label{eq:utwvop}
\end{align}
where \(Y\brac*{p,z}\) is the series of Heisenberg algebra valued coefficients
obtain by expanding the field map \(Y(p\ket{0},z)\) in the \voa{} \(\hvoa{\gamma}\).
The linear maps \(I_{\mu,\nu}\) are generally known as (chiral)
vertex operators in theoretical physics
literature and are called untwisted vertex operators in \cite{DoLGVA93}.

\begin{prop}
  Let \(\mu,\nu,\rho,\in \hvec_\CC\), then
  \begin{equation}
    \dim \ispc{\Fock{\mu}}{\Fock{\nu}}{\Fock{\rho}}=
    \begin{cases}
      1& \rho=\mu+\nu\\
      0& \rho\neq\mu+\nu
    \end{cases}
  \end{equation}
  and \(I_{\mu,\nu}\) is an intertwining operator of type \(\binom{\Fock{\mu+\nu}}{\Fock{\mu},\Fock{\nu}}\).
\end{prop}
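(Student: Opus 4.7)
The plan is to establish the dimension bound and then verify that the explicit map $I_{\mu,\nu}$ realises an intertwining operator of the claimed type.

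For the upper bound, let $\mathcal{Y} \in \ispc{\Fock{\mu}}{\Fock{\nu}}{\Fock{\rho}}$ be any intertwining operator. Applying the Jacobi identity \eqref{eq:intjac} with $v = \alpha_{-1}\ket{0}$ for $\alpha \in \hvec_\CC$ and extracting $\Res_{z_0}$ yields the family of commutator identities
\begin{equation}
  [\alpha_n, \mathcal{Y}(u, z)] = \sum_{k \geq 0} \binom{n}{k} z^{n-k} \mathcal{Y}(\alpha_k u, z), \qquad n \in \ZZ.
\end{equation}
Evaluating the $n = 0$ case on the highest-weight vectors $\ket{\mu}$ and $\ket{\nu}$ shows that $\mathcal{Y}(\ket{\mu}, z)\ket{\nu}$ is an $\alpha_0$-eigenvector with eigenvalue $\pair{\alpha}{\mu+\nu}$ for every $\alpha \in \hvec_\CC$, forcing vanishing unless $\rho = \mu + \nu$. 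In the case $\rho = \mu + \nu$, the commutators for $n \neq 0$ together with the $L_{-1}$-derivation axiom propagate the value of $\mathcal{Y}$ through the free action of $\UEA{\hlie_-}$ on both factors, so $\mathcal{Y}$ is determined by its single leading coefficient on $\ket{\mu} \otimes \ket{\nu}$, which lies in the one-dimensional space $\CC\ket{\mu+\nu}$. Hence $\dim \ispc{\Fock{\mu}}{\Fock{\nu}}{\Fock{\mu+\nu}} \leq 1$.

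For existence, we verify that $I_{\mu,\nu}$ from \eqref{eq:utwvop} satisfies the three axioms. Lower truncation and the landing in $\Fock{\mu+\nu}\set{z}$ are immediate from the series expansions of $E^\pm(\mu, z)$, the definition of the shift operator $\ee^\mu$ in \eqref{eq:shiftop}, and the prescribed interpretation of $z^{\pair{\mu}{\nu}}$ via a fixed branch of logarithm. The $L_{-1}$-derivation axiom follows from the standard derivation formulas for the Heisenberg exponentials $E^\pm(\mu, z)$ combined with $[L_{-1}, \ee^\mu] = \mu_{-1}\ee^\mu$ and $\tfrac{d}{dz}z^{\pair{\mu}{\nu}} = \pair{\mu}{\nu}z^{\pair{\mu}{\nu}-1}$. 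The main technical step is the Jacobi identity; it reduces to the classical commutation relation
\begin{equation}
  E^+(\alpha, z_1) E^-(\mu, z_2) = \brac*{1 - z_2/z_1}^{\pair{\alpha}{\mu}} E^-(\mu, z_2) E^+(\alpha, z_1),
\end{equation}
valid as a formal series in the region $|z_1| > |z_2|$. This identity produces the $z^{\pair{\mu}{\nu}}$ factor in \eqref{eq:utwvop} and matches the two expansions of the $z_0^{-1}$-delta functions in \eqref{eq:intjac}; the detailed verification follows the standard pattern for untwisted vertex operators, for which we refer to \cite{DoLGVA93}.

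The main obstacle is the Jacobi identity itself, specifically reconciling the singular expansions of $\brac*{1 - z_2/z_1}^{\pair{\alpha}{\mu}}$ in the two regions $|z_1| > |z_2|$ and $|z_2| > |z_1|$ with the delta function identity at $z_1 = z_2$. Since $\pair{\alpha}{\mu}$ is generally not an integer, these expansions are infinite rather than rational, but once a branch of logarithm is fixed consistently, their difference is a formal distribution supported on the diagonal of exactly the form required. The rest of the verification is routine bookkeeping of normal-ordered exponentials against the shift operators.
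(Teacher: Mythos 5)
The paper states this proposition without any proof at all, treating it as a standard fact about Heisenberg Fock spaces and deferring the details of untwisted vertex operators to \cite{DoLGVA93}. Your argument is correct and is exactly the expected standard route: you extract the zero-mode commutator from the Jacobi identity to pin down the Heisenberg weight $\rho = \mu + \nu$, use the remaining mode commutators together with $L_{-1}$-derivation to reduce any intertwiner to the value of its leading coefficient on $\ket{\mu}\otimes\ket{\nu}$ (a one-dimensional space), and then check that $I_{\mu,\nu}$ realises a nonzero element.

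Two small caveats. First, $\ispc{\Fock{\mu}}{\Fock{\nu}}{\Fock{\rho}}$ is by definition the space of \emph{logarithmic} intertwining operators, so your reduction to a single power series coefficient implicitly needs the observation that no $\log z$ terms can occur; this follows because $L_0$ acts semisimply on all three Fock modules, so the $L_{-1}$-derivation property forces the logarithmic coefficients $(m_1)_{t,s}$ for $s\ge 1$ to vanish. Second, your worry about choosing a consistent branch of logarithm is misplaced in the formal-calculus framework used by HLZ and the paper: $z^{\pair{\mu}{\nu}}$ is a formal symbol, only $z_0$ and $z_1$ carry integer powers in the Jacobi identity when $v\in V$, and the delta-function identities used in verifying \eqref{eq:intjac} for $I_{\mu,\nu}$ never require analytic continuation across a branch cut — only the familiar pair of binomial expansions of $(z_1-z_2)^{\pair{\alpha}{\mu}}$ in the two different directions, both of which are formal. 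Apart from that cosmetic point your proof is a faithful account of the standard argument.
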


Lattice \voa{s} are constructed from Heisenberg \voa{s} by taking the
underlying vector space to be a sum over Fock spaces whose weights lie in a
lattice. The field maps for vectors lying in Fock spaces with non-zero weight
are then constructed from the untwisted intertwining operators \(I_{\mu,\nu}\) above.
As can be seen from the definitions of modules and intertwining operators, and
the unit isomorphism conditions \eqref{eq:unitmor}, the field maps encoding
the action of a \voa{} on its modules are a special case of an intertwining
operator with a canonical choice of normalisation. General intertwining
operators, however, have no obvious choice of normalisation. So in order to
extend a Heisenberg \voa{} to a lattice \voa{}, one needs to specify
normalisations. These normalisations need to be compatible with the vacuum,
skew-symmetry and associativity properties of \voa{s}, which implies that they
satisfy the defining properties of the 2-cocycles \(\tcycsymb\) in
\eqref{eq:voacocycle}. As previously noted all choices of 2-cocycle are
cohomologous and hence give rise to isomorphic lattice \voa{s} \cite[Chapter 5]{Fre88}.

\begin{prop} Let \(\rffv\) be a choice of representative of \(\ffv\).
  \begin{enumerate}
  \item The lattice Fock space
    \(\LFock{\Lambda}=\bigoplus_{\alpha\in \Lambda}\Fock{\alpha}\) admits the
    structure of a \voa{}, uniquely characterised by the choice of field map
    \begin{equation} 
      \left.Y\right|_{\Fock{\mu}\otimes\Fock{\nu}}=\tcyc{\mu}{\nu}
      I_{\mu,\nu},\quad \mu,\nu\in \lat,
      \label{eq:laction}
    \end{equation}
    (note that on \(\Fock{0}\) this specialises to the field map of the
    Heisenberg \voa{}) and choice of conformal vector
    \begin{equation}
      \omega_{\rffv} = \frac{1}{2} \sum_{i} \alpha^i_{-1} \alpha^{i*}_{-1}\ket{0} +
      \rffv_{-2}\ket{0},\quad \rffv\in \dlat,
    \end{equation}
    where \(\set{\alpha^{i}}_{i=1}^{\dim \hvec}\) and
    \(\set{\alpha^{j*}}_{j=1}^{\dim \hvec}\) are any dual choices of basis of
    \(\hvec_\CC\).  We denote this \voa{} by \(\lvoa{\rffv}{\Lambda}\). The
    central charge determined by \(\omega_{\rffv}\) is
    \begin{equation}
      c_{\rffv}=\dim \hvec - 12\pair{\rffv}{\rffv}.
    \end{equation}
  \item 
    The zero modes of \(Y(\alpha_{-1}\ket{0},z)\), \(\alpha\in \lat\) furnish
    \(\lvoa{\rffv}{\Lambda}\) with a \(\Lambda\)-grading. 
  \item For any \(\rho\in \Lambda^\ast/\Lambda\), the lattice Fock space
    \(\LFock{\rho}\) equipped with the field map
    \begin{equation}
      \left.Y_{\LFock{\rho}}\right|_{\Fock{\mu}\otimes\Fock{s(\rho)+\nu}}=\tcyc{\mu}{\nu}
      I_{\mu,s(\rho)+\nu},\quad \mu,\nu \in \lat,
      \label{eq:modaction}
    \end{equation}
    is a simple discreetly strongly \(\Lambda^\ast\)-graded generalised
    \(\lvoa{\rffv}{\Lambda}\) module. The conformal weight of the highest
    weight vector \(\ket{\mu}\) of a Fock space direct summand
    \(\Fock{\mu}\), \(\mu\in\rho\) is
    \begin{equation}
      h_\mu=\frac{1}{2}\pair{\mu}{\mu-2\rffv}.
      \label{eq:confwt}
    \end{equation}
  \item Every lattice Fock space \(\LFock{\rho},\ \rho\in
    \Lambda^\ast/\Lambda\), is graded \cfin{1} as a module over the
    Heisenberg \voa{} \(\hvoa{\beta}\). 
  \end{enumerate}
\end{prop}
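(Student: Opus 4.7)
The plan is to treat the four parts in turn; Part 1 (the VOA structure) contains the bulk of the work, while Parts 2--4 reduce to direct computations on the top vectors $\ket{\mu}$ of Fock summands. For Part 1, I would verify the VOA axioms by splitting the inputs of $Y$ according to the Fock summands. On $\Fock{0}\otimes\Fock{0}$ the formula \eqref{eq:laction} reduces to the Heisenberg VOA, which is already known to be a VOA, and on $\Fock{0}\otimes\Fock{\nu}$ the axioms follow from the intertwining property of $I_{0,\nu}$ together with the module axioms of $\Fock{\nu}$. The delicate case is the Jacobi identity for two arguments $v\in\Fock{\mu}$, $w\in\Fock{\nu}$ with $\mu,\nu\in\Lambda\setminus\set{0}$. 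Here the product and opposite product of the exponentials $E^{\pm}$ and shift operators $\ee^{\alpha}$ of \eqref{eq:utwvop} differ by the factor $(-1)^{\pair{\mu}{\nu}}$, which is precisely compensated by the commutator condition $\tcyc{\mu}{\nu}\tcyc{\nu}{\mu}^{-1}=(-1)^{\pair{\mu}{\nu}}$ in \eqref{eq:voacocycle}, while the remaining cocycle identities of \eqref{eq:voacocycle} supply the associativity required. The central charge $c_{\rffv}=\dim\hvec-12\pair{\rffv}{\rffv}$ emerges from the OPE of $T_{\rffv}(z)T_{\rffv}(w)$ in the standard way: the Sugawara summand contributes $\dim\hvec$ to the quartic pole and the cross terms involving $\rffv_{-2}\ket{0}$ produce the shift $-12\pair{\rffv}{\rffv}$.

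For Part 2, I would set $V^{(\mu)}=\Fock{\mu}$ for $\mu\in\Lambda$; grading compatibility is immediate because $I_{\mu,\nu}$ takes values in $\Fock{\mu+\nu}\powser*{z,z^{-1}}z^{\pair{\mu}{\nu}}$ by its definition \eqref{eq:utwvop} via shift operators. This grading coincides with the joint eigenspace decomposition for the commuting semisimple zero modes $\alpha_0$, $\alpha\in\Lambda$, which act as the scalars $\pair{\alpha}{\mu}$ on $\Fock{\mu}$ and separate $\Lambda$-cosets in $\dlat$ by non-degeneracy of $\pair{-}{-}$ on $\Lambda\times\dlat$. Part 3 is structurally identical to Part 1, with the same cocycle bookkeeping producing the weak module axioms for $\LFock{\rho}$ via \eqref{eq:modaction}. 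The $\Lambda^{\ast}$-grading $(\LFock{\rho})^{(\mu)}=\Fock{\mu}$ (for $\mu\in\rho$, zero otherwise) is then compatible, and discrete strong gradation follows because on each Fock summand $L_0$ equals the Sugawara operator shifted by the scalar $-\pair{\rffv}{\mu}$, hence is semisimple with eigenvalues in $h_\mu+\NN$ and finite-dimensional eigenspaces controlled by partition numbers. The conformal weight $h_\mu=\tfrac{1}{2}\pair{\mu}{\mu-2\rffv}$ falls out of evaluating $(\omega_{\rffv})_1\ket{\mu}$: the Sugawara part gives $\tfrac{1}{2}\pair{\mu}{\mu}$ and the mode identity $(\rffv_{-2}\ket{0})_1=-\rffv_0$ together with $\rffv_0|_{\Fock{\mu}}=\pair{\rffv}{\mu}\id$ contributes $-\pair{\rffv}{\mu}$. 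Simplicity follows because the zero modes of the lattice vertex operators act (up to non-zero scalars from $\tcycsymb$) as the shift operators $\ee^{\alpha}$, $\alpha\in\Lambda$, carrying any top vector $\ket{\mu}$ to every $\ket{\mu+\alpha}$, combined with the classical simplicity of Fock spaces over the Heisenberg subalgebra.

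For Part 4, since $\hvoa{\beta}$ acts within each Fock summand, the graded $C_1$-subspace of $\LFock{\rho}$ at grade $\mu$ coincides with $C_1(\Fock{\mu})$ computed inside the Heisenberg module $\Fock{\mu}$. For each $n\ge 1$ and each basis vector $\alpha^i$, the element $\alpha^i_{-n}\ket{0}\in\hvoa{\beta}$ has conformal weight $n>0$ (the Feigin-Fuchs correction $-\beta_0$ annihilates $\ket{0}$ and commutes past $\alpha^i_{-n}$), and a short calculation with $Y(\alpha^i_{-n}\ket{0},z)=\tfrac{1}{(n-1)!}\partial^{n-1}\alpha^i(z)$ shows that its $(-1)$-mode acts on any Fock space as the operator $\alpha^i_{-n}$. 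Iterating, every monomial $\alpha^{i_1}_{-n_1}\cdots\alpha^{i_k}_{-n_k}\ket{\mu}$ with $k\ge 1$ lies in $C_1(\Fock{\mu})$, so $\Fock{\mu}/C_1(\Fock{\mu})$ is spanned by $\ket{\mu}$ and is in particular finite dimensional. The principal technical obstacle throughout is the cocycle and phase bookkeeping in the Jacobi identity of Part 1; the remaining steps amount to direct mode computations on highest weight vectors.
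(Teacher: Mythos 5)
Your proposal is mathematically sound, but it takes a genuinely different route from the paper. The paper's proof of Parts 1 and 3 simply cites Li--Wang \cite[Theorem 3.6, Remark 3.7]{Li09} for the existence and uniqueness (up to cohomology of the $2$-cocycle) of the vertex algebra structure on $\LFock{\lat}$ and of the module structure on $\LFock{\rho}$, adding only the remarks that integrality of the grading forces $\rffv\in\dlat$ and that any two normalised $2$-cocycles with the prescribed commutator are cohomologous. You instead propose a direct verification of the VOA and module axioms, reducing the Jacobi identity to the commutation and associativity behaviour of the operators $E^{\pm}(\mu,z)\ee^{\mu}$ of \eqref{eq:utwvop} and observing that the commutator factor $(-1)^{\pair{\mu}{\nu}}$ is cancelled by the cocycle condition $\tcyc{\mu}{\nu}\tcyc{\nu}{\mu}^{-1}=(-1)^{\pair{\mu}{\nu}}$. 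This is precisely the content of Li--Wang's theorem, so your route is self-contained where theirs is by citation; the trade-off is that a full direct verification of the Jacobi identity (rather than its reduction to weak commutativity and associativity) is a substantial computation that you only sketch, and you omit the uniqueness remark about cohomologous cocycles that the paper makes explicit. Your arguments for Parts 2--4 coincide with the paper's in substance: the $\Lambda$-grading by $\alpha_0$-eigenvalues, the conformal weight formula from $(\omega_{\rffv})_1\ket{\mu}$, discreteness from the partition structure of Fock spaces, and $C_1$-cofiniteness from the codimension-one $C_1$-subspace of each $\Fock{\mu}$. You also supply a simplicity argument for Part 3 via shift operators and Heisenberg irreducibility, which the paper does not spell out but which is standard and correct.
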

\begin{proof}
  \begin{enumerate}
  \item The existence of the \va{} structure on
    \(\LFock{\lat}\) was shown in \cite[Theorem 3.6, Remark 3.7]{Li09}. Note
    that this \va{} structure is also unique in the sense that all
    choices of normalised 2-cocycles are cohomologous and yield isomorphic
    \va{s}. The restriction of \(\rffv\) to \(\dlat\) is equivalent to
    requiring that the grading of \(\LFock{\lat}\) be integral.
  \item This follows by construction.
  \item That the lattice Fock space \(\LFock{\rho}\) is a module follows from \cite[Theorem
    3.6]{Li09}. Each doubly homogeneous space of \(\LFock{\rho}\) is just an
    \(L_0\) eigenspace of one of the underlying Fock spaces \(\Fock{\mu},\
    \mu\in\rho\). Since these eigenspaces are all finite dimensional, the
    doubly homogeneous spaces are too. 
    Formula \eqref{eq:confwt} follows by direct computation and implies that
    all conformal weights are real
    and that the Fock spaces \(\Fock{\mu}\) are discretely strongly graded. 
    Hence the \(\LFock{\rho}\) are also discretely strongly graded.
  \item The \(\dlat\) homogeneous spaces of lattice Fock spaces are just the
    ordinary Fock spaces. These are all \cfin{1} over \(\hvoa{\rffv}\)
    because the \(C_1\) subspace has codimension 1.
  \end{enumerate}
\end{proof}
\begin{rmk}
  Note that the conformal structure of \(\lvoa{\rffv}{\Lambda}\)
  genuinely depends on the choice of vector \(\rffv\in \dlat\) rather than its
  coset \(\ffv=\rffv+\lat\in \qlat\). For example, shifting \(\rffv\) by some
  \(\alpha\in\lat\) will generally give a different central charge. It will
  also shift the conformal weight of any lattice module by some integer. However,
  the \rgv{} structure of the module category to be defined below
  will only depend on the coset \(\ffv\) (specifically, the dualising object
  and the twist depend on \(\ffv\), the associativity and braiding isomorphisms do
  not), rather than a choice of
  representative of this coset.
\end{rmk}

\begin{defn}
  For any set of \data{} \(\ldat=(\hvec,\pair{-}{-},\lat,\ffv)\) and a
  representative \(\rffv\in\ffv\),
  let $\lvmd{\ldat}$ be the full subcategory of generalised \(\Lambda^\ast\)-graded
  $\lvoa{\rffv}{\Lambda}$-modules whose objects are finitely generated, with
  $\hlie_{+}$ acting locally nilpotently and \(\hvec\) acting semisimply with real eigenvalues. 
\end{defn}

\begin{prop}
  The category $\lvmd{\ldat}$ is linear, abelian and semisimple.
  The lattice Fock spaces \(\LFock{\mu}\), \(\mu\in \Lambda^\ast/\Lambda\) form a complete set of
  mutually inequivalent representatives of isomorphism classes of simple objects.
  Further, the category $\lvmd{\ldat}$ satisfies all of the conditions of \cref{thm:simpsufconds},
  and therefore admits the braided monoidal structure of \cref{thm:tenstr} and the
  \rgv{} structure of \cref{thm:gvstr}. 
  \label{thm:gvboson}
\end{prop}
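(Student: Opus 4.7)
The plan is to first establish the structural decomposition of objects, then verify the four conditions of \cref{thm:simpsufconds} one at a time. For the structural decomposition, I would start by showing that any $M \in \lvmd{\ldat}$ is a direct sum of lattice Fock spaces $\LFock{\mu}$. By hypothesis $\hvec$ acts semisimply with real eigenvalues, so $M$ admits a weight decomposition with respect to $\hvec$, and by local nilpotence of $\hlie_+$ every weight vector generates a highest-weight Heisenberg module, hence a quotient of some Verma-type Fock module; since Fock modules $\Fock{\lambda}$ are simple as Heisenberg modules for generic weights (and standard arguments cover all $\lambda \in \hvec$), each such submodule is a copy of $\Fock{\lambda}$. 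The lattice zero modes $\ee^{\alpha}$ for $\alpha \in \lat$ permute Fock spaces within a single coset of $\qlat$, and finite generation caps the number of cosets appearing. Combined with the $\dlat$-grading this forces $M \cong \bigoplus_{\mu \in \qlat} \LFock{\mu}^{\oplus n_\mu}$ with finitely many $n_\mu \neq 0$. Since each $\LFock{\mu}$ is simple (any nonzero Heisenberg submodule contains a weight vector, and applying the lattice shifts generates the whole lattice Fock space), this gives both the semisimplicity of $\lvmd{\ldat}$ and the classification of simples.

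For condition \ref{itm:graddual} of \cref{thm:simpsufconds}, discrete strong gradedness is already asserted for each $\LFock{\mu}$ in the preceding proposition and is preserved under finite direct sums. Closure under contragredients reduces to computing $\LFock{\mu}'$. A direct calculation using the opposed field map \eqref{eq:opfieldmap} together with the conformal vector $\omega_{\rffv}$ (specifically $L_0 \alpha_{-1}\ket{0} = \alpha_{-1}\ket{0}$ and $L_1 \alpha_{-1}\ket{0} = -2\pair{\rffv}{\alpha}\ket{0}$) shows that the Heisenberg zero mode acts on $\Fock{\lambda}'$ with eigenvalue $2\rffv - \lambda$, and that the weight space at $\ket{\lambda}$ is nonzero. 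One concludes $\LFock{\mu}' \cong \LFock{2\ffv - \mu}$, which is again in $\lvmd{\ldat}$.

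Condition \ref{itm:real} of \cref{thm:simpsufconds} (conformality of $L_0$) follows from the explicit expression $L_0 = \frac{1}{2}\sum_i \normord{\alpha^i_0 \alpha^{i*}_0} - \rffv_0 + \sum_{k>0} \sum_i \alpha^i_{-k}\alpha^{i*}_{k}$ obtained from $\omega_{\rffv}$: on any Fock space the first three terms are scalars and the last commutes with the natural grading by $L_0$-eigenvalues, so $L_0$ acts semisimply. Condition \ref{itm:c1} follows by taking $\overline{V} = \hvoa{\rffv} \subset \lvoa{\rffv}{\lat}^{(0)}$: each $\dlat$-homogeneous component of $\LFock{\mu}$ is a single Heisenberg Fock space $\Fock{\mu+\alpha}$, and $C_1(\Fock{\mu+\alpha}) = \hlie_{-\le -1}\Fock{\mu+\alpha}$ has codimension one (spanned by the highest-weight vector), giving graded $C_1$-cofiniteness.

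The main obstacle will be condition \ref{itm:fus} on submodules of $\comp{\Mmod_1}{\Mmod_2}$. Given a lower-bounded submodule $N \subseteq \comp{\Mmod_1}{\Mmod_2}$ that is finitely generated by doubly homogeneous vectors, one must verify $N \in \lvmd{\ldat}$. The approach is to transfer the structural conditions from $\Mmod_1, \Mmod_2$ (which already decompose into lattice Fock spaces by the first paragraph) to $\comp{\Mmod_1}{\Mmod_2}$ via the transported action: the Heisenberg operators $\alpha_n$ act on elements of $\comp{\Mmod_1}{\Mmod_2} \subset (\Mmod_1 \otimes \Mmod_2)^\ast$ through the opposed field map, and on doubly homogeneous functionals they preserve the grading by $\dlat$ and by generalised $L_0$-eigenvalues. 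Semisimplicity of $\hvec$ on doubly homogeneous vectors is then inherited, and reality of eigenvalues is likewise inherited. The delicate point is showing that $\hlie_+$ acts locally nilpotently on $N$: here one uses that $N$ is finitely generated by doubly homogeneous vectors together with the lower-bounded hypothesis, since raising a doubly homogeneous vector by $\alpha_n$ ($n > 0$) strictly decreases the $L_0$-eigenvalue, and lower boundedness together with integrality (modulo $\lat$) of the $L_0$-spacings within a fixed $\dlat$-grade eventually produces zero. Once $N$ is shown to satisfy all the defining properties of $\lvmd{\ldat}$, the result follows. Applying \cref{thm:simpsufconds} then gives both the braided monoidal structure of \cref{thm:tenstr} and, since $\lvoa{\rffv}{\lat}$ and its contragredient lie in the category, the ribbon \gv{} structure of \cref{thm:gvstr}.
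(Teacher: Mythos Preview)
Your proposal is correct and follows the same overall architecture as the paper: establish semisimplicity and the classification of simples first, then run through the four conditions of \cref{thm:simpsufconds}, with the first three being routine and the fourth requiring real work.

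The genuine difference lies in how you handle condition~\ref{itm:fus}. The paper passes to the dual picture: it invokes \cite[Part IV, Proposition 5.24]{HuaLog} to convert a submodule $N\subset\comp{\LFock{\mu}}{\LFock{\nu}}$ into a surjective intertwining operator $\mathcal{Y}$ of type $\binom{N'}{\LFock{\mu},\LFock{\nu}}$, and then uses explicit Jacobi-identity computations (for $\alpha_0$ and for the combination $\alpha_n - x\alpha_{n-1}$) to transfer semisimplicity of $\hvec$ and local nilpotency of $\hlie_+$ from the two source lattice Fock spaces to the image of $\mathcal{Y}$. Your approach instead stays inside $\comp{\Mmod_1}{\Mmod_2}$: the $\dlat$-grading is preserved by the transported action, so $\hvec$-semisimplicity on the submodule generated by doubly homogeneous vectors is automatic, and local nilpotency of $\alpha_n$ ($n>0$) follows directly from lower-boundedness since these modes strictly lower $L_0$-weight within a fixed $\dlat$-grade. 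Your route is shorter and avoids the HLZ citation; the paper's route makes the mechanism (commutation through an intertwining operator) more visible and generalises more readily to situations where the grading argument is less transparent.

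One minor imprecision in your semisimplicity paragraph: not every weight vector generates a highest-weight Heisenberg module. You should first use local nilpotency of $\hlie_+$ to locate a genuine highest-weight vector, then invoke simplicity of Heisenberg Fock spaces (the paper cites an algebraic Stone--von Neumann theorem for this) to split off a copy of $\Fock{\lambda}$ and iterate. The paper phrases this step by working with an indecomposable $M$ and arguing that each $\dlat$-homogeneous space must be a single Fock space, which is equivalent to what you are doing.
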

\begin{proof}
  The category $\lvmd{\ldat}$ is clearly linear and abelian by
  construction. We first show semisimplicity. Let \(M\in
  \lvmd{\ldat}\) be indecomposable. Since \(\hvec\) is required to
  act semisimply and real, \(M\) must be \(\hvec\) graded. Further, in order
  for \(M\) to be a $\lvoa{\rffv}{\Lambda}$-module all fields in
  $\lvoa{\rffv}{\Lambda}$ must have integral exponents when expanded on
  \(M\). Hence \(M\) is \(\Lambda^\ast\) graded and
  its \(\Lambda^\ast\) homogeneous spaces are modules over
  \(\hvoa{\rffv}\) by restriction. Since \(\lvoa{\rffv}{\Lambda}\) is
  \(\Lambda\)-graded, homogenous spaces of \(M\) corresponding to elements in
  \(\Lambda^\ast\) which are in different cosets of \(\Lambda\)
  cannot mix under the action of
  \(\lvoa{\rffv}{\Lambda}\). Since \(M\) is indecomposable the weights of non-zero \(\Lambda^\ast\)
  homogeneous spaces of \(M\) must all lie in the same \(\Lambda\) coset.
  Local nilpotence of $\hlie_{+}$ and semisimple action of \(\hvec\) then implies
  by an algebraic version of the Stone-von Neumann theorem \cite[Prop
  3.6]{LepRam82} that each \(\Lambda^\ast\) homogeneous space of \(M\) is a
  semisimple \(\hvoa{\rffv}\) module and a possibly infinite direct sum of
  Fock spaces. So assume there exists a direct
  sum decomposition \(M^{(\mu)}=A\oplus B\) of the homogeneous space of weight \(\mu\in \dlat\) into
  non-zero but not necessarily simple \(\hvoa{\rffv}\) modules \(A,B\).
  Then the \(\lvoa{\rffv}{\Lambda}\) submodules of \(M\) generated by \(A\)
  and \(B\) would intersect trivially and hence provide a direct sum
  decomposition of \(M\), contradicting indecomposability.
  Thus every non-trivial homogeneous space of \(M\) is isomorphic to a single Fock
  space of the same weight.
  The module \(M\) is therefore isomorphic to a lattice Fock
  space and hence simple. Further, lattice Fock spaces form a complete set of
  mutually inequivalent simple objects. Here we implicitly use the uniqueness
  of module structures on lattice Fock spaces which was shown in \cite[Proposition 4.2]{Li09}.

  The first three conditions of \cref{thm:simpsufconds} clearly hold and so we
  only need to verify the fourth. Consider two lattice Fock spaces
  \(\LFock{\mu},\LFock{\nu}\), \(\mu,\nu\in \qlat\) and let \(M\) be a
  finitely generated lower bounded submodule of
  \(\comp{\LFock{\mu}}{\LFock{\nu}}\). We need to verify that \(M\) is an
  object in \(\lvmd{\ldat}\). Since \(\lvmd{\ldat}\) is closed under
  contragredients, this is equivalent to \(M^\prime\) being in
  \(\lvmd{\ldat}\). By \cite[Part IV, Proposition 5.24]{HuaLog},
  \(M\subset \comp{\LFock{\mu}}{\LFock{\nu}}\)
  implies the existence of a surjective intertwining operator \(\mathcal{Y}\) of type
  \(\binom{M^\prime}{\LFock{\mu},\LFock{\nu}}\), we show that the image of any such
  intertwining operator must be an object in \(\lvmd{\ldat}\). By
  assumption \(M^\prime\) is finitely generated and hence we need only verify that
  \(\hvec\) acts semisimply and \(\hlie_{+}\) locally nilpotently. Assume
  \(m_\mu\in \LFock{\mu},m_\nu\in \LFock{\nu}\), the Jacobi identity for
  intertwining operators implies for any \(\alpha\in \hvec_\CC\) and \(n\ge1\)
  \begin{align*}
    \alpha_0\mathcal{Y}\brac*{m_\mu,x} m_\nu
    &= \mathcal{Y}\brac*{m_\mu,x} \alpha_0 m_\nu + \mathcal{Y}\brac*{\alpha_0 m_\mu, x}m\nu,\nonumber\\
    \brac{\alpha_{n} - x \alpha_{n-1}} \mathcal{Y}\brac*{m_\mu,x} m_\nu
    &= \mathcal{Y}\brac*{m_\mu,x} \brac{\alpha_{n}- x\alpha_{n-1}}m_\nu
      + \sum_{t= 0}^{n}\binom{t-n}{t} (-1)^t
      x^{n-t-1}\mathcal{Y}\brac*{\alpha_{t+1}m_\mu,x}m_\nu,
      \quad n\ge 1.
  \end{align*}
  The first equality shows that the semisimplicity of \(\alpha_0\) on
  \(m_\mu\) and \(m_\nu\) implies the  semisimplicity of \(\alpha_0\) on the
  image of \(\mathcal{Y}\). The second equality shows that the nilpotency of
  \(\hlie_{+}\) on \(m_\mu\) and \(m_\nu\) implies the local  nilpotency of
  \(\hlie_{+}\) on the image of \(\mathcal{Y}\). Thus all conditions of
  \cref{thm:simpsufconds} are satisfied, hence intertwining operators equip \(\lvmd{\ldat}\) with
  the braided monoidal structures of \cref{thm:tenstr}.

  Finally, the contragredient of a lattice Fock space is again a lattice Fock
  space (though generally of different weight). Hence \(\lvmd{\ldat}\) is
  closed under taking contragredients and thus admits a \rgv{} structure.
\end{proof}

Recall again that we are not assuming the lattice \(\lat\) to be non-zero and
so the above considerations capture the ordinary free boson without a lattice
by setting \(\lat=\{0\}\). Henceforth all references to $\lvmd{\ldat}$ are to be understood as
including the braided monoidal and \rgv{} structures provided in \cref{thm:gvboson}.

\begin{prop}
  Let \(\ldat\) be a set of \data{} and let \((\Omega,F)\) be the abelian
  3-cocycle constructed from \(\ldat\) by the formulae (\ref{eq:gvsbrassoc}).
  Since \(\lvmd{\ldat}\) is semisimple its structure
  isomorphisms are uniquely determined by their values on simple
  modules. Consider the lattice Fock spaces
  \(\LFock{\mu}, \LFock{\nu}, \LFock{\rho}\), \(\mu,\nu,\rho\in \qlat\).
  \begin{enumerate}
  \item For any two lattice Fock spaces \(\LFock{\mu}, \LFock{\nu}\) a choice
    of fusion product is given by
    \begin{equation}
      \LFock{\mu}\fuse \LFock{\nu}= \LFock{\mu+\nu},
    \end{equation}
    with corresponding universal intertwining operator
    \begin{equation}
      \left.\iop{\LFock{\mu},\LFock{\nu}}\right|_{\Fock{s(\mu)+\alpha_1}\otimes\Fock{s(\nu)+\alpha_2}}=(-1)^{\pair{s(\mu)}{\alpha_2}}\tcyc{\alpha_1}{\alpha_2}\tcyc{\alpha_1+\alpha_2}{k(\mu,\nu)}I_{s(\mu)+\alpha_1,s(\nu)+\alpha_2},\qquad \alpha_1,\alpha_2\in\lat.
      \label{eq:latintops}
    \end{equation}
    \label{itm:intformula}
  \item The braiding isomorphism \(c_{\mu,\nu}:\LFock{\mu}\fuse \LFock{\nu}\to
    \LFock{\nu}\fuse \LFock{\mu}\) is given by 
    \begin{equation}
      c_{\mu,\nu}=\ee^{\ii\pi\pair{s(\mu)}{s(\nu)}} \id_{\LFock{\mu+\nu}}=\Omega\brac*{\mu,\nu}\id_{\LFock{\mu+\nu}}.
      \label{eq:voabr}
    \end{equation}
    \label{itm:braidformula}
  \item The associativity isomorphism
    \(A_{\mu,\nu,\rho}: \LFock{\mu}\fuse
    \brac*{\LFock{\nu}\fuse\LFock{\rho}}\to\brac*{\LFock{\mu}\fuse
    \LFock{\nu}}\fuse\LFock{\rho}\) is given by
  \begin{equation}
    A_{\mu,\nu,\rho}=
    (-1)^{\pair{s(\mu)}{k(\nu,\rho)}}\frac{\tcyc{k(\mu,\nu)}{k(\mu+\nu,\rho)}}{\tcyc{k(\nu,\rho)}{k(\mu,\nu+\rho)}}\id_{\LFock{\mu+\nu+\rho}}=F\brac*{\mu,\nu,\rho}\id_{\LFock{\mu+\nu+\rho}}.
    \label{eq:voaassoc}
  \end{equation}
 \item The contragredient of a lattice Fock space is
  \begin{equation}
    \LFock{\rho}^\prime=\LFock{2\ffv-\rho},\qquad \rho\in \qlat,
  \end{equation}
  and hence the dualising object is \(\LFock{2\ffv+\lat}\).
\item The twist isomorphism is given by
  \begin{equation}
    \theta_{\LFock{\rho}}=\ee^{\pi\ii\pair{s(\rho)}{s(\rho)-2\rffv}}\id_{\LFock{\rho}},
    \qquad\rho\in\qlat.
  \end{equation}
  Note that \(\lat\) being even guarantees that the above twist formula is
  independent of the choice of section \(s\).
  \label{itm:twistformula}
\end{enumerate}
\label{thm:voabosstrmaps}
\end{prop}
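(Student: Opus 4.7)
The plan is to establish each of the five parts by first reducing to the Heisenberg vertex operators \(I_{\mu,\nu}\) and then tracking the cocycle factors that arise when extending the Heisenberg action on Fock spaces to the lattice VOA action on lattice Fock spaces via \eqref{eq:modaction}.

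\textbf{Step 1 (fusion and intertwiners).} Since \(\dim \ispc{\Fock{\mu}}{\Fock{\nu}}{\Fock{\mu+\nu}} = 1\) with canonical generator \(I_{\mu,\nu}\), any grading-compatible intertwining operator between lattice Fock space modules in \(\lvmd{\ldat}\) takes values in a single \(\LFock{\mu+\nu}\) and decomposes as \(\sum_{\alpha_1, \alpha_2 \in \lat} c_{\alpha_1, \alpha_2} I_{s(\mu)+\alpha_1, s(\nu)+\alpha_2}\). Applying the Jacobi identity \eqref{eq:intjac} with \(v \in \Fock{\beta} \subset \lvoa{\rffv}{\lat}\), \(\beta \in \lat\), and using the field map \eqref{eq:laction} produces recursions on the coefficients \(c_{\alpha_1,\alpha_2}\) in which the cocycle \(\tcycsymb\) and the section \(s\) both appear explicitly. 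Solving these recursions, with the overall normalisation fixed by the universal property \eqref{eq:fusuprop}, yields \eqref{eq:latintops}. The sign \((-1)^{\pair{s(\mu)}{\alpha_2}}\) arises when commuting the shift operator \(\ee^{s(\mu)}\) in \eqref{eq:utwvop} past a vector of Heisenberg weight \(\alpha_2\), while the factor \(\tcyc{\alpha_1+\alpha_2}{k(\mu,\nu)}\) corrects the cocycle for the discrepancy \(s(\mu)+s(\nu) = s(\mu+\nu) + k(\mu,\nu)\). This identifies \(\LFock{\mu} \fuse \LFock{\nu}\) with \(\LFock{\mu+\nu}\).

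\textbf{Steps 2 and 3 (braiding and associativity).} With \eqref{eq:latintops} in hand, \cref{thm:tenstr} reduces both formulas to explicit computations at the level of Heisenberg intertwiners. For the braiding, inserting the highest-weight vector \(\ket{s(\mu)}\) into
\(c_{\LFock{\mu}, \LFock{\nu}}(\iop{\LFock{\mu}, \LFock{\nu}}(\ket{s(\mu)}, z)\ket{s(\nu)}) = \ee^{zL_{-1}} \iop{\LFock{\nu}, \LFock{\mu}}(\ket{s(\nu)}, \ee^{\ii\pi}z)\ket{s(\mu)}\)
produces the phase \(\ee^{\ii\pi\pair{s(\mu)}{s(\nu)}}\) from the monodromy of the \(z^{\pair{s(\mu)}{s(\nu)}}\) prefactor in \eqref{eq:utwvop}, while the \(\tcycsymb\)-factors for the two orderings cancel by \(\tcyc{s(\mu)}{s(\nu)}\tcyc{s(\nu)}{s(\mu)}^{-1} = (-1)^{\pair{s(\mu)}{s(\nu)}}\). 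The result is \(\Omega(\mu,\nu)\id\). For associativity, both the product \(\iop{\LFock{\mu}, \LFock{\nu+\rho}}(\cdot, x_1)\iop{\LFock{\nu}, \LFock{\rho}}(\cdot, x_2)\) and the iterate \(\iop{\LFock{\mu+\nu}, \LFock{\rho}}(\iop{\LFock{\mu}, \LFock{\nu}}(\cdot, x_1-x_2)\cdot, x_2)\) acting on highest-weight vectors reduce to the same Heisenberg triple-product after analytic continuation; collecting the cocycle prefactors from \eqref{eq:latintops} and using the 2-cocycle conditions \eqref{eq:voacocycle} together with \(k(\mu,\nu)+k(\mu+\nu,\rho)=k(\nu,\rho)+k(\mu,\nu+\rho)\) yields \(F(\mu,\nu,\rho)\).

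\textbf{Steps 4 and 5 (contragredients and twist).} By \cref{thm:contragredients}, \(\LFock{\rho}'\) is the bigraded dual equipped with the opposed field map \eqref{eq:opfieldmap}. A direct computation of this opposed action on the Heisenberg generators, using that the conformal vector \(\omega_{\rffv}\) carries the background-charge term \(\rffv_{-2}\ket{0}\), shows that \(\hvec\)-weights are reflected about \(2\rffv\); hence \(\LFock{\rho}'\) has weights in \(2\ffv - \rho\) and, by the classification of simples in \cref{thm:gvboson}, \(\LFock{\rho}' \cong \LFock{2\ffv - \rho}\). Setting \(\rho = \lat\) gives the dualising object \(\LFock{2\ffv+\lat}\). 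For the twist, since \(L_0\) acts semisimply on each Fock summand and \(\lat\) is even, the conformal weights \eqref{eq:confwt} on distinct summands \(\Fock{s(\rho)+\alpha} \subset \LFock{\rho}\) differ by integers, so \(\theta = \ee^{2\pi\ii L_0}\) is the constant scalar \(\ee^{\ii\pi\pair{s(\rho)}{s(\rho)-2\rffv}}\) on all of \(\LFock{\rho}\).

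\textbf{Main obstacle.} The heart of the argument is Step 1: once the correct \(\tcycsymb\)-normalised universal intertwiner is identified, the remaining parts are essentially bookkeeping using \eqref{eq:voacocycle} and the properties of \(k\). The delicate point is to ensure that the signs from the Heisenberg free-field exponentials \(E^{\pm}\) combine correctly with the \(\tcycsymb\)-factors from \eqref{eq:laction} to give a bona fide grading-compatible intertwiner between lattice Fock modules rather than merely between their Heisenberg restrictions, and this drives the specific form of the subsequent braiding, associativity and twist formulas.
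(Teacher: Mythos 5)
Your overall route mirrors the paper's for Parts 2--5 (compare leading terms of explicit Heisenberg vertex operators, use the opposed field map for contragredients, read off the twist from the conformal weight formula), and for Part 1 you propose to re-derive the normalised intertwiner \eqref{eq:latintops} via the Jacobi-identity recursion rather than citing the computations in \cite{DoLGVA93} as the paper does. That is a legitimate alternative, but two of your justifying remarks are not quite right as stated.

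First, in Part 1 you attribute the sign \((-1)^{\pair{s(\mu)}{\alpha_2}}\) to ``commuting the shift operator \(\ee^{s(\mu)}\) past a vector of Heisenberg weight \(\alpha_2\).'' In this paper's conventions the shift operators \eqref{eq:shiftop} are plain linear maps with no built-in cocycle or sign; all of the cocycle structure is carried explicitly by the \(\tcycsymb\) factors in \eqref{eq:laction}, \eqref{eq:modaction} and \eqref{eq:latintops}. The sign and the \(\tcyc{\alpha_1+\alpha_2}{k(\mu,\nu)}\) factor really come out of the Jacobi recursion as compatibility conditions between the \(\tcyc{\beta}{\alpha_2}\) that appears in the module field map \eqref{eq:modaction} and the analogous factor that appears when one acts through the intertwiner on the left; your recursion idea is correct, but the heuristic you give for the sign belongs to a different (twisted-group-algebra) convention. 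Second, in Part 2 your cancellation ``\(\tcyc{s(\mu)}{s(\nu)}\tcyc{s(\nu)}{s(\mu)}^{-1}=(-1)^{\pair{s(\mu)}{s(\nu)}}\)'' is ill-posed: \(\tcycsymb\) is only defined on \(\lat\times\lat\), and \(s(\mu),s(\nu)\in\dlat\) need not lie in \(\lat\). When you evaluate on the highest-weight vectors of the \(\alpha_1=\alpha_2=0\) summand, the cocycle prefactors in \eqref{eq:latintops} reduce to \(\tcyc{0}{0}\,\tcyc{0}{k(\mu,\nu)}=1\) in both orderings, so there is nothing to cancel; the braiding scalar comes entirely from the monodromy of \(z^{\pair{s(\mu)}{s(\nu)}}\). (The paper instead computes on a general summand \(\Fock{s(\mu)+\alpha_1}\otimes\Fock{s(\nu)+\alpha_2}\) and there \(\tcyc{\alpha_1}{\alpha_2}\tcyc{\alpha_2}{\alpha_1}^{-1}=(-1)^{\pair{\alpha_1}{\alpha_2}}\) does enter; both computations are valid since by simplicity of \(\LFock{\mu+\nu}\) the braiding is a single scalar, but you should say you are invoking Schur's lemma to restrict to one summand.) With these two corrections your argument is sound and the remaining parts match the paper's proof.
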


\begin{proof}
  Parts \ref{itm:braidformula} -- \ref{itm:twistformula} follow by simple
  computations from the explicit formulae for intertwining operators in Part \ref{itm:intformula}.
  \begin{enumerate}
  \item The lattice intertwining operator formulae \eqref{eq:latintops} were
    proved in \cite{DoLGVA93} in the context of full rank even lattices,
    however, the arguments showing that these formulae satisfy the
    intertwining operator axioms, such as the Jacobi identity, do not depend
    on the lattice being full rank. See also, \cite{Tui12} for detailed
    descriptions on how to compute with Heisenberg intertwining operators.
  \item Since the lattice Fock spaces are simple modules, the braiding
    isomorphism is determined by comparing the leading terms of
    \(\ifld{\LFock{\mu},\LFock{\nu}}{\ket{s(\mu)+\alpha_1}}{z}\ket{s(\nu)+\alpha_2}\) and
    \(\ee^{zL_{-1}}\ifld{\LFock{\mu},\LFock{\nu}}{\ket{s(\nu)+\alpha_2}}{\ee^{\ii\pi}z}\ket{s(\mu)+\alpha_1}\),
    where \(\mu,\nu\in\qlat\) and \(\alpha_1,\alpha_2\in \lat\). These are
    \begin{align}
      \ee^{zL_{-1}}\ifld{ \LFock{\nu},\LFock{\mu}}{\ket{s(\nu)+\alpha_2}}{\ee^{\ii\pi}z}\ket{s(\mu)+\alpha_1}
      &= (\ee^{\ii\pi}z)^{\pair{s(\mu)+\alpha_1}{s(\nu)+\alpha_2}}\tcyc{\alpha_2}{\alpha_1}(-1)^{\pair{s(\nu)}{\alpha_1}}\brac*{\ket{s(\mu)+\alpha_1+s(\nu)+\alpha_2}+\mathcal{O}(z)},\nonumber\\
      \ifld{\LFock{\mu},\LFock{\nu}}{\ket{s(\mu)+\alpha_1}}{z}\ket{s(\nu)+\alpha_2}
      &= z^{\pair{s(\mu)+\alpha_1}{s(\nu)+\alpha_2}}\tcyc{\alpha_1}{\alpha_2}(-1)^{\pair{s(\mu)}{\alpha_2}}\brac*{\ket{s(\mu)+\alpha_1+s(\nu)+\alpha_2}+\mathcal{O}(z)}.
    \end{align}
    Comparing the leading terms we obtain
    \begin{equation}
      \frac{\tcyc{\alpha_2}{\alpha_1}}{\tcyc{\alpha_1}{\alpha_2}}\ee^{\ii \pi\pair{s(\mu)+\alpha_1}{s(\nu)+\alpha_2}}(-1)^{\pair{s(\nu)}{\alpha_1}}(-1)^{-\pair{s(\mu)}{\alpha_2}}=\ee^{\ii\pi\pair{s(\mu)}{s(\nu)}},
    \end{equation}
    and hence \(c_{\mu,\nu}=\ee^{\ii\pi\pair{s(\mu)}{s(\nu)}} \id_{\LFock{\mu+\nu}}\).
  \item As with the braiding isomorphisms, since the lattice Fock spaces are
    simple modules, thus the associativity
    isomorphisms are determined by comparing the leading
    terms. Let \(\mu,\nu,\rho\in\qlat\) and \(\alpha_1,\alpha_2,\alpha_3\in
    \lat\) and consider
    \begin{align}
      &\ifld{\LFock{\mu},\LFock{\nu+\rho}}{\ket{s(\mu)+\alpha_1}}{x_1}\ifld{\LFock{\nu},\LFock{\rho}}{\ket{s(\nu)+\alpha_2}}{x_2}\ket{s(\rho)+\alpha_3}\nonumber\\
      &\quad=
        (-1)^{\pair{s(\mu)}{k(\nu,\rho)+\alpha_2+\alpha_3}}\tcyc{\alpha_1}{
        \alpha_2+\alpha_3+k(\nu,\rho)}\tcyc{\alpha_1+\alpha_2+\alpha_3+k(\nu,\rho)}{k(\mu,\nu+\rho)}\nonumber\\
      &\qquad
        (-1)^{\pair{s(\nu)}{\alpha_3}}\tcyc{\alpha_2}{\alpha_3}\tcyc{\alpha_2+\alpha_3}{k(\nu,\rho)}\nonumber\\
      &\qquad(x_1-x_2)^{\pair{s(\mu)+\alpha_1}{s(\nu)+\alpha_2}}x_1^{\pair{s(\mu)+\alpha_1}{s(\rho)+\alpha_3}}x_2^{\pair{s(\nu)+\alpha_2}{s(\rho)+\alpha_3}}
        \brac[\big]{\ket{s(\mu)+s(\nu)+s(\rho)+\alpha_1+\alpha_2+\alpha_3}+\mathcal{O}(z)}\nonumber\\
      &\ifld{\LFock{\mu+\nu},\LFock{\rho}}{\ifld{\LFock{\mu},\LFock{\nu}}{\ket{s(\mu)+\alpha_1}}{x_1-x_2}\ket{s(\nu)+\alpha_2}}{x_2}\ket{s(\rho)+\alpha_3} \nonumber\\
      &\quad=
        (-1)^{\pair{s(\mu)}{\alpha_2}}\tcyc{\alpha_1}{\alpha_2}\tcyc{\alpha_1+\alpha_2}{k(\mu,\nu)}\nonumber\\
      &\qquad  (-1)^{\pair{s(\mu+\nu)}{\alpha_3}}\tcyc{\alpha_1+\alpha_2+k(\mu,\nu)}{\alpha_3}\tcyc{\alpha_1+\alpha_2+\alpha_3+k(\mu,\nu)}{k(\mu+\nu,\rho)}\nonumber\\
      &\qquad
        (x_1-x_2)^{\pair{s(\mu)}{s(\nu)}}x_2^{\pair{s(\mu)+s(\nu)}{s(\rho)}}\brac*{\frac{x_1}{x_2}}^{\pair{s(\mu)}{s(\rho)}}
        \brac[\big]{\ket{s(\mu)+s(\nu)+s(\rho)+\alpha_1+\alpha_2+\alpha_3}+\mathcal{O}(z)}.
    \end{align}
    The limit of the ratio of the \(x_1\) dependent factors is
  \begin{equation}
    \lim_{x_2\to 1}\lim_{x_1-x_2\to 1}
    \frac{(x_1-x_2)^{\pair{s(\mu)}{s(\nu)}}x_2^{\pair{s(\mu)+s(\nu)}{s(\rho)}}\brac*{\frac{x_1}{x_2}}^{\pair{s(\mu)}{s(\rho)}}}{(x_1-x_2)^{\pair{s(\mu)}{s(\nu)}}x_1^{\pair{s(\mu)}{s(\rho)}}x_2^{\pair{s(\nu)}{s(\rho)}}}
    =1
  \end{equation}
  and the associativity isomorphism is scalar multiplication by
  \begin{align}
    &\frac{(-1)^{\pair{s(\mu)}{\alpha_2}}\tcyc{\alpha_1}{\alpha_2}\tcyc{\alpha_1+\alpha_2}{k(\mu,\nu)}(-1)^{\pair{s(\mu+\nu)}{\alpha_3}}\tcyc{\alpha_1+\alpha_2+k(\mu,\nu)}{\alpha_3}\tcyc{\alpha_1+\alpha_2+\alpha_3+k(\mu,\nu)}{k(\mu+\nu,\rho)}}
    {(-1)^{\pair{s(\mu)}{k(\nu,\rho)+\alpha_2+\alpha_3}}\tcyc{\alpha_1}{
      \alpha_2+\alpha_3+k(\nu,\rho)}\tcyc{\alpha_1+\alpha_2+\alpha_3+k(\nu,\rho)}{k(\mu,\nu+\rho)}(-1)^{\pair{s(\nu)}{\alpha_3}}\tcyc{\alpha_2}{\alpha_3}\tcyc{\alpha_2+\alpha_3}{k(\nu,\rho)}}\nonumber\\
    &= (-1)^{\pair{s(\mu)}{k(\nu,\rho)}}\frac{\tcyc{k(\mu,\nu)}{k(\mu+\nu,\rho)}}{\tcyc{k(\nu,\rho)}{k(\mu,\nu+\rho)}}
  \end{align}
 \item The Heisenberg weight of \(\LFock{\rho}^\prime\) is determined by computing the
  opposed field map of \(\alpha_{-1}\ket{0}\), \(\alpha\in \hvec_\CC\). This
  is given by
  \begin{align}
    Y(\alpha_{-1}\ket{0},z)^{\opp}&=Y(e^{zL_1}(-z^{-2})^{L_0}\alpha_{-1}\ket{0},z^{-1})=
    -z^{-2}Y(\alpha_{-1}\ket{0},z^{-1})+z^{-1}2\pair{\rffv}{\alpha}Y(\ket{0},z^{-1})\nonumber\\
    &= -z^{-2}Y(\alpha_{-1}\ket{0},z^{-1})+z^{-1}2\pair{\rffv}{\alpha}\id,
  \end{align}
  for any \(\alpha\in\hvec_\CC\). This implies that the Heisenberg weight of
  \(\LFock{\rho}^\prime\) is \(2\rffv-\rho\).
\item The formula for the twist isomorphism follows immediately from the
  conformal weight of Fock space highest weight vectors.
  \end{enumerate}
\end{proof}

We prepare some notation in order to use \cref{thm:btcvoa} to show that
  \(\Vect\brac*{\ldat}\) and \(\lvmd{\ldat}\) are equivalent as \rgv{} categories.
To any object in \(\Vect\brac*{\ldat}\) we can associate an
object in \(\lvmd{\ldat}\) by the following induction construction. Let
\(M=\bigoplus_{\alpha\in \qlat}M_\alpha\) be a decomposition into homogeneous
spaces and consider the
vector space \(M\otimes\CC[\lat]\) and endow it with the structure of an
\(\hlie_{\ge}=\hlie_0\oplus\hlie_+\) module by defining
\begin{equation}
  \hlie_+\cdot M\otimes\CC[\lat]=0,\qquad
  \alpha\cdot m_\rho\otimes \beta=\pair{\alpha}{s(\rho)+\beta}\id,\quad
  \alpha\in \hvec_\CC,\ \rho\in \qlat,\ \beta\in \lat.
\end{equation}
Further induce \(M\otimes\CC[\lat]\) to a module over \(\hlie\) by defining
\begin{equation}
  \iFock{M}=\Ind{\hlie}{\hlie_\ge}{M\otimes\CC[\lat]}.
  \label{eq:indFock}
\end{equation}
Next, define the action of the shift operators \(\ee^\gamma,\
\gamma\in\lat\) on \(M\otimes\CC[\lat]\) to be
\begin{equation}
  \ee^\gamma m\otimes\ee^\delta=m\otimes\ee^{\gamma+\delta},\quad m\in M,\
  \gamma,\delta\in \lat
\end{equation}
and extend to all of \(\iFock{M}\) to obtain a well defined action of the
obvious analogue of
untwisted vertex operators \eqref{eq:utwvop} (with the first of the two indices
parametrising weights in \(\lat\)) and hence also the field map
\eqref{eq:modaction} by defining
\begin{align}
  Y_{\iFock{M}}(u\ket{\alpha_1},z)v\cdot m\otimes
  \ee^{\alpha_2}=\tcyc{\alpha_1}{\alpha_2} (-1)^{\pair{s(\lat)}{\alpha_2}}z^{\pair{\alpha_1}{s(\mu)+\alpha_2}}\ee^{\alpha_1}U(\alpha_1,u,z)\,
  v\cdot
  m\otimes \ee^{\alpha_2},
\end{align}
for \(\alpha_1,\alpha_2\in \lat\), \(\mu\in\qlat\), \(m\in M_\mu\), \(u,v\in \UEA{\hlie_{-}}\) and 
where \(U(\alpha_1,u,z)\) is the Heisenberg algebra valued series \eqref{eq:heisseries}.
Thus \(\iFock{M}\) has the structure of a
\(\lvoa{\rffv}{\lat}\) module, with decomposition into lattice Fock spaces
given by
\begin{equation}
  \iFock{M} \cong \bigoplus_{\rho\in \qlat} \dim\brac*{M_\rho}\LFock{\rho}.
  \label{eq:indFockdecomp}
\end{equation}
To define intertwining operators for the modules constructed above, we shall
need the following auxiliary linear maps which for any \(M,N,P\in
{\Vect}\brac*{\ldat}\) and \(f\in \Homgrp{}{M\otimes N}{P}\) are defined to be
\begin{align}
  f_m: \iFock{N}&\to \iFock{P}, \qquad\qquad  m\in M,\ n\in N,\
                         u\in\UEA{\hlie_{-}},\ \gamma\in\lat.\nonumber\\
  u\cdot n\otimes \ee^{\gamma}&\mapsto u\cdot f(m\otimes n)\otimes \ee^\gamma
  \label{eq:tensormaps}
\end{align}

\begin{thm}
  Let $\ldat$ be a set of \data{}, $\Vect\brac*{\ldat}$ be the associated
  \rgv{} category from the previous section and  $\lvmd{\ldat}$ the module category of 
  the lattice \voa{} described above. Further,
  let \(G: {\Vect}\brac*{\ldat} \to\lvmd{\ldat}\) be the functor
  which assigns to any \(M\in {\Vect}\brac*{\ldat}\) the object
  \(G(M)=\iFock{M}\) from \eqref{eq:indFock}
  with the obvious extension to morphisms. Consider the
  following maps.
  \begin{itemize}
  \item Let $\tstr_0 : \lvoa{\rffv}{\lat} \mapsto G(\CC_{0})$ be the
    module map uniquely characterised by
    \(\tstr_0(\ket{0})=1_0\otimes \ee^0\), where \(1_0\in\CC_0\).
  \item For \(M,N,P\in {\Vect}\brac*{\ldat}\), \(f\in \Homgrp{}{M\otimes N}{P}\), 
    \(\mu,\nu\in\qlat\), \(m\in M_\mu\), \(n\in N_\nu\), \(\alpha_1,\alpha_2\in\lat\) and
    \(u,v\in \UEA{\hlie_{-}}\) define \(\btf\) by
    \begin{multline}
      \btfi{f}{u\cdot m\otimes \ee^{\alpha_1},z}v\cdot
      n\otimes \ee^{\alpha_2} \\
      =
      (-1)^{\pair{s(\mu)}{\alpha_2}}\tcyc{\alpha_1}{\alpha_2}\tcyc{\alpha_1+\alpha_2}{k(\mu,\nu)}
      z^{\pair{s(\mu)+\alpha_1}{s(\nu)+\alpha_2}}f_m\ee^{\alpha_1}
      U(u, s(\mu) + \alpha_1,z)
      v\cdot
      n\otimes \ee^{\alpha_2}.
      \label{eq:bosonTstr}
    \end{multline}
  \end{itemize}
  Then \(\tstr_0\) and \(\btf\) 
  satisfy the conditions of \cref{thm:btcvoa} and hence
  endow \(G\) with the structure of a braided monoidal functor. The functor \(G\) with
  this choice of monoidal structure is an equivalence of \rgv{} categories.
   In particular, for $\ffv = 0$, the functor \(G\) is a ribbon equivalence.
   \label{thm:voaequiv}
\end{thm}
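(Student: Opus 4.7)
The plan is to verify the hypotheses of \cref{thm:btcvoa} for the data $(\tstr_0, \btf)$, apply \cref{thm:tstrcor} to upgrade the resulting lax monoidal structure to a strong braided monoidal equivalence, and finally match the dualising objects and twists to conclude that $G$ is a \rgv{} equivalence. Since both $\Vect(\ldat)$ and $\lvmd{\ldat}$ are semisimple with simple objects indexed by $\qlat$ (by \cref{thm:vecasbr,thm:vecgv} and \cref{thm:gvboson}), and the functor $G$ sends $\CC_\mu \mapsto \LFock{\mu}$ bijectively on isomorphism classes of simples, essential surjectivity is immediate. Faithfulness follows because morphism spaces between distinct simples vanish on both sides and are one-dimensional between isomorphic simples.

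The first technical step is to verify that, for each $\Mmod,\Nmod\in\Vect(\ldat)$, the formula \eqref{eq:bosonTstr} defines a logarithmic grading-compatible intertwining operator of type $\binom{G(\Mmod\otimes\Nmod)}{G(\Mmod),G(\Nmod)}$. By the decomposition \eqref{eq:indFockdecomp} and linearity, this reduces to the case $\Mmod=\CC_\mu$, $\Nmod=\CC_\nu$, where \eqref{eq:bosonTstr} is, up to the prefactor $(-1)^{\pair{s(\mu)}{\alpha_2}}\tcyc{\alpha_1}{\alpha_2}\tcyc{\alpha_1+\alpha_2}{k(\mu,\nu)}$, precisely the universal intertwining operator $\iop{\LFock{\mu},\LFock{\nu}}$ from \cref{thm:voabosstrmaps}.\ref{itm:intformula}. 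Hence the axioms (lower truncation, $L_{-1}$-derivation, Jacobi identity, grading compatibility) are inherited from the known lattice intertwining operators.

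Next I would check the four conditions of \cref{thm:btcvoa}. Functoriality is built into the definition \eqref{eq:bosonTstr} through the pointwise tensoring construction \eqref{eq:tensormaps}. Unitality follows by setting $\mu=0$ and $\alpha_1=0$, at which point $\btfi{l_\Nmod}{\tstr_0(v),z}$ collapses to the standard action $Y_{G(\Nmod)}(v,z)$ of the lattice VOA. Skew symmetry and associativity reduce, on simple objects, to comparing leading coefficients: the intertwining operators appearing on both sides differ only by the ratios $\tcyc{\alpha_2}{\alpha_1}/\tcyc{\alpha_1}{\alpha_2}$ and the analogous ratios of $k$-cocycle factors, and these ratios are precisely the scalars $\Omega(\mu,\nu)=\ee^{\ii\pi\pair{s(\mu)}{s(\nu)}}$ and $F(\mu,\nu,\rho)$ of \eqref{eq:gvsbrassoc}, which by \cref{thm:voabosstrmaps}.\ref{itm:braidformula}--\eqref{eq:voaassoc} equal the scalars realising the braiding and associator in $\lvmd{\ldat}$. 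This is the main computational step and is where the design of the cocycle prefactors in \eqref{eq:bosonTstr} pays off. The hardest bookkeeping will be the associativity identity, where the three $k$-cocycle factors must combine correctly with the sign $(-1)^{\pair{s(\mu)}{k(\nu,\rho)}}$; I would verify it by explicit calculation on highest weight vectors $\ket{s(\mu)+\alpha_1}\otimes\ket{s(\nu)+\alpha_2}\otimes\ket{s(\rho)+\alpha_3}$, using that one-dimensionality of the relevant intertwining spaces makes comparison of leading terms sufficient.

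Once the braided lax monoidal structure is established, I apply \cref{thm:tstrcor}.\ref{itm:intertweq}: the intertwining operator $\btfi{\id_{\CC_\mu\otimes\CC_\nu}}{z}$ is surjective onto $G(\CC_{\mu+\nu})=\LFock{\mu+\nu}$ because repeated application of the modes of $\alpha_{-n}$ and the shift operator $\ee^{s(\mu)+\alpha_1}$ generates every Fock space summand; and $G(\CC_\mu)\fuse G(\CC_\nu)\cong\LFock{\mu+\nu}\cong G(\CC_\mu\otimes\CC_\nu)$ by \cref{thm:voabosstrmaps}.\ref{itm:intformula}. Hence $\tstr_2$ is a natural isomorphism, and $G$ is a braided monoidal equivalence. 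For the \rgv{} structure, the dualising object in $\Vect(\ldat)$ is $\CC_{2\ffv}$, and $G(\CC_{2\ffv})=\LFock{2\ffv}=\lvoa{\rffv}{\lat}'$ by \cref{thm:voabosstrmaps}. Comparing the twist formulas, $Q(\alpha)=\ee^{\ii\pi\pair{s(\alpha)}{s(\alpha)-2s(\ffv)}}$ on the $\Vect(\ldat)$ side matches $\theta_{\LFock{\rho}}=\ee^{\ii\pi\pair{s(\rho)}{s(\rho)-2\rffv}}$ on the VOA side modulo the integer ambiguity in choosing $\rffv\in\ffv$ (which cancels inside $\ee^{2\pi\ii L_0}$ because $\lat$ is even). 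Thus $G(\theta_\Dcat)=\ee^{2\pi\ii L_0}|_{G(-)}$ and $G$ is a \rgv{} equivalence. When $\ffv=0$, the dualising object is the tensor unit and the twist reduces to the standard ribbon twist, giving a ribbon equivalence.
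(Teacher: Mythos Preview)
Your proposal is correct and follows the same strategy as the paper: verify the four conditions of \cref{thm:btcvoa} for $\btf$ by direct computation (the paper carries this out for general objects rather than first reducing to simples, but the content is identical since the category is semisimple and the proof of \cref{thm:voabosstrmaps} already contains the relevant leading-term and cocycle-ratio comparisons you describe), then invoke \cref{thm:tstrcor}.\ref{itm:intertweq} via surjectivity of $\btfi{\id}{z}$, and finally match dualising objects and twists exactly as you outline. One small correction: unitality must be checked for all $v\in\lvoa{\rffv}{\lat}$, hence for arbitrary $\alpha_1\in\lat$ (not just $\alpha_1=0$), though the verification remains routine.
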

  The equivalence of \(\lvmd{\ldat}\) and ${\Vect}\brac*{\ldat}$
  as braided tensor categories is well known \cite{DoLGVA93} in the special
  case of positive definite even full rank
  lattices. 
  Here we use the opportunity to illustrate the application of \cref{thm:btcvoa} and
  to show the equivalence of the \rgv{} structures as well.
  \begin{proof}
    We prove the theorem by showing that \(\tstr_0\) and the family of linear maps \(\btf\) of
    (\ref{eq:bosonTstr}) satisfy the conditions of
    \cref{thm:btcvoa}.\ref{itm:voptransf} and
    \cref{thm:tstrcor}.\ref{itm:intertweq}.
    We first show the functoriality of \(\btf\).  For any $\Mmod,\Mmod^\prime,
    \Nmod,\Nmod^\prime, \Pmod,\Pmod^\prime\in {\Vect}\brac*{\ldat}$,
    \(\mu,\nu\in\qlat\), \(m\in \Mmod^\prime_\mu\), \(n\in \Nmod^\prime_\nu\), \(\alpha_1,\alpha_2\in\lat\) and
    \(u,v\in \UEA{\hlie_{-}}\)
    consider
    \begin{align}
      \btfi{k\circ f\circ\brac*{g\otimes h}}{u\cdot m\otimes
        \ee^{\alpha_1};z}v\cdot n\otimes \ee^{\alpha_2}&=
      z^{\pair{s(\mu)+\alpha_1}{s(\nu)+\alpha_2}}\brac*{k\circ f\circ\brac*{g\otimes h}}_m\ee^{\alpha_1}
      U(s(\mu)+\alpha_1,u,z))\, 
      v\cdot
      n\otimes \ee^{\alpha_2}\nonumber\\
      &=
      G(k)z^{\pair{s(\mu)+\alpha_1}{s(\nu)+\alpha_2}}f_{g(m)}\ee^{\alpha_1}
      U(s(\mu)+\alpha_1,u,z)  v\cdot
        h(n)\otimes \ee^{\alpha_2}\nonumber\\
      &=
        G(k)\circ \btfi{f}{G(g)u\cdot m\otimes
        \ee^{\alpha_1}, z}G(h)v\cdot n\otimes \ee^{\alpha_2},
    \end{align}
    where the second and third equalities follow from the definition of the \(f_m\)
    notation in \eqref{eq:tensormaps}. Thus \(\btf\) is functorial.

    Next we show the unitality of \(\btf\). For any \(\Nmod\in {\Vect}\brac*{\ldat}\), \(\nu\in\qlat\),  \(n\in \Nmod_\nu\), \(\alpha_1,\alpha_2\in\lat\) and
    \(u,v\in \UEA{\hlie_{-}}\)  consider
    \begin{align}
      \btfi{l_\Nmod}{\tstr_0(u\ket{\alpha_1}),z}v\cdot n\otimes
      \ee^{\alpha_2}&=
      \btfi{l_\Nmod}{u\cdot 1_0\otimes\ee^{\alpha_1},z}v\cdot n\otimes
                      \ee^{\alpha_2}\nonumber\\
      &=
         z^{\pair{s(\mu)+\alpha_1}{s(\nu)+\alpha_2}}\brac*{l_\Nmod}_{1_0}\ee^{\alpha_1}
      U(s(\mu)+\alpha_1,u,z)\, v\cdot
        n\otimes \ee^{\alpha_2}\nonumber\\
      &= 
        z^{\pair{s(\mu)+\alpha_1}{s(\nu)+\alpha_2}}\ee^{\alpha_1}
        U(s(\mu)+\alpha_1,u,z)\, v\cdot
        n\otimes \ee^{\alpha_2}\nonumber\\
      &=
        Y_{G(\Nmod)}\brac*{u\ket{\alpha_1},z}v\cdot
        n\otimes \ee^{\alpha_2},
    \end{align}
    where in the third identity we have used that \(l_\Nmod(1_0\otimes n)=n\).
    Thus \(\btf\) is unital.

    Next we show the skew symmetry of \(\btf\). For any \(\Mmod,\Nmod\in
    {\Vect}\brac*{\ldat}\), \(\mu,\nu\in\qlat\), \(m\in \Mmod_\mu\), \(n\in \Nmod_\nu\), \(\alpha_1,\alpha_2\in\lat\) and
    \(u,v\in \UEA{\hlie_{-}}\) consider
    \begin{align}
      \btfi{c_{\Nmod,\Mmod}}{v\cdot n\otimes \ee^{\alpha_2},z}u\cdot
      m\otimes\ee^{\alpha_1}
      &=
      z^{\pair{s(\nu)+\alpha_2}{s(\mu)+\alpha_1}}\brac*{c_{\Nmod,\Mmod}}_{n}\ee^{\alpha_2}
      U(s(\nu)+\alpha_2,u,z)\,  v\cdot
        n\otimes \ee^{\alpha_1}\nonumber\\
      &=\ee^{\ii\pi\pair{s(\nu)}{s(\mu)}}z^{\pair{s(\nu)+\alpha_2}{s(\mu)+\alpha_1}}\brac*{P_{\Nmod\otimes
        \Mmod}}_{n}\ee^{\alpha_2}
        U(s(\nu)+\alpha_2,u,z)\, v\cdot
        n\otimes \ee^{\alpha_1}\nonumber\\
      &=\ee^{\ii\pi\pair{s(\nu)}{s(\mu)}}\btfi{P_{\Nmod\otimes\Mmod}}{v\cdot n\otimes \ee^{\alpha_2},z}u\cdot
        m\otimes\ee^{\alpha_1}\nonumber\\
      &=\ee^{z L_{-1}}\btfi{\id_{\Mmod\otimes\Nmod}}{u\cdot
        m\otimes\ee^{\alpha_1},\ee^{\ii\pi}z}v\cdot n\otimes \ee^{\alpha_2},
    \end{align}
    where \(P_{\Nmod\otimes\Mmod}:\Nmod\otimes \Mmod\to \Mmod\otimes\Nmod\) is
    the standard tensor flip of (graded) vector spaces and where in the fourth
    identity we have used the well known behaviour of untwisted vertex
    operators \eqref{eq:utwvop} with respect to \(L_{-1}\). Thus \(\btf\) is skew symmetric.

    Next we show \(\btf\) is associative.
    For any $\Mmod, \ \Nmod, \ \Pmod \in {\Vect}\brac*{\ldat}$,
    $m \in \Mmod$, $n \in \Nmod$, $p \in \Pmod$,
    \(\mu,\nu,\rho\in\qlat\), \(\alpha_1,\alpha_2,\alpha_3\in\lat\),
    \(u,v,w\in \UEA{\hlie_{-}}\)
    and \(x_1,x_2\in\CC\) such
      that $|x_1|>|x_2|>0$ and
      $|x_2| > |x_1 - x_2|>0$, consider
      \begin{align}
        &\btfi{\alpha_{\Mmod, \Nmod, \Pmod}}{u\cdot m\otimes
        \ee^{\alpha_1},x_1} \btfi{\id_{\Nmod \otimes \Pmod}}{v\cdot n\otimes
          \ee^{\alpha_2},x_2} w\cdot p\otimes \ee^{\alpha_3}\nonumber\\
        &\qquad=
          x_1^{\pair{s(\mu)+\alpha_1}{s(\nu+\rho)-k(\nu,\rho)+\alpha_2+\alpha_3}}x_2^{\pair{s(\nu)+\alpha_2}{s(\rho)+\alpha_3}}
          \brac*{\alpha_{\Mmod,\Nmod,\Pmod}}_m\ee^{\alpha_1}
          U(s(\nu)+\alpha_1,u,x_1)
          \brac*{\id_{\Nmod\otimes\Pmod}}_n
          \ee^{\alpha_2}
          U(s(\nu)+\alpha_2,v,x_2)\,
          w\cdot p\otimes \ee^{\alpha_3}\nonumber\\
        &\qquad=
          (-1)^{\pair{s(\mu)}{k(\nu,\rho)}}\frac{\tcyc{k(\mu,\nu)}{k(\mu+\nu,\rho)}}{\tcyc{k(\nu,\rho)}{k(\mu,\nu+\rho)}}
          x_1^{\pair{s(\mu)+\alpha_1}{s(\nu+\rho)-k(\nu,\rho)+\alpha_2+\alpha_3}}x_2^{\pair{s(\nu)+\alpha_2}{s(\rho)+\alpha_3}}
          \brac*{\id_{\Mmod\otimes\brac*{\Nmod\otimes\Pmod}}}_m\ee^{\alpha_1}
          U(s(\mu)+\alpha_1,u,x_1)
          \nonumber\\
        &\qquad\qquad\cdot
          \brac*{\id_{\Nmod\otimes\Pmod}}_n
          \ee^{\alpha_2}
          U(s(\nu)+\alpha_2,v,x_2)\,w\cdot p\otimes \ee^{\alpha_3}
          \nonumber\\
        &\qquad=\btfi{\id_{(\Mmod \otimes \Nmod) \otimes
        \Pmod}}{\btfi{\id_{\Mmod\otimes \Nmod}}{u\cdot m\otimes
        \ee^{\alpha_1}, x_1-x_2} v\cdot n\otimes\ee^{\alpha_2}, x_2} w\cdot
        p\otimes \ee^{\alpha_3},
      \end{align}
      where in the third equality we have used the well known behaviour of untwisted vertex
      operators \eqref{eq:utwvop}, see for example \cite[Section 12]{DoLGVA93}
      or \cite{Tui12}.
      Thus \(\btf\) is associative.

     The intertwining operators \(\btfi{\id_{\Mmod\otimes \Nmod}}{z}\)
     are surjective by construction for any \(\Mmod,\Nmod\in {\Vect}\brac*{\ldat}\). 
      Hence, by \cref{thm:tstrcor}, the functor \(G\) with the monoidal structure constructed from
      \(\btf\) is a braided monoidal equivalence.

      The equivalence of the \rgv{} structures then follows from noting that
      the dualising objects are isomorphic, that is,
      \begin{align}
        G(\CC_{2\ffv})\cong \dim (\CC_{2\ffv})\;\LFock{2\ffv}=\LFock{2\ffv},
      \end{align}
      and that the twists are equivalent, that is, for any \(\mu\in\qlat\)
      \begin{align}
        G(\theta_{\CC_\mu})=\ee^{\pi\ii\pair{s(\mu)}{s(\mu)-2s(\ffv)}}\id_{G(\CC_\mu)}=
        \ee^{\pi\ii\pair{s(\mu)}{s(\mu)-2\rffv}}\id_{G(\CC_\mu)}=\ee^{2\pi\ii L_0}\vert_{G(\CC_\mu)}=\theta_{G(\CC_\mu)},
      \end{align}
      where in the second equality we have used that \(s(\ffv)\) and \(\rffv\)
      differ at most by an element in \(\lat\).
    \end{proof}

\begin{ex}
  Recall the half rank lattice example at the end of \cref{sec:data}. In the
  notation and conventions introduced there, we choose \(\rffv=(1,0)\) as a
  representative of \(\ffv\). Further let \(\alpha=(1,0)\in\RR^2\) and
  \(\beta=(0,1)\in \RR^2\), then \voa{} structure on \(\Fock{0}\) is strongly
  generated by the fields corresponding to \(\alpha,\beta\), whose defining
  \ope{s} are
  \begin{align}
    \alpha(z)\alpha(w)\sim 0\sim \beta(z)\beta(w),\qquad \alpha(z)\beta(w)\sim \frac{1}{(z-w)^2}.
  \end{align}
  The choice of element \(\rffv\) defines the conformal vector and central charge
  \begin{align}
    \omega_{\rffv}=\alpha_{-1}\beta_{-1}\ket{0}+\rffv_{-2}\ket{0},\qquad c_{\rffv}=2.
  \end{align}
  Further, \(\beta\) generates the lattice \(\lat\) and the Fock spaces with
  weights in \(\lat\) have generating highest weight vectors of conformal
  weights
  \begin{equation}
    h_{n\beta}=\frac{1}{2}\pair{(0,n)}{(-2,n)}=-n.
  \end{equation}
\end{ex}

\subsection{Categories of Hopf algebra modules}
\label{sec:hopfalgcat}

For any set of \data{} \(\ldat=(\hvec,\pair{-}{-},\lat,\ffv)\),
let $\UEA{\plat}$ denote the universal enveloping algebra (or
symmetric algebra) of the complexification \(\plat_\CC\) of the vector space
$\plat$ seen as an abelian Lie algebra
and $\CC[\qlat^\perp]$ the group algebra of the
abelian group $\dlat / \plat$. These associative algebras both admit well
known Hopf algebra structures by defining the elements of \(\plat_\CC\) to be
primitive and those of \(\dlat/\plat\) to be group like, that is
\begin{align}
  \Delta(\mu)&=\mu\otimes 1+1\otimes\mu,&\epsilon(\mu)&=0,& s(\mu)&=-\mu,
  &\mu\in \plat_\CC,\nonumber\\
  \Delta(K_\nu)&=K_\nu\otimes K_\nu,&\epsilon(K_\nu)&=1,& s(K_\nu)&=K_{-\nu} = K_{\nu}^{-1},
  &\nu\in \dlat/\plat,
\end{align}
where \(K_\nu\) is the basis element of \(\CC[\dlat/\plat]\) corresponding to
\(\nu\in \dlat/\plat\). We call
\begin{equation}
  H_\lat = \UEA{\lat^{\perp}} \otimes \CC[\qlat^\perp]
\end{equation}
the \emph{lattice Hopf algebra} of \(\lat\), where the Hopf algebra structures
are those inherited from the two tensor factors.

Every object \(\Mmod\) in \(\Vect\brac*{\ldat}\) can be given the structure of an
\(H_\lat\) module by defining the representation \(\rho_{\Mmod}:H_\lat
\to \End{\Mmod}\) on homogeneous spaces by
\begin{equation} 
  \rho_\Mmod (\mu) \vert_{\Mmod_{\alpha}} = \pair{\mu}{s(\alpha)} \id_{\Mmod_{\alpha}}, \quad 
  \rho_\Mmod (K_\nu) \vert_{\Mmod_{\alpha}} = \ee^{2 \pi i
    \pair{\nu}{s(\alpha)}} \id_{\Mmod_{\alpha}},\qquad
  \alpha\in \qlat,\ \mu\in \plat_\CC,\ \nu\in \dlat/\plat.
  \label{eq:haction}
\end{equation}
Note that the above formulae do not depend on the choice of section \(s\).
We can therefore interpret \(\Vect\brac*{\ldat}\) as a category of representations
of the group \(\dlat\). Further, for
\( \mu\in \plat_\CC\cap \lat\), \(\rho_\Mmod (\mu)
\vert_{\Mmod_{\alpha}} =0\) and for \(\nu\in \dlat/\plat\),
\(\nu\cap\lambda\neq 0\), \(\rho_\Mmod (K_\nu) \vert_{\Mmod_{\alpha}}
=\id_{\Mmod_{\alpha}}\). Hence the objects of \(\Vect\brac*{\ldat}\) also can be
interpreted as representations of the quotient group \(\qlat\).
Since \(H_\lat\) is a Hopf algebra, there is of course a natural representation
on tensor products of objects \(\Mmod,\Nmod\in\Vect\brac*{\ldat}\) given by
\(\rho_{\Mmod\otimes
  \Nmod}=\brac*{\rho_\Mmod\otimes\rho_{\Nmod}}\circ\Delta\). 
Now that we have recast \(\Vect\brac*{\ldat}\), as an abelian category, as a category of
modules over \(H_\lat\), it is interesting to see if we can capture the
braided monoidal, \gv{} and ribbon structures of \({\Vect}\brac*{\ldat}\)
in Hopf algebraic terms by specifying an \(R\)-matrix, coassociator and
ribbon element. To do so, we recall the decomposition \(\dlat=\plat\oplus \Gamma\) of \(\dlat\) in \cref{thm:hbasis}.\ref{itm:dlatdecomp}.
We define formal operators in terms of their action
on the objects of \(\Vect\brac*{\ldat}\) (though they could also be thought of as lying in suitable completions of tensor powers of
\(H_\lat\)). 
Let \(\set{\mu_i}_{i=1}^{\dim\plat}\) be an $\RR$-basis of $\plat$ and let
$\set{\nu_j}_{j=1}^{\rk \lat}$ be a $\ZZ$-basis of $\Gamma$. Since the real
span of \(\dlat\) is \(\hvec\), \(\set{\mu_i, \nu_j}\) is an \(\RR\) basis of \(\hvec\).
Hence there exists a dual basis \(\set{\mu^i,\nu^j}\). Let \(\log_s K_\nu\),
\(\nu\in \dlat/\plat\) be the formal operator, depending on the section \(s\), defined on the homogeneous
spaces of an object \(\Mmod\in \Vect\brac*{\ldat}\) to act as
\begin{equation}
  \log_s(K_\nu)|_{\Mmod_\alpha} =
  \pair{\nu}{s(\alpha)}\id_{\Mmod_\alpha},\qquad \alpha\in \qlat.
\end{equation}
Further, consider the \(\hvec\) valued operators
\begin{equation}
  X=\sum_{i=1}^{\dim\plat} \mu^i\otimes \mu_i,\qquad \log_s K=
  \sum_{j=1}^{\rk\lat}\nu^j \otimes \log_sK_{\nu_j},
\end{equation}
which define maps \(\Mmod\to \hvec_\CC\otimes \Mmod\) by the action
\begin{equation}
  X|_{\Mmod_\alpha}=\sum_{i=1}^{\dim\plat} \mu^i\pair{\mu_i}{s(\alpha)}\otimes\id_{\Mmod_\alpha},\qquad
  \log_s K|_{\Mmod_{\alpha}} = \sum_{j=1}^{\rk\lat}\nu^j \otimes\pair{\nu_j}{s(\alpha)}\otimes\id_{\Mmod_\alpha}.
\end{equation}
So for any function \(f:\brac*{\dlat}^{n}\to \CC^\times\), \(n\in\NN\)
  and \(\alpha_1,\dots,\alpha_n\in\dlat\), we define
  the linear operator
\begin{equation}
  f(X_1+\log_s K_1,\dots,X_n+\log_s
  K_n)|_{\Mmod_{\alpha_1}\otimes\cdots\otimes \Mmod_{\alpha_n}} = f(\alpha_1,\dots,\alpha_n) \id_{\Mmod_\alpha}.
\end{equation}
Then we define the following ribbon element \(r:\Mmod\to
\Mmod\), \(R\)-matrix \(R:\Mmod\otimes\Nmod\to
\Mmod\otimes \Nmod\)  and coassociator \(\Phi:\Mmod\otimes\brac*{\Nmod\otimes\Pmod}\to
\Mmod\otimes \brac*{\Nmod\otimes\Pmod}\),
whose names will be justified by \cref{thm:hpfgvstr}.
\begin{align}
  r&=\exp\sqbrac*{-\pi\ii\pair{X_1+\log_sK_1}{X_1+\log_sK_1-2s(\ffv)}},
  \qquad   R =  \exp\sqbrac*{\ii \pi \brac*{\pair{X_1+\log_s{K}_1}{X_2+\log_sK_2}}},
  \nonumber\\
  \Phi &=
  \exp\sqbrac*{\ii \pi \pair{X_1 + \log_sK_1}{\log_sK_2+\log_sK_3-\log_sK_{2\otimes3}}}\nonumber\\
  &\qquad\cdot \frac{\tcyc{k(X_1 + \log_sK_1,X_2 + \log_sK_2)}{k(X_1 + \log_sK_1+X_2 + \log_sK_2,X_3 + \log_sK_3)}}{\tcyc{k(X_2 + \log_sK_2,X_3 + \log_sK_3)}{k(X_1 + \log_sK_1,X_2 + \log_sK_2+X_3 + \log_sK_3)}},
  \label{eq:hopfdata}
\end{align}
where \(\log_s(K)_{2\otimes3}\) is to be evaluated after the \(\Nmod\otimes \Pmod\)
tensor product has been evaluated, that is, \(\log_s(K_\nu)|_{\Nmod_\alpha\otimes\Pmod_\gamma}=\pair{\nu}{s(\alpha+\gamma)}\id_{\Nmod_\alpha\otimes\Pmod_\gamma}\).

\begin{thm}
  Let \(\lhmd{\ldat}\) be the category of \(H_\lat\) modules constructed
  from \(\Vect\brac*{\ldat}\), that is, the objects are the pairs
  \((\Mmod,\rho_\Mmod)\), \(\Mmod\in \Vect\brac*{\ldat}\) and the morphism are
  \(H_\lat\) module homomorphisms (these are precisely the morphisms of
  \(\Vect\brac*{\ldat}\)). Define a tensor functor on \(\lhmd{\ldat}\) by
  \begin{align}
    (\Mmod, \rho_{\Mmod}) \otimes (\Nmod, \rho_{\Nmod}) &=
    \brac*{\Mmod \otimes \Nmod, (\rho_{\Mmod} \otimes \rho_{\Nmod}) \circ \Delta},\qquad 
    \Mmod,\Nmod\in \Vect\brac*{\ldat},
  \end{align}
  where the tensor product of morphisms is the standard tensor product of
  linear maps.
  \begin{enumerate}
  \item The ribbon element, \(R\)-matrix and coassociator given in
    \eqref{eq:hopfdata} equip \(\lhmd{\ldat}\) with the structure of a
    \rgv{} category with twist \(\theta\), braiding \(c\) and associator \(\alpha\) respectively given by
    \begin{equation}
      \theta_\Mmod =r^{-1},\qquad
      c_{\Mmod,\Nmod}=P\circ R,\qquad
      \alpha_{\Mmod,\Nmod,\Pmod}=\alpha^{\text{vec}}\circ \Phi,
    \end{equation}
    where \(P\) is the tensor flip of vector spaces and \(\alpha^{\text{vec}}\) is
    the standard associator of vector spaces. All future references to
    \(\lhmd{\ldat}\) will include the \rgv{} structure given here.
  \item Let $F : \Vect\brac*{\ldat} \ra \lhmd{\ldat} $ be the
    functor which equips the vector space $\Mmod\in \Vect\brac*{\ldat}$ with the \(H_\lat\) action defined by the representation \(\rho_\Mmod\), that is,
    \begin{equation}
      F: \Mmod \mapsto (\Mmod, \rho_\Mmod),
    \end{equation}
    and which is the identity on morphisms.
    Let the isomorphism $\varphi_0 : (\CC_0, \rho_{\CC_0}) \ra F(\CC_0)=\CC_0$ be
    the identity map \(\id_{\CC_0}\) on the tensor unit $\CC_0$. Let
    $\tstr_2: F(-) \otimes F(-) \ra F(- \otimes -)$ be the natural
    transformation given by
    \begin{align}
      \tstr_2((\Mmod, \rho_\Mmod), (\Nmod, \rho_\Nmod))=\id_{\Mmod\otimes\Nmod} .
    \end{align}
    Then $(F, \tstr_0, \tstr_2)$ is a \rgv{} equivalence.
  \end{enumerate}
  \label{thm:hpfgvstr}
\end{thm}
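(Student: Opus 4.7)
The plan is to avoid any direct verification of the pentagon, hexagons, and ribbon axioms by exploiting the fact that $\Vect\brac*{\ldat}$ is already known to be a \rgv{} category via \cref{thm:vecgv}. First I would establish that the functor in the second part is an equivalence of $\CC$-linear abelian categories: since the $H_\lat$-action on $F(\Mmod)=(\Mmod,\rho_\Mmod)$ is determined entirely by the $\qlat$-grading via \eqref{eq:haction}, a linear map intertwines the $H_\lat$-actions on a pair of such modules if and only if it preserves the grading, so $F$ is fully faithful; essential surjectivity is immediate from the definition of $\lhmd{\ldat}$.

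The central computation is then to evaluate the Hopf-algebraic data on homogeneous components. Dual basis expansion together with the explicit formulae in \eqref{eq:haction} shows that $X+\log_s K$ acts on $\Mmod_\alpha$ as the scalar $s(\alpha)\in\hvec_\CC$, for every $\alpha\in\qlat$. Combined with the identity $s(\beta)+s(\gamma)-s(\beta+\gamma)=-k(\beta,\gamma)$ and the evenness of $\lat$, this identifies
\begin{equation*}
R|_{\Mmod_\alpha\otimes\Nmod_\beta}=\Omega(\alpha,\beta)\id,\qquad
\Phi|_{\Mmod_\alpha\otimes(\Nmod_\beta\otimes\Pmod_\gamma)}=F(\alpha,\beta,\gamma)\id,\qquad
r|_{\Mmod_\alpha}=Q(\alpha)^{-1}\id,
\end{equation*}
exactly matching the abelian $3$-cocycle $(F,\Omega)$ and weak quadratic form $Q$ of \eqref{eq:gvsbrassoc} that characterise the structure of $\Vect\brac*{\ldat}$. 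I would also check that the tensor-product action $\rho_{\Mmod\otimes\Nmod}$ obtained from the $\qlat$-grading of $\Mmod\otimes\Nmod$ agrees with $(\rho_\Mmod\otimes\rho_\Nmod)\circ\Delta$; this uses that $k(\alpha,\beta)\in\lat$ pairs trivially with $\plat$ and integrally with $\dlat/\plat$, so that the exponential in \eqref{eq:haction} absorbs the discrepancy between $s(\alpha)+s(\beta)$ and $s(\alpha+\beta)$.

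Having identified every structure morphism on $\lhmd{\ldat}$ with the image under $F$ (with $\tstr_0=\id$ and $\tstr_2=\id$) of the corresponding structure morphism on $\Vect\brac*{\ldat}$, I would conclude by invoking the transport principle for \rgv{} structures along monoidal equivalences, as in \cref{prop:GVequiv} together with its evident braided and ribbon refinements. This simultaneously proves that $\lhmd{\ldat}$ is a \rgv{} category with the stated structure---pentagon, hexagons, balancing, and $D(\theta_X)=\theta_{DX}$ all holding automatically in $\lhmd{\ldat}$ because they hold in $\Vect\brac*{\ldat}$---and that $(F,\tstr_0,\tstr_2)$ is a \rgv{} equivalence, with dualising object $F(\CC_{2\ffv})=(\CC_{2\ffv},\rho_{\CC_{2\ffv}})$.

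The main obstacle is bookkeeping rather than a conceptual difficulty: one must systematically identify the scalars produced by the Hopf-algebraic formulae with the $3$-cocycle data and manage carefully the distinction between elements of $\qlat$, their chosen representatives in $\dlat$ via $s$, and the failure of $s$ to be a group homomorphism, which is precisely what the $2$-cocycle $k$ encodes. Once this bookkeeping is in place, no coherence diagram needs to be verified explicitly.
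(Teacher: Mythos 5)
Your proposal is correct and takes essentially the same route as the paper: compute that $X+\log_s K$ acts by the scalar $s(\alpha)$ on $\Mmod_\alpha$ so that $r$, $R$, $\Phi$ evaluate to $Q^{-1}$, $\Omega$, $F$ respectively, and then transport the \rgv{} structure of $\Vect\brac*{\ldat}$ along the strict tensor functor $(F,\id,\id)$ rather than verifying the Hopf and coherence axioms directly. The only substantive addition you make is to spell out the (genuinely needed, though glossed over in the paper) check that $(\rho_\Mmod\otimes\rho_\Nmod)\circ\Delta$ agrees with $\rho_{\Mmod\otimes\Nmod}$ because $k(\alpha,\beta)\in\lat$ pairs trivially with $\plat$ and integrally with $\dlat$.
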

\begin{proof}
  The proposed tensor product functor \(\otimes\) is well defined, because
  \(H_\lat\) is a Hopf algebra. We can therefore use the proposed tensor
  functor \(F\) to map the twist, braiding and
  associativity isomorphisms from \(\Vect\brac*{\ldat}\) to
  \(\lhmd{\ldat}\). If the images of these structure morphisms match the evaluations of
  the formal operators \eqref{eq:hopfdata}, it then automatically follows that
  these operators satisfy the defining properties of ribbon elements,
  \(R\)-matrices and coassociators and that \((F,\tstr_0,\tstr_2)\) is an
  equivalence of \rgv{} categories.
  Let \(\eta,\kappa,\tau\in\qlat\) and \(\Mmod,\Nmod,\Pmod\in \lhmd{\ldat}\),
  then
  \begin{align}
    r|_{\Mmod_\eta}&=\exp\sqbrac*{-\pi\ii\pair{\sum_{i=1}^{\dim\plat}\mu^i\pair{\mu_i}{s(\eta)}+\sum_{j=1}^{\rk\lat}\nu^j\pair{\nu_j}{s(\eta)}}{\sum_{k=1}^{\dim\lat}\mu^k\pair{\mu_k}{s(\eta)}+\sum_{l=1}^{\rk\lat}\nu^l\pair{\nu_l}{s(\eta)}-2s(\ffv)}}\nonumber\\
    &=\ee^{-\pi\ii\pair{s(\eta)}{s(\eta)-2s(\ffv)}}\id_{\Mmod_\eta}
  \end{align} 
  and similarly,
  \begin{equation}
  R\vert_{\Mmod_{\eta} \otimes \Nmod_\kappa} = \ee^{\ii \pi \pair{\sct{\eta}}{\sct{\kappa}}} \id_{\Mmod_\eta \otimes \Mmod_\kappa}, \qquad \Phi\vert_{\Mmod_\eta\otimes\Nmod_\kappa\otimes\Pmod_\tau} = (-1)^{\pair{s(\eta)}{k(\kappa,\tau)}}\frac{\tcyc{k(\eta,\kappa)}{k(\eta+\kappa,\tau)}}{\tcyc{k(\kappa,\tau)}{k(\eta,\kappa+\tau)}} \id_{\Mmod_\eta\otimes\Nmod_\kappa\otimes\Pmod_\tau}.
\end{equation}
Therefore the ribbon element, \(R\) matrix and coassociator evaluate exactly as the
twist, braiding isomorphisms and associativity isomorphism in
\(\Vect\brac*{\ldat}\) do
and the theorem follows. The equivalence of the \gv{} structures then follows
by noting that \(\CC_{2\ffv}\) is the dualising object for both categories.
\end{proof}

\begin{ex}
  Recall the example from the end of \cref{sec:data}.
  \begin{enumerate}
 \item  If the lattice \(\lat\) is full rank, then \(\plat\) is trivial and
  \(\qlat\) is a finite group. In this case the lattice Hopf algebra is just
  the group algebra \(\CC\sqbrac*{\dlat}\). 
 \item If \(\lat\) is the trivial
  lattice, then \(\plat=\dlat=\hvec\) and in this case the lattice Hopf algebra
  is the universal enveloping algebra \(\UEA{\plat}\) of the complexification
  of \(\plat\).
 \item  Finally, in the half rank example \(\dlat\cong \ZZ\times \RR\) and so
   the lattice Hopf algebra is a tensor product of the \(\ZZ\)-group algebra and
  the universal enveloping algebra of the abelian one-dimensional Lie
  algebra \(\mathfrak{gl}(1)\). Further, the modules defined by the action \eqref{eq:haction}
  descend to modules over the group \(U(1)\times \ZZ\).
  Explicitly we can give the lattice Hopf algebra as
  \begin{align}
		H_\lat = \CC[X, K, K^{-1}], \qquad &K^{\pm 1} K^{\mp 1} = 1, \nonumber\\
		\Delta(X) = X \otimes 1 + 1 \otimes X, \qquad &S(X) = -X, \\
		\Delta(K^{\pm 1}) = K^{\pm 1} \otimes K^{\pm 1}, \qquad &S(K^{\pm 1}) = K^{\mp 1} \nonumber.
	\end{align}
The action on the module \(\CC_{x_1,x_2+\ZZ}\) is then given by
\begin{equation}
		\rho_{\CC_{x_1, x_2+\ZZ}} \brac*{X} = x_1
		\id_{\CC_{x_1, x_2+\ZZ}}
		,  \qquad \rho_{\CC_{x_1, x_2+\ZZ}} \brac*{K} = \ee^{ 2 \pi \ii \tilde{x}_2} 
		\id_{\CC_{x_1, x_2+\ZZ}}.
\end{equation}
The ribbon element, $R$-matrix and coassociator for this choice of data are then 
\begin{align}
  r&=\exp\brac*{-2\ii\pi\brac*{X_1\log_s K_1-\log_sK_1}}&
  R &=\exp\sqbrac*{\ii\pi\brac*{\log{K_1}\otimes X_2 + X_1\otimes \log{K_2}}},\nonumber\\
  \Phi &= \exp\sqbrac*{\ii\pi \brac*{X_1 \otimes \log{K_2}\otimes\id
         +X_1\otimes\id\otimes \log{K_3} - X_1 \otimes \log{K_{2\otimes 3}}}},
\end{align}
where $\log{K}$ acts as $ \tilde{x}_2$ on $\CC_{x_1, x_2+\ZZ}$.
\end{enumerate}
\end{ex}

\subsection{Simple Current Extensions}

The process of extending a \voa{} by (tensor powers of) modules whose tensor product is
invertible (such extentions are called 
\emph{simple current extentions}) has a long history in the \cft{} and \voa{} literature for both
finite order extensions \cite{SchYank90} and more recently also infinite ones \cite{AugInf17,CreuDLim20}. At a categorical level, extensions (not
necessarily the simple current type) correspond to algebra objects in a
braided monoidal category \cite{KirAlg01,BTC15,Cre17}.  In particular, algebra
objects in categories of graded vectors spaces and their connections to
\voa{s} and \cft{} have been studied in \cite{FucTFT304}.
Let $\ldat_i = (\hvec, \pair{-}{-}, \lat_i, \ffv_i)$ for $i = 1,2$ be two sets of \data{}. Then by \cref{thm:voaequiv,thm:hpfgvstr} we have two triples of ribbon \gv{} equivalent categories 
\begin{equation}
  {\Vect}(\ldat_i) \ \ \cong \ \ \lvmd{\ldat_i} \ \ \cong \ \ \lhmd{\ldat_i}, \quad i = 1,2.
\end{equation}
We will show that if $\lat_1 \subset \lat_2$ and $\ffv_1 \subset \ffv_2$, we
can find an algebra object $\Amod$ in the direct sum completion $\Vect
(\ldat_1)_\oplus$ such that the module category for $\Amod$ is equivalent to
${\Vect}(\ldat_2)$. Transferring the algebra object  $\Amod$ to
\(\lvmd{\ldat_1}_\oplus\) then yields the simple current extension of
$\lvoa{\rffv}{\lat_1}$ to $\lvoa{\rffv}{\lat_2}$, if we choose the same
representative \(\rffv\) for both \(\ffv_1\) and \(\ffv_2\). 
Finally, we will pose the problem of constructing $H_{\lat_2}$ from $H_{\lat_1}$.

\begin{prop}
	Let $\ldat_1$, $\ldat_2$ be two sets of bosonic lattice data, satisfying $\lat_1 \subset \lat_2$ and $\ffv_1 \subset \ffv_2$.
	Let $\sigma : \lat_2/\lat_1 \otimes \lat_2/\lat_1 \ra \CC^\times$ satisfy
	\begin{align} \label{eq:2cocyc}
		&\sigma(\lambda, \lat_1) = \sigma(\lat_1, \lambda) = 1, \qquad \sigma(\lambda_1, \lambda_2) \sigma(\lambda_2, \lambda_1)^{-1} = \Omega(\lambda_1, \lambda_2), \nonumber \\ &\sigma(\lambda_2, \lambda_3) \sigma(\lambda_1 + \lambda_2, \lambda_3)^{-1} \sigma(\lambda_1, \lambda_2+ \lambda_3) \sigma(\lambda_1, \lambda_2)^{-1} = F(\lambda_1, \lambda_2, \lambda_3),
	\end{align}
	where $(F,\Omega)$ is the abelian 3-cocycle associated to $\ldat_1$.
	Taking $\lat_2 / \lat_1 \subset 
	\lat_1^* / \lat_1$ as a subgroup, we define the triple $(\Amod,
        \mu:\Amod \otimes \Amod \ra \Amod, \eta:\CC_{\lat_1}\to \Amod)$ by
	\begin{equation}
		\Amod = \bigoplus
		_{\lambda \in \lat_2 / \lat_1}
		\CC_{\lambda}, \qquad 
		\mu \vert_{\CC_{\lambda_1} \otimes \CC_{\lambda_2}} =
                \sigma(\lambda_1, \lambda_2) J_{\lambda_1,\lambda_2},  \qquad \eta = \id_{\CC_{\lat_1}},
	\end{equation}
        where \(J_{\lambda_1,\lambda_2}\) is the canonical identification
          \(\CC_{\lambda_1} \otimes \CC_{\lambda_2}\cong
          \CC_{\lambda_{1} + \lambda_{2}}\).
          Then
	\begin{enumerate}
	\item $(\Amod, \mu, \eta)$ defines an associative commutative algebra
          with trivial twist and a unique unit (that is, \(\dim \Homgrp{}{\CC_0}{\Amod}=1\), also
          called the haploid condition), in 
	$\Vect (\ldat_1)_\oplus$. 
	\item The category of local \(\Amod\)-modules \(\Amod
            \modules{}^{\mathrm{loc}} \brac*{{\Vect}(\ldat_1)_\oplus}\) (also called
            dyslectic modules) is a \rgv{} category and is equivalent to
          \({\Vect}(\ldat_2)\).
          Thus the images of $\Amod$ under the functors $G$ and \(F\) in
          \cref{thm:voaequiv,thm:hpfgvstr} define equivalent algebras
          $\valg=G(\Amod)$ and $\halg=F(\Amod)$ in \(\lvmd{\ldat_1}_\oplus\)
          and \(\lhmd{\ldat_1}_\oplus\), respectively.  
        Hence we have the sequence 
	\begin{equation}
          {\Vect}(\ldat_2)\cong \valg \modules{}^{\mathrm{loc}} \brac*{\lvmd{\ldat_1}_\oplus} \cong
          \lvmd{\ldat_2}\cong \halg \modules{}^{\mathrm{loc}} \brac*{\lhmd{\ldat_1}_\oplus} \cong \lhmd{\ldat_2} .
        \end{equation}
        of ribbon \gv{} equivalences.
      \item Let \(\rffv\in \ffv_1\subset\ffv_2\) be a choice of representative for both \(\ffv_1\) and
        \(\ffv_2\). The algebra object $\valg =G(\Amod)$ admits the structure of a \voa{} via
        the field map \(Y=\btf_\mu\), with vacuum and conformal vectors given by
      the images of the vacuum and conformal vectors in
      \(\lvoa{\rffv}{\lat_1}\) under the tensor structure map \(\tstr_0\).
      Further, this \voa{} is isomorphic to $\lvoa{\rffv}{\lat_2}$.
	\end{enumerate}
\end{prop}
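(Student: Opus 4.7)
The plan is to tackle the three parts in order, leaning heavily on the explicit braided monoidal and \rgv{} structure of $\Vect(\ldat_1)$ recorded in \cref{thm:vecgv,thm:voabosstrmaps} and on the equivalences of \cref{thm:voaequiv,thm:hpfgvstr} so that verifications may be done entirely in the simplest of the three equivalent categorical models.

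For Part (1), I would observe that the three displayed cocycle relations \eqref{eq:2cocyc} are precisely the conditions obtained by writing the haploid, associative, and commutative algebra axioms for $(\Amod,\mu,\eta)$ in $\Vect(\ldat_1)^{(F,\Omega)}_\oplus$: the associativity pentagon collapses, because on each summand $\CC_{\lambda_1}\otimes(\CC_{\lambda_2}\otimes \CC_{\lambda_3})$ it reduces to the scalar identity $F(\lambda_1,\lambda_2,\lambda_3)\sigma(\lambda_1,\lambda_2+\lambda_3)\sigma(\lambda_2,\lambda_3)=\sigma(\lambda_1+\lambda_2,\lambda_3)\sigma(\lambda_1,\lambda_2)$; commutativity $\mu\circ c_{\Amod,\Amod}=\mu$ restricted to $\CC_{\lambda_1}\otimes\CC_{\lambda_2}$ is $\Omega(\lambda_1,\lambda_2)\sigma(\lambda_2,\lambda_1)=\sigma(\lambda_1,\lambda_2)$; and the unit axioms become $\sigma(\lambda,\lat_1)=\sigma(\lat_1,\lambda)=1$. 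Triviality of the twist on $\Amod$ reduces by \cref{thm:vecgv}.\ref{itm:gvtwist} to showing $Q(\lambda)=\ee^{\ii\pi\pair{s(\lambda)}{s(\lambda)-2\rffv}}=1$ for every $\lambda\in\lat_2/\lat_1$; this follows because $s(\lambda)$ can be taken in $\lat_2$, which is even, while $\rffv\in\ffv_1\subset \ffv_2\subset \lat_2^\ast$ pairs integrally with $\lat_2$. Haploidity is immediate from $\CC_0$ appearing only once in the direct sum decomposition.

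For Part (2), the strategy is to construct the equivalence $\Amod\modules^{\mathrm{loc}}\bigl({\Vect}(\ldat_1)_\oplus\bigr)\simeq \Vect(\ldat_2)$ by exhibiting and classifying its simple objects. A simple $\Amod$-module in $\Vect(\ldat_1)_\oplus$ with a highest-grade $\mu\in \lat_1^\ast/\lat_1$ has underlying object $\bigoplus_{\lambda\in\lat_2/\lat_1}\CC_{\mu+\lambda}$ with $\Amod$-action twisted by some choice compatible with $\sigma$; locality (triviality of the double braiding with $\Amod$) forces $\Omega(\lambda,\mu)\Omega(\mu,\lambda)=\ee^{2\pi\ii\pair{s(\lambda)}{s(\mu)}}=1$ for all $\lambda\in\lat_2/\lat_1$, which is precisely the condition $s(\mu)\in\lat_2^\ast$, and the indexing modulo the action by $\Amod$ then yields the set $\lat_2^\ast/\lat_2$ of simples of $\Vect(\ldat_2)$. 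I would then check that the induced braided monoidal and \rgv{} structures on the category of local $\Amod$-modules recover the abelian $3$-cocycle \eqref{eq:gvsbrassoc} for $\ldat_2$ and the weak quadratic form associated to $\ffv_2$ (the latter using that the representative $\rffv$ is common to $\ffv_1$ and $\ffv_2$, so the formula \eqref{eq:weakqform} transports verbatim). Transporting $\Amod$ through the equivalences of \cref{thm:voaequiv,thm:hpfgvstr} then yields the claimed chain of \rgv{} equivalences involving $\valg=G(\Amod)$ and $\halg=F(\Amod)$.

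For Part (3), the underlying vector space of $\valg=G(\Amod)$ is $\bigoplus_{\lambda\in\lat_2/\lat_1}\LFock{\lambda+\lat_1}=\bigoplus_{\alpha\in\lat_2}\Fock{\alpha}$, the same as $\lvoa{\rffv}{\lat_2}$. Using \cref{thm:btcvoa}, the candidate field map $Y=\btf_\mu$ assigns to homogeneous elements of grades $\alpha_1,\alpha_2\in\lat_2$ (with classes $\lambda_1,\lambda_2\in\lat_2/\lat_1$) an intertwining operator proportional to $I_{\alpha_1,\alpha_2}$, and the scalar one reads off from \eqref{eq:latintops} combined with \eqref{eq:bosonTstr} and the definition of $\mu$ is exactly a normalised $2$-cocycle $\tcycsymb_2$ on $\lat_2$ with commutator function $(-1)^{\pair{\alpha}{\beta}}$; the three defining identities \eqref{eq:voacocycle} for $\tcycsymb_2$ translate under this dictionary into precisely the algebra axioms verified in Part (1). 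Since lattice \voa{} structures on $\bigoplus_{\alpha\in\lat_2}\Fock{\alpha}$ are unique up to isomorphism as all choices of $2$-cocycle are cohomologous, the VOA $\valg$ is isomorphic to $\lvoa{\rffv}{\lat_2}$, and the image of $\tstr_0$ supplies a vacuum; the conformal vector obtained from the image of $\omega_\rffv\in\lvoa{\rffv}{\lat_1}$ lies in the $\alpha=0$ summand and coincides with $\omega_\rffv$ in $\lvoa{\rffv}{\lat_2}$ because the same representative $\rffv$ is used throughout. The main obstacle I expect is the bookkeeping in Part (3) to verify cleanly that the scalar prefactor arising from $\sigma$ together with $\tcycsymb_1$ yields a valid $2$-cocycle $\tcycsymb_2$ on all of $\lat_2$, since one must simultaneously account for the section data $s$, the cocycle $k(\lambda,\lambda')$ appearing in \eqref{eq:latintops}, and the compatibility of $\sigma$ with the restriction of $\tcycsymb_1$ to $\lat_1\subset\lat_2$.
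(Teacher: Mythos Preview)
Your proposal is correct and follows essentially the same approach as the paper in all three parts. The one efficiency worth noting is in Part~(2): rather than checking that the induced abelian $3$-cocycle on $\Amod\modules^{\mathrm{loc}}$ matches \eqref{eq:gvsbrassoc} for $\ldat_2$ directly, the paper invokes \cref{thm:vecasbr} and compares only the traces $\Omega_i(\alpha,\alpha)$ of the two $3$-cocycles, which coincide by evenness of $\lat_2$; since the trace determines the braided monoidal equivalence class, this suffices and avoids some of the bookkeeping you anticipate. The \gv{} and ribbon comparisons are then done exactly as you describe, by tracking the label of the induced dualising object and comparing the twist scalars.
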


\begin{proof}
  Denote by \(s_i\) the respective sections of the bosonic lattice data
  \(\ldat_i\). The 2-cocycles \(k\) and \(\tcycsymb\) shall only be needed for
  \(\ldat_1\) and will hence not be given an index, to reduce notational clutter.
  \begin{enumerate}
  \item The conditions \eqref{eq:2cocyc} are equivalent to the
      constraints imposed on \(\mu\) and \(\eta\) by the definition of an
    associative unital commutative algebra
    \cite{EtiTen15}[Definitions 7.8.1 and 8.8.1]. Unitality is implied by the
    first relation, commutativity by the second and associativity by the
    third.

    The haploid or uniqueness of the unit property follows from
    \(\Amod\) containing \(\CC_{\lat_1}\) only once as a direct summand and
    \(\dim \Homgrp{}{\CC_{\lat_1}}{\CC_{\lat_1}}=1\).

    The algebra having trivial twist follows by direct computation. On
      each summand of \(\Amod\), the twist evaluates to
      \(\theta(\lambda)=\ee^{\ii\pi\pair{s_1(\lambda)}{s_1(\lambda)-2s_1(\ffv_1)}}\),
      \(\lambda_2\in \lat_2/\lat_1\). Since \(\lat_2\) is even,
      \(s_1(\lambda)\in\lat_2\) and \(s_1(\ffv_1)\in \ffv_2\subset
      \dlat_2\), we have \(\pair{s_1(\lambda)}{s_1(\lambda)},\ 
      2\pair{s_1(\lambda)}{s_1(\ffv_2)}\in 2\ZZ\) and hence the twist is trivial.
  \item Let $\Amod\modules \brac*{{\Vect}(\ldat_1)_\oplus}$ be the category of all
    $\Amod$-modules in ${\Vect}(\ldat_1)_\oplus$.
    Combining \cite[Theorem 1.6]{KirAlg01}, which asserts that induction and
    restriction are adjoint, exact and injective on morphisms, and that
    induction is a tensor functor with the semisimplicity of
    \({\Vect}(\ldat_1)_\oplus \), we can quickly deduce that
    $\Amod\modules \brac*{{\Vect}(\ldat_1)_\oplus}$ is also semisimple and that every
    simple object in $\Amod\modules \brac*{{\Vect}(\ldat_1)_\oplus}$ is the induction
    of a simple object in \({\Vect}(\ldat_1)\). We denote the simple modules
    induced from the \(\CC_\alpha\), \(\alpha\in \dlat_1/\lat_1\) by
    \begin{equation}
      \Nmod_{\alpha} = \Amod \otimes  \CC_{\alpha} \cong \bigoplus
      _{\lambda \in \lat_2 / \lat_1} 
      \CC_{\lambda + \alpha}. 
    \end{equation}
    Let $\Amod \modules{}^{\mathrm{loc}} \brac*{{\Vect}(\ldat_1)}$ be the
    full subcategory of local modules, that is, all objects
    which have trivial double braiding with the algebra
    $\Amod$. For one of the \(\Nmod_\alpha\) above this means that for all
    $\lambda \in \lat_2 / \lat_1$, we require that
    \begin{equation}
      \Omega(\lambda, \alpha) \Omega(\alpha, \lambda) = \ee^{2 \pi \ii
        \pair{s_1 (\lambda)}{s_1 (\alpha)}} = 1, \ \ \text{or equivalently}  \ \ \pair{s_1 (\lambda)}{s_1 (\alpha)} \in \ZZ.
    \end{equation}
    By assumption \(s_1(\alpha)\in \dlat_1\). If \(s_1(\alpha)\in
      \dlat_2\), then the above condition is satisfied for all \(\lambda \in
      \lat_2/\lat_1\). Conversely, if \(s_1(\alpha)\notin \dlat_2\) then there
      exits a \(\mu \in \lat_2\) such that \(\pair{\mu}{s_1(\alpha)}\notin
      \ZZ\). But then \(s_1(\alpha)\) would pair non-integrally with every
      representative of the \(\lat_1\) coset of \(\mu\) and hence the above
      condition cannot be satisfied.
    Therefore $\alpha \in \lat_2^* / \lat_1$ exhausts all labels for
      simple objects in $\Amod \modules{}^{\mathrm{loc}}
      \brac*{{\Vect}(\ldat_1)}$. Two induced simple modules
    $\Nmod_{\alpha}$, $\Nmod_{\beta}$ are isomorphic if and only if their labels
    differ by a coset in $\lat_2 /\lat_1$. Therefore the isomorphism classes
    of simple modules are labelled by the elements of the quotient group $ \brac*{ \lat_2^* / \lat_1} / \brac*{\lat_2 / \lat_1} \cong \lat_2^* /  \lat_2$. 
    This implies that $\Amod \modules{}^{\mathrm{loc}}
    \brac*{{\Vect}(\ldat_1)_\oplus}$ and ${\Vect}(\ldat_2)$ are equivalent as
    abelian categories.
    By \cite[Theorem 1.10]{KirAlg01} or \cite[Theorem 2.5]{ParAlg95}, $\Amod \modules{}^{\mathrm{loc}}
    \brac*{{\Vect}(\ldat_1)_\oplus}$ is braided monoidal with
    the braiding descending from \({\Vect}(\ldat_1)\). Further,
    from \cite[Theorem 1.6]{KirAlg01} one can deduce that
    \(\Nmod_\alpha\otimes_\Amod \Nmod_\beta\cong \Nmod_{\alpha+\beta}\). Thus
   $\Amod \modules{}^{\mathrm{loc}}
   \brac*{{\Vect}(\ldat_1)_\oplus}$ also has the same tensor product as
    ${\Vect}(\ldat_2)$, hence the braiding and associativity isomorphisms are
    characterised by abelian 3-cocycles for the group \(\dlat_2/\lat_2\). To
    conclude equivalence as braided monoidal categories it is therefore
    sufficient for the trace of the abelian 3-cocycles of $\Amod \modules{}^{\mathrm{loc}}
    \brac*{{\Vect}(\ldat_1)_\oplus}$ and ${\Vect}(\ldat_2)$ to be equal.
    Let \(\Omega_i\), \(i=1,2\) be the respective braidings associated to
    \(\ldat_i\), then for \(\alpha\in \dlat_2/\lat_1\) we need to compare
    \(\Omega_1(\alpha,\alpha)\) and
    \(\Omega_2(\alpha+\lat_2,\alpha+\lat_2)\). Recall that \(s_1(\alpha)\in
    \dlat_2\) and hence \(s_2(\alpha+\lat_2)-s_1(\alpha)=\kappa\in \lat_2\), so
    \begin{equation}
      \Omega_2(\alpha+\lat_2,\alpha+\lat_2)=\ee^{\ii\pi\pair{s_2(\alpha+\lat_2)}{s_2(\alpha+\lat_2)}}=\ee^{\ii\pi\pair{s_1(\alpha)+\kappa}{s_1(\alpha)+\kappa}}=\ee^{\ii\pi\pair{s_1(\alpha)}{s_1(\alpha)}}=\Omega_1(\alpha,\alpha),
    \end{equation}
    where the third equality follows from \(\lat_2\) being even. Thus $\Amod \modules{}^{\mathrm{loc}}
    \brac*{{\Vect}(\ldat_1)_\oplus}$ and ${\Vect}(\ldat_2)$ are equivalent as
    braided monoidal categories.
    \gv{} equivalence follows by noting that the induction of the dualising
    object \(\Nmod_{\ffv_1}\) has \(\ffv_1+\lat_2=\ffv_2\) as its label and is
    hence equivalent to the dualising object of ${\Vect}(\ldat_2)$.
    Finally, ribbon equivalence follows by comparing the twist scalars \(\theta_1,\theta_2\) in both
    categories. We denote \(s_2(\ffv_2)-s_1(\ffv_1)=\tau\in\lat_2\) and
    consider for any \(\alpha\in \dlat_2/\lat_1\)
    \begin{equation}
      \theta_2(\alpha+\lat_2)=\ee^{\ii\pi\pair{s_2(\alpha+\lat_2)}{s_2(\alpha+\lat_2)-2s_2(\ffv_2)}}
      =\ee^{\ii\pi\pair{s_1(\alpha)+\kappa}{s_1(\alpha)+\kappa-2s_1(\ffv_1)-2\tau}}
      =\ee^{\ii\pi\pair{s_1(\alpha)}{s_1(\alpha)-2s_1(\ffv_1)}}=\theta_1(\alpha),
    \end{equation}
    where we have again used the \(\lat_2\) is even.
    Thus $\Amod \modules{}^{\mathrm{loc}}
    \brac*{{\Vect}(\ldat_1)_\oplus}$ and ${\Vect}(\ldat_2)$ are \rgv{} equivalent.
  \item 
    As a module over the Heisenberg algebra $\valg$ decomposes as follows.
    \begin{equation}
      \valg = G(\Amod) = \bigoplus_{\lambda \in \lat_2 / \lat_1} G(\Mmod_{\lambda}) \cong \bigoplus_{\lambda \in \lat_2 / \lat_1} \LFock{\lambda} \cong \bigoplus_{\lambda \in \lat_2/\lat_1 } \bigoplus_{\alpha \in \lat_1 } \Fock{s_1(\lambda) + \alpha} = \bigoplus_{\lambda \in \lat_2 } \Fock{\lambda} ,
    \end{equation}
    which is isomorphic to the vector space on which $\lvoa{\rffv}{\lat_2}$ is defined.
    We need to verify that \(Y = \btf_{\mu}\) is indeed a field map.
    We show this by comparing \(\btf_{\mu}\) to the field map of
    $\lvoa{\rffv}{\lat_2}$. Consider \(\lambda_1,\lambda_2\in \lat_2/\lat_1\),
    \(\alpha_1,\alpha_2\in\lat_1\), then
    \(\btf_{\mu}|_{\brac{G(\CC_{\lambda_1})\otimes \ee^{\alpha_1}} \otimes \brac{G(\CC_{\lambda_2})\otimes\ee^{\alpha_2}}}\) is
    essentially an untwisted vertex operator of the form \eqref{eq:utwvop} up
    to a scaling factor of
    \begin{equation}
      (-1)^{\pair{s_1(\lambda_1)}{\alpha_2}}\tcyc{\alpha_1}{\alpha_2}\tcyc{\alpha_1+\alpha_2}{k(\lambda_1,\lambda_2)}
      \sigma(\lambda_1,\lambda_2).
    \end{equation}
    Therefore \(\btf_{\mu}\) defines a \voa{} structure if and only if
    \begin{align}
      \tau(\gamma,\delta)&=(-1)^{\pair{s_1(\gamma+\lat_1)}{\delta-s_1(\delta+\lat_1)}}\tcyc{\gamma-s_1(\gamma+\lat_1)}{\delta-s_1(\delta+\lat_1)}\nonumber\\
      &\qquad\tcyc{\gamma-s_1(\gamma+\lat_1)+\delta-s_1(\delta+\lat_1)}{k(\gamma+\lat_1,\delta+\lat_1)}
      \sigma(\gamma+\lat_1,\delta+\lat_1),\qquad \gamma,\delta\in \lat_2
  \end{align}
  satisfies the 2-cocycle
    conditions of \eqref{eq:voacocycle} for \(\lat_2\). Since all 2-cocycles
    for \(\lat_2\) are cohomologous, the \voa{} structure defined by
    \(\btf_{\mu}\) is isomorphic to that of $\lvoa{\rffv}{\lat_2}$.
  \end{enumerate}
\end{proof}

\renewcommand{\mapsfrom}{\mathrel{\reflectbox{\ensuremath{\mapsto}}}}
\begin{prob}[Simple Current Extension of Hopf Algebras] Consider a
  quasitriangular (quasi-)Hopf algebra $H$ over a field \(\Bbbk\) with
  $R$-matrix written as $R=\sum_i R^{(i)}_1\otimes
  R_2^{(i)},\ R_1^{(i)},R_2^{(i)}\in H$, and a group $\Gamma$ of $1$-dimensional characters
  $\phi:H\to \Bbbk$ such that $\sum_{i,j}\phi(R_2^{(i)}R_1^{(j)})\psi(R_1^{(i)}R_2^{(j)})=1$ for all
  $\phi,\psi\in \Gamma$. Each such character \(\phi\) defines a
  one-dimensional module \(\Bbbk_\phi\) on which \(h\in H\) acts as \(\phi(h)\id\).
  Then
  then the object $\halg=\bigoplus_{\phi\in\Gamma} \Bbbk_\phi$ can be
  endowed with the structure of a commutative algebra in $H\modules{}$ using
  the multiplication in $\Gamma$ and the $2$-cocycle
  $\sigma(\phi,\psi)=\sum_i\phi(R_1^{(i)})\psi(R_2^{(i)})$ (this is a 2-cocycle because it
  satisfies the pentagon identity
  $\sigma(\phi\ast\rho,\psi)=\sigma(\phi,\psi)\sigma(\rho,\psi),\
  \phi,\rho,\psi\in \Gamma$, where
  \(\ast\) is convolution). 
  Can one construct a quasitriangular (quasi-)Hopf algebra, whose module
  category is \rgv{} equivalent to 
  $\halg\modules{}^{\mathrm{loc}}(H\modules{})$, the category of local \(\halg\)-modules?
\end{prob}

\section{The impact of \gv{} structure on characters and modular transformations}
\label{sec:chars}

We conclude this paper with a final section giving observations on
the modular properties of lattice module characters. Ideally one would want to
extract from \rgv{} categories some analogue of the rich structures enjoyed by
modular tensor categories such as a generalisation of the mapping class group
action and the Verlinde
formula. In particular this would require some notion of categorical trace. The
tools and understanding required for this have, however, not yet been
developed and we hope to return to this in the future.
To support this future research, we record here character formulae and their
modular transformation properties and show that they admit a naive
generalisation of the Verlinde formula in the sense of the standard module
formalism \cite{RidSL208}.

Let \(\ldat=(\hvec, \pair{-}{-},\lat,\ffv)\) be a set of \data\ as
defined at the beginning
of \cref{sec:data} and \(\rffv\) a choice of representative of \(\ffv\).

\begin{prop}
  Let \(\zeta \in \hvec_{\CC}\), \(\tau \in \mathbb{H}_+\), \(q=\ee^{2 \pi i \tau}\) and
  \(\gamma\in \qlat\). Then the character of the lattice Fock space
  \(\LFock{\gamma}\) as a \(\lvoa{\rffv}{\Lambda}\) module is 
  \begin{equation}
    \chi_{\gamma}^{\rffv} ( \zeta, \tau)=
    \mathrm{Tr}_{\LFock{\gamma}} \ee^{2 \pi \ii \pair{\zeta}{-}} q^{L_0 - c/24}
    =  \sum_{\lambda \in \lat} \ee^{2 \pi i
      \pair{\zeta}{s(\gamma) + \lambda}} \frac{q^{\frac{1}{2} \pair{s(\gamma)
          + \lambda - \rffv}{s(\gamma) + \lambda -
          \rffv}}}{\eta(\tau)^{\dim\hvec}} .
    \label{eq:charform}
  \end{equation}
\end{prop}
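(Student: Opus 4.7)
The plan is a direct computation, using the explicit structure of lattice Fock spaces as modules over the Heisenberg subalgebra $\hvoa{\rffv}$ together with the action of the conformal grading shifted by the central charge.

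First, I would use the decomposition $\LFock{\gamma} = \bigoplus_{\lambda \in \lat} \Fock{s(\gamma) + \lambda}$ from the definition of the lattice Fock space, so that the trace splits as a sum over $\lambda \in \lat$ of traces on individual Fock spaces $\Fock{s(\gamma) + \lambda}$. The key point is that each such Fock space is freely generated as a $\UEA{\hlie_-}$-module by its highest-weight vector $\ket{s(\gamma) + \lambda}$, so a PBW basis is available and the trace factors cleanly.

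Second, I would fix a Fock space $\Fock{\nu}$ with $\nu = s(\gamma) + \lambda$ and compute the three ingredients separately on the monomial basis $\alpha^{(i_1)}_{-n_1}\cdots \alpha^{(i_k)}_{-n_k}\ket{\nu}$ (with $n_j \ge 1$). The zero mode $\zeta_0$ acts as the scalar $\pair{\zeta}{\nu}$ on every vector (since $[\zeta_0, \alpha_{-n}] = 0$ for $n > 0$), yielding the uniform factor $\ee^{2\pi\ii\pair{\zeta}{\nu}}$ on $\Fock{\nu}$. For $L_0$, I would use the conformal-weight formula $h_\nu = \tfrac{1}{2}\pair{\nu}{\nu - 2\rffv}$ for the highest-weight vector (already established in \eqref{eq:confwt}) together with the fact that $\alpha_{-n}$ raises the conformal weight by $n$. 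Combined with $c_\rffv = \dim\hvec - 12\pair{\rffv}{\rffv}$, the exponent of $q$ on $\ket{\nu}$ becomes
\begin{equation}
h_\nu - \frac{c_\rffv}{24} = \tfrac{1}{2}\pair{\nu}{\nu} - \pair{\nu}{\rffv} + \tfrac{1}{2}\pair{\rffv}{\rffv} - \frac{\dim\hvec}{24} = \tfrac{1}{2}\pair{\nu - \rffv}{\nu - \rffv} - \frac{\dim\hvec}{24},
\end{equation}
which is exactly the exponent appearing in the target formula.

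Third, summing the contributions of the creators over the PBW basis is the standard Heisenberg bosonic partition function computation: the level-$n$ creators for each of the $\dim\hvec$ basis directions contribute a factor $(1-q^n)^{-1}$, giving the total
\begin{equation}
\prod_{n=1}^{\infty}(1-q^n)^{-\dim\hvec}.
\end{equation}
Combining with the $q^{-\dim\hvec/24}$ left over from $L_0 - c/24$ yields $\eta(\tau)^{-\dim\hvec}$. Putting the three ingredients together on $\Fock{s(\gamma)+\lambda}$ and summing over $\lambda \in \lat$ produces \eqref{eq:charform}.

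There is no serious obstacle here; the only subtle point is bookkeeping the shift from the Feigin-Fuchs conformal vector $\omega_\rffv$, which simultaneously modifies both the conformal weight of the highest-weight vector and the central charge. The verification that these shifts combine into the clean quadratic form $\tfrac{1}{2}\pair{\nu - \rffv}{\nu - \rffv}$ is the one step that deserves explicit display; the rest is the standard Heisenberg character calculation, and absolute convergence on $\mathbb{H}_+$ is clear since all lattice sums are Gaussian in $\lambda$.
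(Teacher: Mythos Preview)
Your proof is correct and follows exactly the paper's approach: decompose $\LFock{\gamma}$ into ordinary Fock spaces and sum their individual characters, which the paper simply cites as well known while you spell out the PBW and Feigin--Fuchs bookkeeping. One minor caveat: your closing remark on absolute convergence is not quite right, since the paper allows $\pair{-}{-}$ to be indefinite and treats \eqref{eq:charform} as a formal sum (positive definiteness is only imposed later, in \cref{thm:moddat}, for the $S$-transformation), but this does not affect the argument.
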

  Note that since the sum on the \rhs{} of \eqref{eq:charform} ranges over the entire lattice \(\lat\) it does not depend on the choice of representative \(s(\gamma)\) of the coset $\gamma$ and it only depends on the choice of representative \(\tilde{\xi}\) of the coset \(\xi\) by a global factor coming from the first exponential.
  \begin{proof}
    The argument of the sum is the well known character formula for non-lattice Fock
    spaces \(\Fock{\lambda}\). Hence the character for the lattice Fock space
    is just the sum of the characters of the \(\Fock{\lambda}\) summed over
    all \(\lambda\in \lat\).
  \end{proof}
  Recall the decomposition \(\dlat = V \oplus \rspn\nlat \oplus F \oplus D\) 
  in \cref{thm:hbasis}.\ref{itm:dlatdecomprefined}.
\begin{thm}\label{thm:moddat}
  The $T$-transformation of lattice module characters is
  \begin{equation}
    T \set{\chi^{\rffv}_{\gamma} (\zeta, \tau)} =\chi^{\rffv}_{\gamma} (\zeta, \tau+1)
    = \ee^{\pi \ii \pair{\gamma - \ffv}{\gamma - \ffv}} \chi^{\rffv}_{\gamma} (\zeta, \tau) .
  \end{equation}
  If $\pair{-}{-}$ restricted to the groups $V$ and $D$ is positive definite, 
  then the $S$-transformation is
  \begin{equation}
    S \set{\chi^{\rffv}_{\gamma} (\zeta, \tau)}
    = \chi^{\rffv}_{\gamma} \brac*{\frac{\zeta}{\tau}, \frac{-1}{\tau}}
    =\brac*{\frac{\abs{\tau}}{\ii \tau}}^{\rk \nlat} 
    \frac{ \ee^{2\pi i
      \brac*{\frac{\norm{\zeta}^2}{2\tau} +
        \frac{\pair{\rffv}{\zeta}}{\tau} - \pair{\rffv}{\zeta}}}}{\sqrt{|D/ \brac*{D\cap\lat}|}}
    \int_{\qlat} \ee^{- 2\pi \ii \pair{\gamma - \ffv}{\mu - \ffv}} \chi^{\rffv}_{\mu} (\zeta, \tau) d (\mu),
  \end{equation}
  where \(\int_{\qlat}\dd(\mu)=\int_V\dd(v) \cdot \sum_{f\in
    F}\int_{\rspn{\nlat}/\nlat} \dd(\nu) \cdot \sum_{t\in D/D\cap\lat}\), where
  the integrals over \(V\) and \(\rspn\nlat/\nlat\) are to be expanded as follows.
  \begin{itemize}
  \item Pick any $\RR$-basis \(\set{e_i}_{i=1}^{\dim V}\) of $V$ and let $\det \pair{-}{-}\vert_V$ be the
    determinant of the Gram matrix for the pairing \(\pair{-}{-}\) restricted
    to \(V\) in this basis. For any \(v\in V\) denote its expansion in the
    basis \(\set{e_i}_{i=1}^{\dim V}\) by \(v=\sum_i v_i e_i\), then
    \begin{equation}
      \int_V\psi(v)\dd(v)=\sqrt{\det \pair{-}{-}\vert_V} \int_{\RR^{\dim V}} \psi(\sum_{i}v_ie_i)\dd v_1\cdots
      \dd v_{\dim V}.
    \end{equation}
  \item Pick any \(\ZZ\)-basis \(\set{g_i}_{i=1}^{\rk \nlat}\) of
    \(\nlat\) (which will also be an \(\RR\)-basis for \(\rspn{\nlat}\)) and
    any \(\ZZ\)-basis \(\set{h_i}_{i=1}^{\rk \nlat}\) of
    \(F\). For
    any \(\nu\in \rspn{\nlat}\) and \(f\in F\)
    denote their expansion in their respective bases by
    \(\nu=\sum_i \nu_i g_i\) and \(f=\sum_{i} f_i h_i\), then
    \begin{equation}
      \sum_{f\in F}
      \int_{\rspn{\nlat}/\nlat} \psi(f,\nu)\dd(\nu)=
      \sum_{\substack{f_k\in\ZZ\\1\le k\le \rk \nlat}}\int_{[0,1]^{\rk \nlat}}
      \psi(
      \sum_j
      f_j h_j,\sum_i \nu_i g_i)\dd \nu_1\cdots \dd \nu_{\rk\nlat}.
    \end{equation}
  \end{itemize}
\end{thm}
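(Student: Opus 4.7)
The $T$-transformation will follow from a direct calculation. Substituting $\tau\mapsto\tau+1$ into \eqref{eq:charform} produces the factor $\ee^{-\pi\ii\dim\hvec/12}$ from $\eta(\tau+1)^{\dim\hvec}$ and the phase $\ee^{\pi\ii\pair{s(\gamma)+\lambda-\rffv}{s(\gamma)+\lambda-\rffv}}$ on each summand from $q\mapsto q\ee^{2\pi\ii}$. Expanding the pairing and using that $\lat$ is even and that $\lambda\in\lat$ pairs integrally with $s(\gamma)-\rffv\in\dlat$, this phase reduces to the $\lambda$-independent expression $\ee^{\pi\ii\pair{s(\gamma)-\rffv}{s(\gamma)-\rffv}}$, whose value modulo $2\ZZ$ depends only on $\gamma-\ffv\in\qlat$. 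Combining with the eta contribution yields the stated $T$-phase.

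For the $S$-transformation I will first dispose of the eta-factor via $\eta(-1/\tau)^{\dim\hvec}=(-\ii\tau)^{\dim\hvec/2}\eta(\tau)^{\dim\hvec}$ and then attack the theta-like lattice sum by Poisson summation. The key tool is the orthogonal decomposition $\dlat=V\oplus\rspn\nlat\oplus F\oplus D$ from \cref{thm:hbasis}, which also induces $\lat=\nlat\oplus(D\cap\lat)$. Writing $\lambda=\lambda_n+\lambda_d$ and decomposing $s(\gamma)$ and $\rffv$ componentwise, the mutual orthogonality of the three blocks $V$, $\rspn\nlat\oplus F$ and $D$ splits the quadratic form into a $V$-constant, a bilinear $\nlat$--$F$ cross term (the only coupling on the null block, since both $\rspn\nlat$ and $F$ are individually null), and a positive definite quadratic form on $D\cap\lat$.

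The sum over $\lambda_d\in D\cap\lat$ is then a theta function on a positive definite lattice, whose $S$-transform by Gaussian Poisson summation yields the prefactor $1/\sqrt{|D/(D\cap\lat)|}$ together with a finite sum over $D/(D\cap\lat)$ with kernel $\ee^{-2\pi\ii\pair{\cdot}{\cdot}}$. The sum over $\lambda_n\in\nlat$, being a sum of pure exponentials in variables paired with $F$, is the Dirac comb supported on the dual of $\nlat$ inside $\rspn F$; pairing this against the continuous variables along $\rspn\nlat/\nlat$ and summing against the discrete $F$ directions then produces a sum over $F$ and an integral over $\rspn\nlat/\nlat$. The remaining continuous $V$-integration is resolved by the Gaussian Fourier transform. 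By the mutual orthogonality of the decomposition, the cross pairings from each block assemble into the single kernel $\ee^{-2\pi\ii\pair{\gamma-\ffv}{\mu-\ffv}}$ integrated against the measure $\dd(\mu)$ on $\qlat=V\oplus\rspn\nlat/\nlat\oplus F\oplus D/(D\cap\lat)$.

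The main obstacle will be careful bookkeeping of the $\tau$-dependent prefactors and Jacobians. In particular, the factor $(|\tau|/\ii\tau)^{\rk\nlat}$ arises from the interplay between the Gaussian $V$-integral (which is genuinely Gaussian in $\tau$) and the oscillatory $\nlat$-sum produced by Poisson (which contributes only $|\tau|$-type factors through distributional delta supports), combined with the eta-function transformation; verifying that all volume factors and Jacobian determinants for the chosen bases of $V$, $\nlat$, $F$ and $D$ cancel down to precisely $(|\tau|/\ii\tau)^{\rk\nlat}/\sqrt{|D/(D\cap\lat)|}$ requires a careful accounting across the four blocks. The linear term $\ee^{2\pi\ii(\norm{\zeta}^2/2\tau+\pair{\rffv}{\zeta}/\tau-\pair{\rffv}{\zeta})}$ will emerge from completing the square in $\zeta$ during the Poisson transform on each block.
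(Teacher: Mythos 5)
Your $S$-transformation strategy coincides with the paper's: the paper first establishes \cref{thm:chardecomp}, factorising the character into a $V$-factor, a combined $\rspn\nlat\oplus F$-factor, and a $D$-factor using the mutual orthogonality supplied by \cref{thm:hbasis}, and then computes the $S$-transformation of each factor separately. You describe exactly this decomposition and invoke the same three tools (Gaussian Fourier transform over $V$, lattice theta-transformation with Poisson summation for $D\cap\lat$, and the Dirac comb for the $\nlat$--$F$ block), and you correctly note that the final kernel $\ee^{-2\pi\ii\pair{\gamma-\ffv}{\mu-\ffv}}$ factorises across the blocks. So for $S$ you have the right skeleton; what remains is precisely the bookkeeping you flag.

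However, your $T$-argument has a genuine gap. You correctly isolate two phases: $\ee^{\pi\ii\pair{s(\gamma)-\rffv}{s(\gamma)-\rffv}}$ from the numerator, and $\ee^{-\pi\ii\dim\hvec/12}$ from $\eta(\tau+1)^{-\dim\hvec}$ in the denominator. These do \emph{not} combine to give the stated phase $\ee^{\pi\ii\pair{\gamma-\ffv}{\gamma-\ffv}}$ — the $\ee^{-\pi\ii\dim\hvec/12}$ is the usual central-charge contribution from $q^{-c/24}$, is nontrivial unless $\dim\hvec\equiv 0\pmod{24}$, and does not cancel against anything. You cannot simply assert "combining with the eta contribution yields the stated $T$-phase"; you need to either account for this residual constant phase explicitly or explain where it is absorbed. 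Separately, your attribution of the prefactor $(\abs{\tau}/\ii\tau)^{\rk\nlat}$ to an "interplay" between the Gaussian $V$-integral and the $\nlat$-sum is misleading: this factor arises entirely within the $\nlat$--$F$ block, where the $\eta$-power is $2\rk\nlat$ (contributing $(-\ii\tau)^{-\rk\nlat}$ under $S$) and the lattice delta distribution $\delta_F$ scales by $\abs{\tau}^{\rk\nlat}$ under $\tau\mapsto-1/\tau$. The $\tau$-powers from the $V$ and $D$ blocks cancel internally between their $\eta$-transformations and their Gaussian/Poisson factors.
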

Note that the $S$-transformation does not depend of the choice of the groups $D$  
and $V$ in the decomposition of \(\dlat\), as any change of choice will be compensated for by the determinants.
Note further that, due to \(\lat\) being integral and even, the expressions
\(\ee^{\ii\pi\pair{\gamma-\ffv}{\gamma-\ffv}}\) and
\(\ee^{-2\pi\ii\pair{\gamma-\ffv}{\mu-\ffv}}\) do not depend on the elements
of \(\dlat\) chosen to represent \(\gamma-\ffv,\ \mu-\ffv\) to evaluate the
pairings. Thus (up to the phase factor of \(S\)) the modular transformation formulae depend on \(\ffv\) (which
is also the datum that characterises the \rgv{} structures, that is the
dualising object and twist) but not on the choice of representative \(\rffv\).
The above theorem is most easily proved by decomposing the characters in a
manner compatible with the decomposition of \(\dlat\) in \cref{thm:hbasis}.\ref{itm:dlatdecomprefined}.

\begin{lem}
  Given \(\gamma\in\dlat\) and the decomposition \(\dlat = V \oplus \rspn\nlat
  \oplus F \oplus D\) of \cref{thm:hbasis}.\ref{itm:dlatdecomprefined},
  let \(\gamma_V, \gamma_\circ, \gamma_F,\gamma_D\) be the components of the
  representative \(s(\gamma)=\gamma_V+ \gamma_\circ+ \gamma_F+\gamma_D\) in
  the summands of \(\dlat\) and let
  \(\rffv=\rffv_V+\rffv_\circ+\rffv_F+\rffv_D\) be the analogous decomposition
  for \(\rffv\).
  The character $\chi^{\rffv}_{\gamma}$, $\gamma \in \qlat$ admits the following
  factorisation with contributions from $V$, $\rspn{\nlat} \oplus F$, $D$.
  \begin{equation}
    \chi^{\rffv}_{\gamma}\brac*{\zeta,\tau} =\chi^{\rffv, V}_{\gamma}\brac*{\zeta,\tau}
    \cdot \chi^{\rffv, \circ}_{\gamma}\brac*{\zeta,\tau}\cdot \chi^{\rffv, D}_{\gamma}\brac*{\zeta,\tau}
  \end{equation}
  where
  \begin{align}
    \chi^{\rffv, V}_{\gamma}\brac*{\zeta,\tau}&= \frac{\ee^{2 \pi \ii \pair{\zeta_V}{\gamma_V}}
                                                q^{\frac{1}{2} \norm{\gamma_V -
                                                \rffv_V}^2}}{\eta(\tau)^{\dim V}} , \nonumber\\
    \chi^{\rffv, \circ}_{\gamma}\brac*{\zeta,\tau}&= \frac{\ee^{2 \pi \ii \pair{\zeta_\circ}{\gamma_F}} q^{\pair{\rffv_\circ}{\rffv_F - \gamma_F}}}{\eta(\tau)^{2 \rk \nlat}} \sum_{f \in F} \ee^{2 \pi \ii \pair{\gamma_\circ}{f}} \delta_F \brac*{ \zeta_F + \tau (\gamma_F - \rffv_F) - f} , \nonumber\\
    \chi^{\rffv,D}_{\gamma}\brac*{\zeta,\tau} &=   \sum_{\kappa \in \lat \cap D} \frac{\ee^{2 \pi \ii \pair{\zeta_D}{\kappa + \gamma_D}} q^{\frac{1}{2} \norm{\gamma_D +\kappa- \rffv_D}^2}}{\eta(\tau)^{\rk D}} ,
  \end{align}
  and where $\delta_F(x)$, is the lattice $\delta$-distribution on $F$, that is,
  \begin{equation}
    \delta_F (x) = \prod_{j=1}^{\rk \nlat} \delta \brac*{\pair{a_j}{x}}, \quad
    x\in \rspn{F},\ \set{a_j} \ \text{any } \ZZ\text{-basis of} \ \nlat .
  \end{equation}
  \label{thm:chardecomp}
\end{lem}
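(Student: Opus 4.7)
The plan is to substitute the decompositions $s(\gamma)=\gamma_V+\gamma_\circ+\gamma_F+\gamma_D$, $\rffv=\rffv_V+\rffv_\circ+\rffv_F+\rffv_D$ and $\zeta=\zeta_V+\zeta_\circ+\zeta_F+\zeta_D$ into the character formula \eqref{eq:charform}, to split the sum over $\lat$ via a direct sum $\lat=\nlat\oplus(\lat\cap D)$, and to use the orthogonality properties of \cref{thm:hbasis}.\ref{itm:dlatdecomprefined} to separate the resulting expression into three independent pieces. The direct sum $\lat=\nlat\oplus(\lat\cap D)$ follows from the construction of \cref{thm:hbasis}: on the one hand $\lat=\nlat\oplus\lat^c$ with $\lat^c\subset D$, and conversely any element of $\lat\cap D$ lies in $D$, which is orthogonal to $\plat\supset\nlat$, so non-degeneracy of $\pair{-}{-}$ on $D$ forces its $\nlat$-component to vanish. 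Writing $\lambda=\nu+\kappa$ with $\nu\in\nlat$ and $\kappa\in\lat\cap D$, the rank identity $\dim\hvec=\dim V+2\rk\nlat+\rk D$ ensures that the $\eta$-prefactors split as $\eta(\tau)^{\dim V}\cdot\eta(\tau)^{2\rk\nlat}\cdot\eta(\tau)^{\rk D}$.

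Next I would expand both bilinear expressions in the character. The four summands $V$, $\rspn\nlat$, $F$ and $D$ are mutually orthogonal except for the non-degenerate pairing between $\rspn\nlat$ and $F$, while $\pair{-}{-}$ restricts trivially to $\rspn\nlat$ and to $F$ individually. Consequently $\pair{s(\gamma)+\lambda-\rffv}{s(\gamma)+\lambda-\rffv}$ reduces to $\norm{\gamma_V-\rffv_V}^2+2\pair{\gamma_\circ+\nu-\rffv_\circ}{\gamma_F-\rffv_F}+\norm{\gamma_D+\kappa-\rffv_D}^2$, and $\pair{\zeta}{s(\gamma)+\lambda}$ splits analogously into contributions $\pair{\zeta_V}{\gamma_V}$, $\pair{\zeta_\circ}{\gamma_F}$, $\pair{\zeta_F}{\gamma_\circ+\nu}$ and $\pair{\zeta_D}{\gamma_D+\kappa}$. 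The $V$- and $D$-contributions factor off immediately as $\chi^{\rffv,V}_\gamma$ and $\chi^{\rffv,D}_\gamma$; what remains is the $\nu$-independent prefactor $\ee^{2\pi\ii\pair{\zeta_\circ}{\gamma_F}}\ee^{2\pi\ii\pair{\zeta_F}{\gamma_\circ}}q^{\pair{\gamma_\circ-\rffv_\circ}{\gamma_F-\rffv_F}}/\eta(\tau)^{2\rk\nlat}$ multiplying $\sum_{\nu\in\nlat}\ee^{2\pi\ii\pair{\zeta_F+\tau(\gamma_F-\rffv_F)}{\nu}}$.

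The final step, which I expect to be the main bookkeeping hurdle, is Poisson summation on $\nlat$. With the dual $\ZZ$-bases of $\nlat$ and $F$ chosen in \cref{thm:hbasis}.\ref{itm:dlatdecomprefined}, the $\ZZ$-dual of $\nlat$ inside $\rspn F$ is precisely $F$ and the covolume is $1$, so the distributional identity $\sum_{\nu\in\nlat}\ee^{2\pi\ii\pair{x}{\nu}}=\sum_{f\in F}\delta_F(x-f)$ holds with $\delta_F$ as defined in the statement. Setting $x=\zeta_F+\tau(\gamma_F-\rffv_F)$ and then using the support of each delta to substitute $\zeta_F=f-\tau(\gamma_F-\rffv_F)$ into the leftover prefactor, the exponential $\ee^{2\pi\ii\pair{\zeta_F}{\gamma_\circ}}$ becomes $\ee^{2\pi\ii\pair{f}{\gamma_\circ}}q^{-\pair{\gamma_F-\rffv_F}{\gamma_\circ}}$, and combining this with $q^{\pair{\gamma_\circ-\rffv_\circ}{\gamma_F-\rffv_F}}$ collapses the total $q$-exponent to $\pair{\rffv_\circ}{\rffv_F-\gamma_F}$, producing exactly $\chi^{\rffv,\circ}_\gamma$. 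The only non-trivial input beyond the decomposition of \cref{thm:hbasis} is the Poisson identity and the clean tracking of $q$-exponents after the delta substitution.
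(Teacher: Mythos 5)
Your proposal is correct and follows essentially the same route as the paper's proof: split the character using the orthogonal decomposition of \cref{thm:hbasis}.\ref{itm:dlatdecomprefined} and the corresponding splitting $\lat = \nlat\oplus(\lat\cap D)$, factor out the $V$- and $D$-pieces immediately, and for the $\nlat$--$F$ piece apply the Dirac-comb / Poisson identity $\sum_{\nu\in\nlat}\ee^{2\pi\ii\pair{x}{\nu}}=\sum_{f\in F}\delta_F(x-f)$ followed by the delta-supported substitution $\zeta_F = f - \tau(\gamma_F-\rffv_F)$ to collapse the $q$-exponent to $\pair{\rffv_\circ}{\rffv_F-\gamma_F}$. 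Your write-up is slightly more explicit than the paper's (you justify $\lat\cap D = \lat^c$, the $\eta$-power bookkeeping via $\dim\hvec=\dim V + 2\rk\nlat + \rk D$, and the unimodularity of the $\nlat$--$F$ pairing underlying the Poisson identity), but the underlying computation is the same.
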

\begin{proof}
  Then result follows by direct calculation and using the fact the the three subgroups \(V,\ \rspn\nlat
  \oplus F,\ D\) are mutually orthogonal.
  The only complication is the $\nlat$, $F$ contribution, which we sketch here. 
  Recall that $\nlat$ and $F$ are orthogonal to themselves but pair
  crosswise. Let $\set{a_i}_{i=1}^{\dim \nlat}$ be a $\ZZ$-basis of
  $\nlat$ and let \(\set{a^i}_{i=1}^{\dim \nlat}\) be its dual in \(F\).
  \begin{align}
    \sum_{\kappa \in \nlat} & \frac{\ee^{2 \pi \ii \pair{\zeta_\circ + \zeta_F}{\gamma_\circ + \gamma_F + \kappa}} q^{\frac{1}{2} \norm{\gamma_\circ + \gamma_F + \kappa - \rffv_{\circ} - \rffv_F}^2}}{\eta(\tau)^{2 \rk \dlat}} = \frac{\ee^{2 \pi \ii \pair{\zeta_\circ}{\gamma_F}} \ee^{2 \pi \ii \pair{\zeta_F}{\gamma_\circ}}}{\eta (\tau)^{2 \rk \dlat}} q^{\pair{\gamma_\circ - \rffv_{\circ}}{\gamma_F - \rffv_F}} \sum_{\kappa \in \nlat} {\ee^{2 \pi \ii \pair{\zeta_F+ \tau(\gamma_F - \rffv)}{\kappa}}} \nonumber\\
    &= \frac{\ee^{2 \pi \ii \pair{\zeta_\circ}{\gamma_F}} \ee^{2 \pi i
      \pair{\zeta_F}{\gamma_\circ}}}{\eta (\tau)^{2 \rk \dlat}}
      q^{\pair{\gamma_\circ - \rffv_{\circ}}{\gamma_F - \rffv_F}}
      \sum_{\substack{n_i \in \ZZ\\1\le i\le \rk\nlat}} \ee^{ 2 \pi \ii n_i \pair{\zeta_F
      + \tau(\gamma_F - \rffv)}{a_i}} \nonumber\\
    &= \frac{\ee^{2 \pi i
      \pair{\zeta_\circ}{\gamma_F}} \ee^{2 \pi i
      \pair{\zeta_F}{\gamma_\circ}}}{\eta
      (\tau)^{2 \rk \dlat}} q^{\pair{\gamma_\circ
      - \rffv_{\circ}}{\gamma_F - \rffv_F}}
      \sum_{\substack{n_i \in \ZZ\\1\le i\le \rk\nlat}}
      \delta\brac*{\pair{\zeta_F + \tau(\gamma_F
      - \rffv)}{a_i} -n_i}\nonumber\\
    &= \frac{\ee^{2 \pi i
      \pair{\zeta_\circ}{\gamma_F}} \ee^{2 \pi i
      \pair{\zeta_F}{\gamma_\circ}}}{\eta
      (\tau)^{2 \rk \dlat}} q^{\pair{\gamma_\circ
      - \rffv_{\circ}}{\gamma_F - \rffv_F}} \sum_{\substack{n_i \in \ZZ\\1\le i\le \rk\nlat}}\delta_F \brac{\zeta_F + \tau (\gamma_F- \rffv_F) -  n_i a^i}
      \nonumber\\
    &=\frac{\ee^{2 \pi i
      \pair{\zeta_\circ}{\gamma_F}} \ee^{2 \pi i
      \pair{\zeta_F}{\gamma_\circ}}}{\eta
      (\tau)^{2 \rk \dlat}} q^{\pair{\gamma_\circ
      - \rffv_{\circ}}{\gamma_F - \rffv_F}}
      \sum_{f \in F}\delta_F
      \brac{\zeta_F + \tau (\gamma_F- \rffv_F)- f}\nonumber\\
    &=\frac{\ee^{2 \pi \ii \pair{\zeta_\circ}{\gamma_F}} q^{\pair{\rffv_\circ}{\rffv_F - \gamma_F}}}{\eta(\tau)^{2 \rk \nlat}} \sum_{f \in F} \ee^{2 \pi \ii \pair{\gamma_\circ}{f}} \delta_F \brac*{ \zeta_F + \tau (\gamma_F - \rffv_F) - f},
  \end{align}
  where the third identity follows from the Dirac comb identity
  \begin{equation}
    \sum_{k\in \ZZ}\delta\brac*{x-k} = \sum_{n\in\ZZ}\ee^{2\pi\ii nx},
  \end{equation}
  and the final identity is obtained by using the \(\delta\) distributions to
  substitute
  \(\tau(\gamma_F-\rffv_F)=f-\zeta_F\) in the exponent of \(q\).
\end{proof}
\begin{proof}[Proof of \cref{thm:moddat}]
  The $T$-transformation expression is immediate. The $S$-transformation
  formulae follow from computing the \(S\)-transformations of the three
  factors in \cref{thm:chardecomp}. All three cases boil down to repeated
  evaluation of Gaussian integrals, that is, the well known identity
  \begin{equation}
    \int^{\RR^n}\ee^{-\frac12 \sum_{i,j=1}^n x_i A_{i,j} x_j+\sum_{i=1}^n
      B_ix_i}\dd x_1\cdots \dd x_n=\sqrt{\frac{\brac*{2\pi}^n}{\det
        A}}\ee^{\frac{1}{2}B^T A^{-1}B},
    \label{eq:gint}
  \end{equation}
  where \(A\) is a symmetric positive matrix and \(B\) is a real \(n\)-vector.

 We first determine the \(S\) transformation of the \(V\) part, \(\chi^{\rffv,V}_{\gamma}\). Fix an
 \(\RR\)-basis \(\set{e_i}_{i=1}^{\dim V}\) of \(V\) and consider
  \begin{align}
    \int_{V} \ee^{-2\pi \ii \pair{\gamma_V - \rffv_V}{v - \rffv_V}}\chi^{\rffv,
    V}_{v}\brac*{\zeta,\tau}\dd(v)
    &=\int_{V} \ee^{-2\pi \ii \pair{\gamma_V - \rffv_V}{v - \rffv_V}}
    \frac{\ee^{2\pi \ii \pair{v}{\zeta_V}}q^{\frac{1}{2} \norm{v -
                                                 \rffv_V}^2}}{\eta(\tau)^{\dim V}} \dd(v)\nonumber\\
    &= \frac{\ee^{\pi \ii \tau \norm{\rffv_V}^2} \ee^{2 \pi \ii \pair{\rffv_V}{\gamma_V - \rffv_V}}}{\eta(\tau)^{\dim V}}\sqrt{\det\pair{-}{-}\vert_V} \int_{\RR^{\dim V}} \ee^{\pi \ii \tau \pair{v}{v}} \ee^{2 \pi \ii \pair{v}{\zeta_V - \gamma_V + \rffv_V - \tau \rffv_V}}  \dd v_1\cdots \dd v_n
    \nonumber\\ &= \frac{\ee^{\pi \ii \tau \norm{\rffv_V}^2} \ee^{2 \pi \ii \pair{\rffv_V}{\gamma_V - \rffv_V}}}{\eta(\tau)^{\dim V} {\sqrt{- 2\pi \ii \tau}}^{\dim V}} \sqrt{\det\pair{-}{-}\vert_V}\int_{\RR^{\dim V}} \ee^{- \frac{1}{2} \pair{\nu}{\nu}} \ee^{\frac{2 \pi i}{\sqrt{- 2 \pi \ii \tau}}\pair{\nu}{\zeta_V - \gamma_V + \rffv_V - \tau \rffv_V}} d \nu
                  \nonumber\\
    &=\ee^{2 \pi \ii \pair{\zeta_V/\tau}{\rffv_V \tau -
                  \zeta_V/2 - \rffv_V}}
      \frac{\ee^{2
                  \pi \ii \pair{\zeta_V/\tau}{\gamma_V}} \ee^{-\pi i
                  \norm{\gamma_V - \rffv_V}^2 /\tau}}{\sqrt{\ii \tau}^{\dim V}
                  \eta(\tau)^{\dim V}}\nonumber\\
    &=\ee^{-2\pi \ii \brac*{\frac{\norm{\zeta_v}^2}{2\tau} + \frac{\pair{\rffv_v}{\zeta_V}}{\tau} - \pair{\rffv_V}{\zeta_V}}}\chi^{\rffv, V}_{\gamma}\brac*{\frac{\zeta}{\tau},\frac{-1}{\tau}},
  \end{align}
  where in the third equality we have rescaled all integration variables to
  absorb a factor of  \(\sqrt{-\ii2\pi\tau}\) and in the fourth equality we have used the Gaussian
  integral formula \eqref{eq:gint} with \(A\) equal to the Gram matrix of \(\pair{-}{-}\vert_V\).

  The \(D\) part, \(\chi^{\rffv,D}_{\gamma}\), is a lattice $\theta$-function
  (for the lattice \(D\cap\lat\)). The modular transformation properties of
  lattice \(\theta\)-functions are well known. In particular, the
  $S$-transformation can be determined by combining the Gaussian integral
  formula \eqref{eq:gint} with the Dirac comb, that is, the
  \(\delta\)-distribution identity
  \begin{equation}
    \sum_{k\in \ZZ}\delta\brac*{x-k} = \sum_{n\in\ZZ}\ee^{2\pi\ii nx}.
  \end{equation}
  Note that \(\sqrt{\det\pair{-}{-}\vert_D}=\sqrt{|D/D\cap\lat|^{-1}}\) and consider
  \begin{align}
    \chi^{\rffv,D}_{\gamma}\brac*{\frac{\zeta}{\tau},\frac{-1}{\tau}}&=\sum_{\kappa\in
    \lat\cap
    D}\frac{\ee^{2\pi\ii\pair{\zeta/\tau}{\gamma_D+\kappa}}\ee^{2\pi\ii\frac{-1}{2\tau}\pair{\gamma_D-\rffv_D+\kappa}{\gamma_D-\rffv_D+\kappa}}}{\eta(\tau)^{\rk
                                                                       D}}\nonumber\\
    &=\sum_{\kappa\in\lat\cap
      D}\ee^{2\pi\ii\brac*{\frac{\norm{\zeta_D}^2}{2\tau}+\frac{\pair{\rffv_D}{\zeta_D}}{\tau}-\pair{\rffv_D}{\zeta_D}}}\sqrt{\det\pair{-}{-}\vert_D}
      \int_{\rspn D}\hspace{-9mm}
      \ee^{-2\pi\ii\pair{\gamma_D-\rffv_D+\kappa}{\nu-\rffv_D}}\frac{\ee^{2\pi\ii\pair{\zeta_D}{\nu}}\ee^{2\pi\ii\frac{\tau}{2}\pair{\nu-\rffv_D}{\nu-\rffv_D}}}{\eta(\tau)^{\rk
      D}}\dd(\nu)\nonumber\\
    &=\frac{\ee^{2\pi\ii\brac*{\frac{\norm{\zeta_D}^2}{2\tau}+\frac{\pair{\rffv_D}{\zeta_D}}{\tau}-\pair{\rffv_D}{\zeta_D}}}}{\sqrt{|D/D\cap\lat|}}
      \int_{\rspn D}\hspace{-9mm}
      \ee^{-2\pi\ii\pair{\gamma_D-\rffv_D}{\nu-\rffv_D}}\frac{\ee^{2\pi\ii\pair{\zeta_D}{\nu}}\ee^{2\pi\ii\frac{\tau}{2}\pair{\nu-\rffv_D}{\nu-\rffv_D}}}{\eta(\tau)^{\rk
      D}}\hspace{-3mm}\sum_{\kappa\in\lat\cap
      D}\hspace{-2mm}\ee^{-2\pi\ii\pair{\kappa}{\nu-\rffv_D}}\dd(\nu)\nonumber\\
    &=\frac{\ee^{2\pi\ii\brac*{\frac{\norm{\zeta_D}^2}{2\tau}+\frac{\pair{\rffv_D}{\zeta_D}}{\tau}-\pair{\rffv_D}{\zeta_D}}}}{\sqrt{|D/D\cap\lat|}}
      \int_{\rspn D}\hspace{-9mm}
      \ee^{-2\pi\ii\pair{\gamma_D-\rffv_D}{\nu-\rffv_D}}\frac{\ee^{2\pi\ii\pair{\zeta_D}{\nu}}\ee^{2\pi\ii\frac{\tau}{2}\pair{\nu-\rffv_D}{\nu-\rffv_D}}}{\eta(\tau)^{\rk
      D}}\hspace{-3mm}\sum_{k\in D}\hspace{-2mm}\delta_D(\nu-k)\dd(\nu)\nonumber\\
    &=\frac{\ee^{2\pi\ii\brac*{\frac{\norm{\zeta_D}^2}{2\tau}+\frac{\pair{\rffv_D}{\zeta_D}}{\tau}-\pair{\rffv_D}{\zeta_D}}}}{\sqrt{|D/D\cap\lat|}}
      \sum_{k\in D}\ee^{-2\pi\ii\pair{\gamma_D-\rffv_D}{\nu-\rffv_D}}\frac{\ee^{2\pi\ii\pair{\zeta_D}{k}}\ee^{2\pi\ii\frac{\tau}{2}\pair{k-\rffv_D}{k-\rffv_D}}}{\eta(\tau)^{\rk
      D}}\nonumber\\
    &=\frac{\ee^{2\pi\ii\brac*{\frac{\norm{\zeta_D}^2}{2\tau}+\frac{\pair{\rffv_D}{\zeta_D}}{\tau}-\pair{\rffv_D}{\zeta_D}}}}{\sqrt{|D/D\cap\lat|}}
      \sum_{t\in D/\lat\cap D}\ee^{-2\pi\ii\pair{\gamma_D-\rffv}{t-\rffv}}
      \sum_{\kappa\in t}
      \frac{\ee^{\pair{\zeta_D}{\kappa}}\ee^{2\pi\ii\frac{\tau}{2}\norm{\kappa-\rffv_D}^2}}{\eta(\tau)^{\rk
      D}}\nonumber\\
    &=\frac{\ee^{2\pi\ii\brac*{\frac{\norm{\zeta_D}^2}{2\tau}+\frac{\pair{\rffv_D}{\zeta_D}}{\tau}-\pair{\rffv_D}{\zeta_D}}}}{\sqrt{|D/D\cap\lat|}}
      \sum_{t\in D/\lat\cap D}\ee^{-2\pi\ii\pair{\gamma_D-\rffv}{t-\rffv}}\chi^{\rffv,D}_{t}\brac*{\zeta,\tau}
  \end{align}

  Finally, we consider the $\nlat$, $F$ part, \(\chi^{\rffv,
    \circ}_{\gamma}\). Note first that, for \(\gamma_F\in F\) and \(\gamma_\circ\in \rspn\nlat\)
  \begin{align}
    \chi^{\rffv, \circ}_{\gamma_F+\gamma_\circ}\brac*{\frac{\zeta}{\tau},\frac{-1}{\tau}}&=
    \frac{\ee^{ 2\pi \ii \pair{\zeta_\circ/\tau}{\gamma_F}} \ee^{-2\pi \ii \pair{\rffv_\circ}{\rffv_F - \gamma_F} /\tau}}{\eta(\tau)^{2\rk \nlat} \sqrt{-i \tau}^{2\rk \nlat}} \sum_{f \in F}  \ee^{2 \pi \ii \pair{\gamma_\circ}{f}} \delta_F (\zeta_F / \tau - (\gamma_F - \rffv_F)/\tau - f)\nonumber \\ &= \frac{\abs{\tau}^{\rk \nlat}}{(-i\tau)^{\rk \nlat}} \frac{\ee^{2 \pi \ii \pair{\zeta_\circ / \tau}{\gamma_F}}}{\eta(\tau)^{2 \rk \nlat}} \ee^{-2 \pi \ii \pair{\rffv_\circ}{\rffv_F - \gamma_F}/\tau} \sum_{f \in F} \ee^{2 \pi \ii \pair{\gamma_\circ}{f}} \delta_F (\zeta_F + \rffv_F - \gamma_F - \tau f),
  \end{align}
  where the second equality follows from the scaling behaviour of \(\delta\)-distributions.
  Let $\set{a_i}_{i=1}^{\dim \nlat}$ be a $\ZZ$-basis of
  $\nlat\cap\lat$ and let \(\set{a^i}_{i=1}^{\dim \nlat}\) be its dual in
  \(F\). Then compare the above to
  \begin{align}
    \sum_{k\in F}&
      \int_{\rspn{\nlat}/\nlat} \ee^{- 2\pi \ii \pair{\gamma_F+\gamma_\circ -
                   \rffv_F-\rffv_\circ}{k+\nu -
                   \rffv_F-\rffv_\circ}}\chi^{\rffv,
                   \circ}_{k+\nu}(\zeta,\tau)\dd(\nu)\nonumber\\
     &=
     \sum_{k\in F} \int_{[0,1]^{\rk \nlat}}
    \ee^{- 2\pi \ii \pair{\gamma_F + \gamma_\circ - \rffv_{F} -
    \rffv_\circ}{k+\nu - \rffv_{F} - \rffv_\circ}} \frac{\ee^{2 \pi \ii
    \pair{\zeta_\circ}{k}}q^{\pair{\rffv_{\circ}}{\rffv_F-k}}}{\eta(\tau)^{2 \rk \nlat}} 
      \sum_{f \in F} \ee^{2 \pi \ii \pair{\nu_\circ}{f}} \delta_F(\zeta_F +
        \tau(k - \rffv_F) - f) \dd\nu_1\cdots\dd\nu_{\rk\nlat}\nonumber\\
                 &=
     \sum_{k\in F} 
    \ee^{- 2\pi \ii \pair{\gamma_F + \gamma_\circ - \rffv_{F} -
    \rffv_\circ}{k - \rffv_{F} - \rffv_\circ}} \frac{\ee^{2 \pi \ii
    \pair{\zeta_\circ}{k}}q^{\pair{\rffv_{\circ}}{\rffv_F -
    k}}}{\eta(\tau)^{2 \rk \nlat}} 
      \sum_{f \in F}  \delta_F(\zeta_F +
        \tau(k - \rffv_F) - f) \int_{[0,1]^{\rk \nlat}}\ee^{2 \pi \ii
        \pair{\nu}{\rffv_F-\gamma_F+f}}\dd\nu_1\cdots\dd\nu_{\rk\nlat}\nonumber\\
                 &=
    \sum_{k\in F} 
    \ee^{- 2\pi \ii \pair{\gamma_F + \gamma_\circ - \rffv_{F} -
    \rffv_\circ}{k - \rffv_{F} - \rffv_\circ}} \frac{\ee^{2 \pi \ii
    \pair{\zeta_\circ}{k}}q^{\pair{\rffv_{\circ}}{\rffv_F -
    k}}}{\eta(\tau)^{2 \rk \nlat}}\delta_F(\zeta_F +
                   \tau(k - \rffv_F) + \rffv_F-\gamma_F)\nonumber\\
                 &=
    \sum_{k\in F} 
    \ee^{- 2\pi \ii \pair{\gamma_F + \gamma_\circ - \rffv_{F} -
    \rffv_\circ}{-k - \rffv_\circ}} \frac{\ee^{2 \pi \ii
    \pair{\zeta_\circ}{\rffv_F-k}}q^{\pair{\rffv_{\circ}}{k}}}{\eta(\tau)^{2 \rk \nlat}}\delta_F(\zeta_F +
                   \rffv_F-\gamma_F - \tau k)\nonumber\\
                 &=
     \ee^{2\pi\ii
                   \pair{\zeta_F+\zeta_\circ}{\rffv_f+\rffv_\circ}}\ee^{-2\pi\ii\brac*{\frac{\norm{\zeta_F+\zeta_\circ}^2}{2\tau}+\frac{\pair{\zeta_F+\zeta_\circ}{\rffv_F+\rffv_\circ}}{\tau}}}
                   \frac{\ee^{\frac{2\pi\ii}{\tau}\pair{\gamma_F}{\zeta_\circ}}\ee^{\frac{2\pi\ii}{\tau}\pair{\rffv_\circ}{\gamma_F-\rffv_F}}}{\eta(\tau)^{2\rk
                   \nlat}}\sum_{k\in
                   F}\ee^{2\pi\ii\pair{\gamma_\circ}{k}}\delta_F\brac*{\zeta_F+\rffv_F-\gamma_F-\tau
                   k}\nonumber\\
    &=
     \ee^{2\pi\ii
      \pair{\zeta_F+\zeta_\circ}{\rffv_F+\rffv_\circ}}\ee^{-2\pi\ii\brac*{\frac{\norm{\zeta_F+\zeta_\circ}^2}{2\tau}+\frac{\pair{\zeta_F+\zeta_\circ}{\rffv_F+\rffv_\circ}}{\tau}}}
      \brac*{\frac{-\ii\tau}{|\tau|}}^{\rk\nlat}\chi^{\rffv, \circ}_{\gamma_F+\gamma_\circ}\brac*{\frac{\zeta}{\tau},\frac{-1}{\tau}}
  \end{align}
\end{proof}

\begin{rmk}
  Armed with the above \(S\)-transformations, we can now propose a Verlinde
  formula following the standard module formalism of \cite{RidSL208} by
  setting
  \begin{equation}
    N_{\lambda,\mu}^\rho=\int_{\qlat}
    \frac{S_{\lambda,\kappa}S_{\mu,\kappa}\overline{S_{\rho,\kappa}}}{S_{0,\kappa}}\dd \kappa,\qquad \lambda,\mu,\rho\in\qlat
  \end{equation}
  and asking, if
  \begin{equation}
    \LFock{\lambda}\fuse\LFock{\mu}\cong\int_{\qlat} N_{\lambda,\mu}^\rho \LFock{\rho}\dd \rho
  \end{equation}
  holds. Indeed a quick calculation reveals that
  \begin{align}
    N_{\lambda,\mu}^\rho&=\det\pair{-}{-}\vert_D
                          \int_{\qlat}\ee^{- 2 \pi \ii \pair{\lambda+\mu-\rho}{\kappa-\rffv}}\dd{\kappa}
                          =\int_{\qlat}\ee^{- 2 \pi \ii \pair{\lambda+\mu-\rho}{\kappa}}\dd{\kappa}\nonumber\\
                        &=\delta(\lambda_V+\mu_V-\rho_v)\delta_{\lambda_F+\mu_F,\rho_F}\brac*{\sum_{\phi\in
                          \nlat}\delta_\nlat(\lambda_\circ+\mu_\circ-\rho_\circ-\phi)}\delta_{\lambda_D+\mu_D=\rho_D},
  \end{align}
  which are of course precisely the fusion multiplicities of
  \begin{equation}
    \LFock{\lambda}\fuse \LFock{\mu}= \LFock{\lambda+\mu}.
  \end{equation}
\end{rmk}

\vspace{20mm}
%

\end{document}